\documentclass[a4paper, 10pt, twoside, notitlepage]{amsart}

\usepackage{amsmath,amscd}
\usepackage{amssymb}
\usepackage{amsthm}
\usepackage{comment}
\usepackage{graphicx, xcolor}

\usepackage{mathrsfs}
\usepackage[ocgcolorlinks, linkcolor=blue]{hyperref}

%\usepackage{showkeys}
%\usepackage{refcheck}
%\usepackage[displaymath, mathlines]{lineno}
%\linenumbers

\usepackage{bm}
\usepackage{bbm}
\usepackage{url}

\usepackage[utf8]{inputenc}
\usepackage{mathtools,amssymb}
\usepackage{esint}
\usepackage{tikz}
\usepackage{dsfont}
\usepackage{relsize}
\usepackage{url}
\urlstyle{same}
\usepackage{xcolor}
\usepackage{graphicx}
\usepackage{mathrsfs}
\usepackage[shortlabels]{enumitem}
\usepackage{lineno}
\usepackage{amsmath}
\usepackage{enumitem}
\usepackage{amsthm} 
\usepackage{verbatim}
\usepackage{dsfont}
\numberwithin{equation}{section}

\allowdisplaybreaks

\mathtoolsset{showonlyrefs}

\graphicspath{{images/}}

\newtheorem{theorem}{Theorem}[section]
\newtheorem{lemma}[theorem]{Lemma}
\newtheorem{definition}[theorem]{Definition}
\newtheorem{corollary}[theorem]{Corollary}

\newtheorem{proposition}[theorem]{Proposition}
\newtheorem{question}{Question}
\newtheorem{remark}[theorem]{Remark}

\title[Global uniqueness for nonlocal diffusion]{The Calder\'on problem for a nonlocal diffusion equation with time-dependent coefficients}%{Inverse problems for a nonlocal diffusion equation: Uniqueness and non-uniqueness}
%\keywords{Fractional Laplacian, fractional gradient, Calderón problem, conductivity equation, Liouville reduction, nonlocal Neumann derivative, Runge approximation}
%\subjclass[2020]{Primary 35R30; secondary 26A33, 42B37}
% 35R30  	Inverse problems for PDEs
% 26A33  	Fractional derivatives and integrals
% 42B37  	Harmonic analysis and PDEs

\author[Y.-H. Lin]{Yi-Hsuan Lin}
\address{Department of Applied Mathematics, National Yang Ming Chiao Tung University, Hsinchu, Taiwan}
\email{yihsuanlin3@gmail.com}

\author[J. Railo]{Jesse Railo}
\address{Department of Pure Mathematics and Mathematical Statistics, University of
	Cambridge, Cambridge CB3 0WB, UK}
\email{jr891@cam.ac.uk}

\author[P. Zimmermann]{Philipp Zimmermann}
\address{Department of Mathematics, ETH Zurich, Z\"urich, Switzerland}
\email{philipp.zimmermann@math.ethz.ch}
%\date{\today}

\newcommand{\R}{{\mathbb R}}

\newcommand{\N}{{\mathbb N}}

\newcommand {\p} {\partial}

\newcommand{\LC}{\left(}
\newcommand{\RC}{\right)}
\newcommand{\wt}{\widetilde}

%Kelvin transform
%Riesz potential
%X-ray transform

%normal operator
%normal operator of d-plane transform

\newcommand{\schwartz}{\mathscr{S}}

\newcommand{\tempered}{\mathscr{S}^{\prime}}

\newcommand{\fourier}{\mathcal{F}}
\newcommand{\ifourier}{\mathcal{F}^{-1}}
\newcommand{\vev}[1]{\left\langle#1\right\rangle}

%compactly supported smooth function
%smooth function
%compactly supported distribution
\newcommand{\distr}{\mathscr{D}^{\prime}}%distribution
%dimension
 %kernel of the Riesz potential
\newcommand{\norm}[1]{\lVert #1 \rVert}
\newcommand{\abs}[1]{\left\lvert #1 \right\rvert}%absolute value
%norm
%inner product or duality pairing
%support
%convex hull
\DeclareMathOperator{\Div}{div} %divergence
\DeclareMathOperator{\supp}{supp} %support
 %distance
 %distance

%Radon measures
\newcommand{\weak}{\rightharpoonup}%weak convergence 
\newcommand{\weakstar}{\overset{\ast}{\rightharpoonup}}%weak star convergence 

\begin{document}

	\maketitle
	\begin{abstract}
	We investigate global uniqueness for an inverse problem for a nonlocal diffusion equation on domains that are bounded in one direction. The coefficients are assumed to be unknown and isotropic on the entire space. We first show that the partial exterior \emph{Dirichlet-to-Neumann map} locally determines the diffusion coefficient in the exterior domain. In addition, we introduce a novel analysis of \emph{nonlocal Neumann derivatives} to prove an interior determination result. Interior and exterior determination yield the desired global uniqueness theorem for the Calderón problem of nonlocal diffusion equations with time-dependent coefficients. This work extends recent studies from nonlocal elliptic equations with global coefficients to their parabolic counterparts. The results hold for any spatial dimension $n\geq 1$.
		
		\medskip
		
		\noindent{\bf Keywords.} Fractional Laplacian, fractional gradient, Calderón problem, conductivity equation, Liouville reduction, nonlocal Neumann derivative, Runge approximation.
		
		\noindent{\bf Mathematics Subject Classification (2020)}: Primary 35R30; secondary 26A33, 42B37

	\end{abstract}

	\tableofcontents

	\section{Introduction}\label{sec: introduction}

	The research of inverse problems has become an active field in applied
	mathematics since Calder\'on published his pioneering work ``On
	an inverse boundary value problem'' \cite{calderon2006inverse}. Calderón asked the following question: ``Can one determine
	the electrical conductivity of a medium by using boundary measurements of voltage and current?'' This problem is referred as the \emph{Calder\'on problem} in the literature. The mathematical setup is to consider a bounded domain $\Omega\subset\mathbb{R}^{n}$ with sufficiently regular boundary $\p \Omega$, representing a conducting medium, and a positive function $\gamma(x)>0$ on $\Omega$ which is its a priori unknown conductivity. It is known that sufficiently regular conductivities are uniquelly determined by
	the information of current and voltage measurements on the boundary. In other words, $\gamma$ can be recovered when the Cauchy data 
	$\left\{u|_{\partial\Omega},\left. \gamma\frac{\partial u}{\partial\nu}\right|_{\partial\Omega}\right\}$ is given,
	where $u$ solves the conductivity equation 
	\begin{align}\label{eq:conductivity equation}
		\Div\left(\gamma\nabla u\right)=0 \text{ in }\Omega.
	\end{align}
	The Calder\'on problem was first solved by \cite{SU87} in space
	dimension $n\geq3$, where the authors demonstrated the fact that the conductivity
	can be determined uniquely by the \emph{Dirichlet-to-Neumann map} (DN map, $u|_{\partial\Omega}\mapsto\gamma\frac{\partial u}{\partial\nu}$)
	of the conductivity equation \eqref{eq:conductivity equation}. After some years, the same result has been showed in space dimension $n=2$ in \cite{Nachman1996GlobalUniqueness} and later for conductivities which are only uniformly elliptic \cite{AP06}.

	Recently, the studies of Calder\'on type inverse problems have been considered for nonlocal operators as well. A prototypical example is the inverse exterior value problem for the fractional Schr\"odinger operator $(-\Delta)^{s}+q(x)$ which was first introduced and solved in \cite{GSU20}. The main tool in solving this Calder\'on problem is based on a suitable \emph{unique continuation property} (UCP) and the closely related \emph{Runge approximation}. By applying similar ideas, one can solve several challenging problems which some still stay open in the corresponding local cases. This shows that nonlocal inverse problems take advantages from the \emph{nonlocality} of the underlying operators. For further details we refer to \cite{bhattacharyya2021inverse,CMR20,CMRU20,C20,GLX,CL2019determining,CLL2017simultaneously,CRZ2022global,cekic2020calderon,feizmohammadi2021fractional,harrach2017nonlocal-monotonicity,harrach2020monotonicity,GRSU18,GU2021calder,ghosh2021non,lin2020monotonicity,LL2020inverse,LL2022inverse,LLR2019calder,LLU2022calder,KLW2021calder,RS17,ruland2018exponential,RZ2022FracCondCounter,RZ2022LowReg,RZ2022unboundedFracCald} and the references therein. We emphasize that most of these works consider nonlocal inverse problems in which one wants to recover lower order coefficients. On the other hand, in the articles \cite{C20,GU2021calder,LLU2022calder,RZ2022LowReg,RZ2022unboundedFracCald,RZ2022FracCondCounter} the authors study nonlocal inverse problems where one is interested in determining leading order coefficients and hence they can be seen as full nonlocal analogies of the classical Calder\'on problem.
	
	\subsection{Mathematical modeling and main results}
	Let $\Omega\subset\R^n$ be an open set bounded in one direction for any $n\in \N$, and consider the initial exterior value problem of the \emph{variable coefficient nonlocal diffusion equation}
	\begin{align}\label{main eq nonlocal diff}
		\begin{cases}
			\partial_tu +\Div_s(\Theta_{\gamma}\nabla^s u)=0 & \text{ in }\Omega_T,\\
			u=f & \text{ in }(\Omega_e)_T,\\
			u(x,0)=0 &\text{ in }\Omega,
		\end{cases}
	\end{align}
	where  $\Omega_e\vcentcolon =\R^n\setminus\overline{\Omega}$ denotes the exterior of $\Omega$, $0<T<\infty$ and $0<s<\min(1,n/2)$. Throughout this work, let us assume $\gamma\in L^\infty(\R^n_T)$ is a uniformly elliptic conductivity, i.e., there exists a constant $\gamma_0>0$ such that
	\begin{align}\label{ellipticity} 
		0<\gamma_0\leq \gamma(x,t)\leq \gamma_0^{-1} \text{ for } (x,t)\in \R^n_T.
	\end{align} 
	In addition, let us denote $\Theta_{\gamma}(x,y,t)\vcentcolon =\gamma^{1/2}(x,t)\gamma^{1/2}(y,t)\mathbf{1}_{n\times n}$ for $x,y\in\R^n$ to be the conductivity matrix.
	Moreover, we always use the notation
	\[
	A_T \vcentcolon = A \times (0,T)
	\]
	to denote the space time cylinders, where $A\subset \R^n$ can be any set.

	In this work, we are interested in the determination of the conductivity $\gamma(x,t)$ in $\R^n_T$ for the nonlocal diffusion equation \eqref{main eq nonlocal diff}. Assuming the well-posedness of \eqref{main eq nonlocal diff} at moment (the proof will be given in Section \ref{sec: forward problem}), we can define the DN map via
	\begin{align}\label{DN map in intro}
		\begin{split}
			\langle \Lambda_{\gamma}f,g\rangle\vcentcolon
			=&\frac{C_{n,s}}{2}\int_0^T\int_{\R^{2n}}\gamma^{1/2}(x,t)\gamma^{1/2}(y,t)\\
			&\qquad \qquad \quad \cdot \frac{(u_f(x,t)-u_f(y,t))(g(x,t)-g(y,t))}{|x-y|^{n+2s}}\,dxdydt,
		\end{split}
	\end{align}
	for all $f,g\in C_c^{\infty}((\Omega_e)_T)$, where 
	\begin{align}\label{C_ns}
	    C_{n,s}:=\frac{4^s \Gamma(n/2+s)}{\pi^{n/2}|\Gamma(-s)|}
	\end{align}
	is a constant and $u_f$ is the unique solution of \eqref{main eq nonlocal diff}. 
	More precisely, we ask the following question:
	\begin{question}
		\label{quest}
		If we have given conductivities $\gamma_1$, $\gamma_2$ in a suitable function space such that $\Lambda_{\gamma_1}f|_{(W_2)_T}=\Lambda_{\gamma_2}f|_{(W_2)_T}$ for all $f\in C_c^{\infty}((W_1)_T)$, where $W_1,W_2\subset \Omega_e$ are given nonempty open sets, does there hold $\gamma_1=\gamma_2$ in $\R^n_T$?
	\end{question}
	In the limiting case $s=1$, this problem and its generalizations has been studied, for example, in \cite{canuto2001determining} or \cite{AliParCald}.  In these works, the authors determine the coefficients for heat equations for any spatial dimension $n\geq 2$ by using the corresponding boundary measurements, where they allow an additional uniformly elliptic coefficient $\rho$ in front of the time derivative. On the other hand, in the works \cite{GlobalTimeFrac,Time-Frac-Diff}, the inverse problem for the diffusion equation with fractional time derivative has been studied.
	
	Next, let $u_j$ be the solution of 
	\begin{align}\label{main eq nonlocal diff j=12}
		\begin{cases}
			\partial_tu_j +\Div_s(\Theta_{\gamma_j}\nabla^s u_j)=0 & \text{ in }\Omega_T,\\
			u_j=f & \text{ in }(\Omega_e)_T,\\
			u_j(x,0)=0 &\text{ in }\Omega,
		\end{cases}
	\end{align}
	and denote the exterior DN map of \eqref{main eq nonlocal diff j=12} by $\Lambda_{\gamma_j}$, for $j=1,2$. 
	Our first theorem shows that the exterior DN maps have a unique continuation property. The proof is based on a spacetime Liouville reduction, which reduces the problem to a diffusion Schr\"odinger type inverse problem. By applying the Runge approximation property for certain equations, we can prove the uniqueness of the conductivity $\gamma$. The argument however requires the Alessandrini identity as well as the use of the UCP of the fractional Laplacian twice, once in $H^s$ and once in $H^{2s,\frac{n}{2s}}$, the latter using a general UCP result in \cite{KRZ2022Biharm}. See Theorem \ref{thm:fractionalLiouvilleReduction} for a further information why earlier approaches that work well in the elliptic case lead into additional challenges in the studied parabolic case.
	
	\begin{theorem}[Global uniqueness]\label{Theorem: General formulation}
		Let $\Omega\subset \R^n$ be an open set bounded in one direction, $0<T<\infty$, $0<s<\min(1,n/2)$, $\gamma_0>0$ and $W\subset\Omega_e$ be an open set. Assume that $\gamma_j \in \Gamma_{s,\gamma_0}(\R^n_T)\cap C^{\infty}(W_T)$\footnote{The set $\Gamma_{s,\gamma_0}(\R^n_T)$ is defined by \eqref{Gamma s gamma0} in Section \ref{sec: forward problem}.} for $j=1,2$. Then 
		\begin{align}\label{same DN in thm 1}
			\left. \Lambda_{\gamma_1}f \right|_{W_T}=\left. \Lambda_{\gamma_2}f \right|_{W_T}, \text{ for any }f\in C^\infty_c(W_T),
		\end{align}
		implies that $\gamma_1=\gamma_2$ in $\R^n_T$.
	\end{theorem}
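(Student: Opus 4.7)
The plan is to deduce the global equality $\gamma_1=\gamma_2$ on $\R^n_T$ by combining two independent pieces: an \emph{exterior} determination establishing $\gamma_1=\gamma_2$ on $W_T$ (and, by varying $W$, on the portion of $(\Omega_e)_T$ where the coefficients are smooth), followed by an \emph{interior} determination upgrading this equality to $\Omega_T$. The hypothesis \eqref{same DN in thm 1} feeds into both steps: directly into the first, and—once exterior agreement is known—into the second via the nonlocal Neumann derivative formalism announced in the abstract.

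For the exterior step, I would use the $C^\infty$-regularity of $\gamma_j$ on $W_T$ to perform a spacetime Liouville reduction: setting $v_j\vcentcolon=\gamma_j^{1/2}u_j$ should turn the nonlocal conductivity equation into a nonlocal parabolic Schr\"odinger-type equation $\partial_t v_j+(-\Delta)^s v_j+q_{\gamma_j}v_j=0$, with a potential $q_{\gamma_j}$ expressible in terms of $\gamma_j$. Combining the equality of the DN maps with a Runge approximation property for the reduced equation and the associated Alessandrini identity should produce an integral identity of the form $\int (q_{\gamma_1}-q_{\gamma_2})\,v_1\,\tilde v_2\,dx\,dt=0$ ranging over a dense family of solution pairs. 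Two applications of UCP for the fractional Laplacian—first in $H^s$ to localize, then in $H^{2s,n/(2s)}$ to cope with the limited spatial regularity of $q_{\gamma_j}$ via the biharmonic-type UCP the authors invoke—would yield $q_{\gamma_1}=q_{\gamma_2}$, hence $\gamma_1=\gamma_2$ on $W_T$.

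For the interior step, once $\gamma_1=\gamma_2$ has been pinned down on $W_T$, I would invoke the nonlocal Neumann derivative analysis: express the nonlocal flux across $W_T$ as a singular integral in $\gamma_j$ and $u_j$, use a second (interior) Runge approximation to span a dense family of test states, and extract an integral identity $\int_{\Omega_T}(\gamma_1-\gamma_2)G\,dx\,dt=0$ for enough $G$ to force $\gamma_1=\gamma_2$ on $\Omega_T$. Assembling the interior and exterior equalities then gives the global conclusion on $\R^n_T$. The main obstacle throughout is the parabolic structure: the potential $q_{\gamma_j}$ is only $L^\infty_t$, so a single UCP is insufficient, the Alessandrini identity has to be justified in function spaces that respect the time variable, and the Runge approximation must be built from scratch for the nonlocal diffusion equation with vanishing initial trace and exterior data in $C_c^\infty$—all of which seriously complicate the transplant from the nonlocal elliptic theory and motivate the two-stage architecture of the argument.
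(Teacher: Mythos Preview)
Your two-stage architecture (exterior first, then interior) matches the paper, but the tools you assign to each stage are swapped, and this creates a genuine gap. The Liouville reduction, Alessandrini-type integral identity, and Runge approximation all live over $\Omega_T$: the reduced Schr\"odinger equation holds in $\Omega_T$, the integral identity reads $\int_{\Omega_T}(Q_{\gamma_1}-Q_{\gamma_2})v_1\tilde v_2\,dx\,dt=0$, and Runge gives density in $L^2(0,T;\widetilde H^s(\Omega))$. None of this touches $W_T$, so your proposed exterior step cannot deliver $\gamma_1=\gamma_2$ on $W_T$. Worse, the passage from $\Lambda_{\gamma_j}$ to the Schr\"odinger DN data (and hence to the Alessandrini identity) already \emph{requires} $\gamma_1=\gamma_2$ on the measurement set as a hypothesis, so your exterior argument is circular. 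The paper instead obtains exterior determination by an elementary energy-concentration argument: special test functions $\Phi_N=\eta(t)\phi_N(x)$ with $\supp\phi_N\to\{x_0\}$ and $\|\phi_N\|_{H^s}=1$ satisfy $\langle\Lambda_\gamma\Phi_N,\Phi_N\rangle\to\int_0^T\eta^2(t)\gamma(x_0,t)\,dt$, which recovers $\gamma$ on $W_T$ directly from the DN map without any Liouville reduction, Runge, or UCP.

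There is a second gap at the end. Even granting $\gamma_1=\gamma_2$ on $W_T$ and on $\Omega_T$, you have not covered $(\Omega_e\setminus W)_T$, so ``assembling the interior and exterior equalities'' does not give the global conclusion. The paper closes this by extracting from the interior step not only $\gamma_1=\gamma_2$ in $\Omega_T$ but also $Q_{\gamma_1}=Q_{\gamma_2}$ in $\Omega_T$; together these give $(-\Delta)^s(m_{\gamma_1}-m_{\gamma_2})=0$ and $m_{\gamma_1}-m_{\gamma_2}=0$ in $\Omega$ for a.e.\ $t$, and then the UCP in $H^{2s,n/(2s)}$ forces $m_{\gamma_1}=m_{\gamma_2}$ on all of $\R^n$. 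So the UCP you mention is not used for the exterior step at all---it is the final global step, applied on $\Omega$ rather than on $W$.
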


	In order to prove Theorem \ref{Theorem: General formulation}, we first need to establish an \emph{exterior determination} result. This extends the results in \cite{CRZ2022global,RZ2022LowReg} for elliptic equations and is based on a construction of special solutions to the equation \eqref{main eq nonlocal diff} whose energies can be concentrated near a fixed point in the spacetime. Furthermore, to our best knowledge, Theorem \ref{Theorem: General formulation} is the first result to recover time-dependent coefficients in the nonlocal setup.

	\begin{theorem}[Exterior determination]\label{Theorem: Exterior determination}
		Let $\Omega\subset \R^n$ be an open set bounded in one direction, $0<T<\infty$, $0<s<\min(1,n/2)$, $\gamma_0>0$ and $W\subset\Omega_e$ be an open set. Assume that $\gamma_j \in \Gamma_{s,\gamma_0}(\R^n_T)$ for $j=1,2$. Then \eqref{same DN in thm 1} implies that $\gamma_1=\gamma_2$ a.e. in $W_T$.
	\end{theorem}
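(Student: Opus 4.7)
The plan is to adapt the localized-test-function strategy employed for elliptic nonlocal conductivity problems in \cite{CRZ2022global,RZ2022LowReg} to the present parabolic setting. Fix a common Lebesgue point $(x_0,t_0)\in W_T$ of $\gamma_1$ and $\gamma_2$ with $t_0\in(0,T)$; since $W\subset\Omega_e$ is open, one has $d_0:=\dist(x_0,\R^n\setminus W)>0$, and in particular $\dist(x_0,\overline{\Omega})\geq d_0$. Pick a nonzero $\phi\in C_c^\infty(B_1(0)\times(-1,1))$ and, for small $\varepsilon>0$, introduce the concentrating test function
\[
f_\varepsilon(x,t):=\varepsilon^{-(n+1-2s)/2}\,\phi\LC \tfrac{x-x_0}{\varepsilon},\,\tfrac{t-t_0}{\varepsilon}\RC,
\]
which lies in $C_c^\infty(W_T)$ for $\varepsilon$ sufficiently small. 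The exponent is calibrated so that, after rescaling, the natural \emph{diagonal} energies
\[
I_j(\varepsilon):=\int_0^T\!\!\int_W\!\!\int_W \gamma_j^{1/2}(x,t)\gamma_j^{1/2}(y,t)\,\frac{(f_\varepsilon(x,t)-f_\varepsilon(y,t))^2}{|x-y|^{n+2s}}\,dx\,dy\,dt
\]
remain $O(1)$ as $\varepsilon\to 0$, while $\|f_\varepsilon\|_{L^2(\R^n_T)}^2=O(\varepsilon^{2s})$.

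Using that $u_j|_{(\Omega_e)_T}=f_\varepsilon$ for $j=1,2$ and that $f_\varepsilon$ vanishes outside $W_T$, the integrand in the definition \eqref{DN map in intro} vanishes identically whenever both spatial variables lie in $\Omega_e\setminus W$ or in $\Omega$. Testing against $g=f=f_\varepsilon$ therefore yields
\[
\langle (\Lambda_{\gamma_1}-\Lambda_{\gamma_2})f_\varepsilon,f_\varepsilon\rangle = \frac{C_{n,s}}{2}\LC I_1(\varepsilon)-I_2(\varepsilon) \RC + R(\varepsilon),
\]
where $R(\varepsilon)$ collects the off-diagonal contributions from $W\times(\Omega_e\setminus W)$, $W\times\Omega$, and their symmetric counterparts. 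A change of variables together with Lebesgue's differentiation theorem applied to the product $\gamma_j^{1/2}(x,t)\gamma_j^{1/2}(y,t)$ produces
\[
\lim_{\varepsilon\to 0^+}\LC I_1(\varepsilon)-I_2(\varepsilon)\RC = c(\phi,n,s)\LC\gamma_1(x_0,t_0)-\gamma_2(x_0,t_0)\RC,
\]
with
\[
c(\phi,n,s)=\int_{-1}^1\!\!\int_{\R^{2n}}\frac{(\phi(x,t)-\phi(y,t))^2}{|x-y|^{n+2s}}\,dx\,dy\,dt>0,
\]
a constant independent of the conductivities.

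The main obstacle is to verify that $R(\varepsilon)\to 0$. The crucial point is that, whenever one of the integration variables lies outside $W$, it sits at distance at least $d_0/2$ from the spatial support of $f_\varepsilon$ as soon as $\varepsilon<d_0/2$, so the kernel $|x-y|^{-n-2s}$ is uniformly bounded on the relevant region. Combined with the ellipticity bound $\gamma_0\leq\gamma_j\leq\gamma_0^{-1}$, this controls the $W\times(\Omega_e\setminus W)$ contribution by $Cd_0^{-2s}\|f_\varepsilon\|_{L^2(\R^n_T)}^2=O(\varepsilon^{2s})$. The $W\times\Omega$ terms additionally involve the unknown interior values $u_j|_{\Omega_T}$, but Cauchy--Schwarz together with the parabolic energy estimate $\|u_j\|_{L^2(0,T;H^s(\R^n))}\leq C$ provided by the well-posedness theory of Section \ref{sec: forward problem} and the $L^2$-smallness $\|f_\varepsilon\|_{L^2}=O(\varepsilon^s)$ yields a bound of order $O(\varepsilon^s)$. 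Combining these three estimates with the hypothesis $\langle(\Lambda_{\gamma_1}-\Lambda_{\gamma_2})f_\varepsilon,f_\varepsilon\rangle=0$ and sending $\varepsilon\to 0^+$ forces $\gamma_1(x_0,t_0)=\gamma_2(x_0,t_0)$; since almost every point of $W_T$ is a common Lebesgue point of $\gamma_1$ and $\gamma_2$, the desired conclusion $\gamma_1=\gamma_2$ a.e.\ in $W_T$ follows.
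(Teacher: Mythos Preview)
Your argument is essentially correct and reaches the same conclusion, but it is organised differently from the paper's proof, and one step is phrased in a way that does not quite match the hypotheses.

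\medskip

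\noindent\textbf{One inaccuracy.} You frame the limit of the diagonal terms $I_j(\varepsilon)$ via Lebesgue points and the Lebesgue differentiation theorem. After your change of variables the rescaled integrand involves $\gamma_j^{1/2}(x_0+\varepsilon\tilde x,t_0+\varepsilon\tilde t)\,\gamma_j^{1/2}(x_0+\varepsilon\tilde y,t_0+\varepsilon\tilde t)$ against a weight that is \emph{not} compactly supported in $\tilde y$; this is not a standard average over a shrinking ball, so Lebesgue differentiation alone does not justify the limit for merely bounded measurable coefficients. What saves you is that $\gamma_j\in\Gamma_{s,\gamma_0}(\R^n_T)\subset C^1_tC_x(\R^n_T)$ is \emph{continuous}, so pointwise convergence plus the integrable majorant $\gamma_0^{-1}\frac{(\phi(\tilde x,\tilde t)-\phi(\tilde y,\tilde t))^2}{|\tilde x-\tilde y|^{n+2s}}$ gives the limit by dominated convergence. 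Replace ``Lebesgue point / Lebesgue differentiation'' by continuity and dominated convergence and the step is clean.

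\medskip

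\noindent\textbf{Comparison with the paper.} The paper builds product-form test functions $\Phi_N=\eta(t)\phi_N(x)$, concentrating only in space (the $\phi_N$ come from \cite[Lemma~5.5]{CRZ2022global} with $\|\phi_N\|_{H^s}=1$, $\|\phi_N\|_{L^2}\to 0$), and then decomposes $\langle\Lambda_\gamma\Phi_N,\Phi_N\rangle=\int_0^TB_\gamma(u_N-\Phi_N,\Phi_N)+\int_0^TB_\gamma(\Phi_N,\Phi_N)$. The cross term is killed by Lemma~\ref{lemma: elliptic/parabolic estimate}, and the remaining pure-data term converges to $\int_0^T\eta^2(t)\gamma(x_0,t)\,dt$; varying $\eta$ then gives the identity for every $t$. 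You instead rescale jointly in space and time, landing directly on the single point value $\gamma(x_0,t_0)$, and you decompose the double integral by region ($W\times W$ versus the rest) rather than by splitting $u_N$ into $(u_N-\Phi_N)+\Phi_N$. Both routes ultimately hinge on the same parabolic energy estimate (Lemma~\ref{lemma: elliptic/parabolic estimate}). Your version is a bit more direct (no separate ``vary $\eta$'' step), while the paper's version cleanly recycles the elliptic test-function construction and the full-space bilinear form, avoiding the explicit region-by-region bookkeeping.
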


	\noindent\textbf{Ideas of the proof.} 
	Let us briefly summarize the ideas of the proof of Theorem \ref{Theorem: General formulation}. We first prove the exterior uniqueness by using the exterior information from \eqref{same DN in thm 1} such that $\gamma_1 =\gamma_2$ in $W_T$. Next, consider arbitrary nonempty disjoint open subsets $W_1,W_2\subset W\subset \Omega_e$, then Theorem \ref{Theorem: Exterior determination} implies that $\gamma_1=\gamma_2$ in $(W_1\cup W_2)_T$. We next introduce the \emph{nonlocal Neumann derivative}
	\begin{align}
		\mathcal{N}_{\gamma_j}u(x,t)=C_{n,s} \int_{\Omega} \gamma_j^{1/2}(x,t)\gamma_j^{1/2}(y,t) \frac{u(x,t)-u(y,t)}{|x-y|^{n+2s}} \, dy, \quad (x,t)\in (\Omega_e)_T,
	\end{align}
	for $j=1,2$, where $C_{n,s}$ is the constant given by \eqref{C_ns}. In particular, we can prove that 
	\begin{align}\label{same nonlocal neumann}
		\langle \mathcal{N}_{\gamma_1}f,g \rangle=\langle \mathcal{N}_{\gamma_2}f,g \rangle, \text{ for any }f\in C^\infty_c((W_1)_T) \text{ and } g\in C^\infty_c((W_2)_T),
	\end{align} 
	whenever \eqref{same DN in thm 1} holds (see Lemma \ref{Lemma: old DN implies new DN}).

	Meanwhile, we introduce the spacetime Liouville reduction, which transfer the nonlocal diffusion equation \eqref{main eq nonlocal diff} into a Schr\"odinger type equation (see \eqref{eq: weak solutions of Schrodinger equation existence}). By utilizing the identity \eqref{same nonlocal neumann} and the Liouville reduction, we can derive a suitable integral identity (see Section \ref{sec: integral id}). Now, applying the Runge approximation (Proposition \ref{Prop: runge}), we can prove the interior uniqueness $\gamma_1 =\gamma_2$ in $\Omega_T$ and $(-\Delta)^s(\gamma_1^{1/2}-\gamma_2^{1/2})=0$ in $\Omega_T$. Finally, using the UCP we can conclude the proof. We want to emphasize again that our theorems hold for any spatial dimension $n\in \N$.

	\subsection{Organization of the article} We first recall preliminaries related to function spaces and nonlocal operators in Section \ref{sec:preliminaries}. In Section \ref{sec: forward problem}, we show well-posedness of the forward problem \eqref{main eq nonlocal diff} and define the exterior DN maps. We prove the exterior determination by using \eqref{same DN in thm 1} in Section \ref{sec: exterior det}. In Section \ref{sec: DN and Liouville}, we introduce the spacetime Liouville reduction, which transfer the equation \eqref{main eq nonlocal diff} into a Schr\"odinger type diffusion equation \eqref{eq: weak solutions of Schrodinger equation existence}. We also study the well-posedness for the reduced equation. In Section \ref{sec: new DN maps}, we introduce a nonlocal Neumann derivatives for both equations \eqref{main eq nonlocal diff} and \eqref{eq: weak solutions of Schrodinger equation existence}. Finally, we prove the global uniqueness in Section \ref{sec: interior det} by deriving suitable integral identities and an approximation property. In Appendix \ref{sec: Discussion of nonlocal normal derivatives and DN maps}, we discuss and explain several connections between DN maps and nonlocal Neumann derivatives.

	\section{Preliminaries}\label{sec:preliminaries}
	
	Throughout this article the space dimension $n$ is a fixed positive integer and $\Omega \subset \R^n$ is an open set. In this section, we introduce fundamental properties of function spaces and operators which will be used in our study. 
	
	\subsection{Fractional Sobolev spaces}
	\label{sec: Fractional Sobolev spaces}
	
	We denote by $\schwartz(\R^n)$ and $\tempered(\R^n)$ Schwartz functions and tempered distributions respectively. We define the Fourier transform $\fourier\colon \schwartz(\R^n)\to \schwartz(\R^n)$ by
	\begin{equation}
		\fourier f(\xi) \vcentcolon = \int_{\R^n} f(x)e^{-\mathrm{i}x \cdot \xi} \,dx,
	\end{equation}
	which is occasionally also denoted by $\hat{f}$, and $\mathrm{i}=\sqrt{-1}$. By duality it can be extended to the space of tempered distributions and will again be denoted by $\fourier u = \hat{u}$, where $u \in \tempered(\R^n)$, and we denote the inverse Fourier transform by $\ifourier$. Next recall that the fractional Laplacian of order $a\geq 0$ as a Fourier multiplier
	\begin{equation}\label{eq:fracLapFourDef}
		(-\Delta)^{a/2} u = \ifourier\LC \abs{\xi}^{a}\hat{u}(\xi)\RC,
	\end{equation}
	for $u \in \tempered(\R^n)$ whenever the right hand side is well-defined. Given $a\geq 0$, the $L^2$-based fractional Sobolev space $H^{a}(\R^{n})\vcentcolon =W^{a,2}(\mathbb{R}^{n})$ is  given by  
	\begin{equation}\notag
		\|u\|^2_{H^{a}(\mathbb{R}^{n})}= \|u\|_{L^{2}(\mathbb{R}^{n})}^{2}+\|(-\Delta)^{a/2}u\|_{L^{2}(\mathbb{R}^{n})}^{2}.
	\end{equation}
	In addition, the Parseval identity implies that the seminorm $\|(-\Delta)^{a/2}u\|_{L^{2}(\mathbb{R}^{n})}$
	can be expressed as 
	\[
	\|(-\Delta)^{a/2}u\|_{L^{2}(\mathbb{R}^{n})}=\langle (-\Delta)^{a}u,u\rangle_{L^2(\mathbb{R}^{n})}^{1/2}.
	\]
	By duality one extends the spaces $H^a(\R^n)$ to the range $a<0$.
	If $\Omega\subset \R^n$ is an open set and $a\in\mathbb{R}$,
	then the fractional Sobolev spaces are defined by  
	\begin{align*}
		H^{a}(\Omega) & \vcentcolon =\left\{\,u|_{\Omega}\,;\, u\in H^{a}(\mathbb{R}^{n})\right\},\\
		\widetilde{H}^{a}(\Omega) & \vcentcolon =\text{closure of \ensuremath{C_{c}^{\infty}(\Omega)} in \ensuremath{H^{a}(\mathbb{R}^{n})}}.
		%H_{0}^{a}(U) & :=\text{closure of \ensuremath{C_{c}^{\infty}(U)} in \ensuremath{H^{a}(U)}},
	\end{align*}
	%	and 
	%	\[
	%	H_{\overline{U}}^{a}:=\left\{u\in H^{a}(\mathbb{R}^{n}):\,\mathrm{supp}(u)\subset\overline{U}\right\}.
	%	\]
	Meanwhile, $H^{a}(\Omega)$ is a Banach space with respect to the quotient norm
	\[
	\|u\|_{H^{a}(\Omega)}\vcentcolon =\inf\left\{ \|U\|_{H^{a}(\mathbb{R}^{n})}\,;\,U\in H^{a}(\mathbb{R}^{n})\mbox{ and }U|_{\Omega}=u\right\} .
	\]
	%	Moreover, when $U$ is a Lipschitz domain, the dual space of $H^\gamma(U)$ can be written as 
	%	\begin{align*}
		%		(H^{a}_{\overline{U}}(\R^n))^\ast = H^{-a}(U), \quad \text{ and }\quad \LC H^{a}(U)\RC^\ast=H^{-a}_{\overline U}(\R^n).
		%	\end{align*}
	%	The space $ (\wt H^{a}(U))^\ast$ denotes the dual space of $\wt H^{a}(U)$.  We refer the readers to \cite{di2012hitchhiks,mclean2000strongly} for more details about the fractional Sobolev spaces.
	
	\subsection{Bessel potential spaces}
	
	Next, we introduce the Bessel potential spaces $H^{s,p}(\R^n)$ and two local variants of them, namely $\widetilde{H}^{s,p}(\Omega)$ and $H^{s,p}(\Omega)$. The Bessel potential of order $s \in \R$ is the Fourier multiplier $\vev{D}^s\colon \tempered(\R^n) \to \tempered(\R^n)$ given by
	\begin{equation}
		\label{eq: Bessel pot}
		\vev{D}^s u \vcentcolon = \ifourier(\vev{\xi}^s\hat{u}),
	\end{equation}
	where $\langle \xi\rangle=\sqrt{1+|\xi|^2}$ is the Japanese bracket. Now for any $1 \leq p < \infty$ and $s \in \R$ the Bessel potential spaces $H^{s,p}(\R^n)$ are defined by 
	\begin{equation}\label{eq: Bessel pot spaces}
		H^{s,p}(\R^n) \vcentcolon = \left\{\, u \in \tempered(\R^n)\,;\, \vev{D}^su \in L^p(\R^n)\,\right\}
	\end{equation}
	and they are equipped with the norm $\norm{u}_{H^{s,p}(\R^n)} \vcentcolon = \norm{\vev{D}^su}_{L^p(\R^n)}$. The local Bessel potential spaces $\widetilde{H}^{s,p}(\Omega)$ are now defined as the closure of $C_c^{\infty}(\Omega)$ in $H^{s,p}(\R^n)$ and endowed with the norm inherited from $H^{s,p}(\R^n)$. Moreover, we denote by $H^{s,p}(\Omega)$ the space of restrictions from elements in $H^{s,p}(\R^n)$ to $\Omega$ and endow it with the related quotient norm
	\[
	\norm{u}_{H^{s,p}(\Omega)} \vcentcolon = \inf\left\{\,\norm{U}_{H^{s,p}(\R^n)}\,;\, U \in H^{s,p}(\R^n), U|_\Omega = u\,\right\}.
	\]
	We have that $(\widetilde{H}^{s,p}(\Omega))^* = H^{-s,p'}(\Omega)$ and 
	$\widetilde{H}^{s,p}(\Omega) = (H^{-s,p'}(\Omega))^*$ for every $1 < p < \infty$ and $s \in \R$. As usual, when $p=2$, then we drop the index $p$ in the above notations and see that they are isomorphic to the spaces introduced in Section~\ref{sec: Fractional Sobolev spaces}.
	\begin{comment}
		\begin{definition} 
			Let $\Omega, F \subset \R^n$ be open and closed sets respectively, $1 \leq p \leq \infty$ and $s \in \R$. We define the following \emph{local Bessel potential spaces}:
			\begin{equation}\begin{split}\label{eq: local bessel pot spaces}
					H^{s,p}(\Omega) &\vcentcolon = \left\{\,u|_{\Omega} \,;\, u \in H^{s,p}(\R^n)\,\right\},\\
					\widetilde{H}^{s,p}(\Omega) &\vcentcolon = \overline{C_c^\infty(\Omega)}^{H^{s,p}(\R^n)},\\
					H_0^{s,p}(\Omega) &\vcentcolon =\overline{C_c^\infty(\Omega)}^{H^{s,p}(\Omega)},\\
					H_F^{s,p}(\R^n) &\vcentcolon =\{\,u \in H^{s,p}(\R^n)\,;\, \supp(u) \subset F\,\}.
				\end{split}
			\end{equation}
			The space $H^{s,p}(\Omega)$ is equipped with the quotient norm
			\begin{equation}
				\label{eq: norm local space}
				\norm{u}_{H^{s,p}(\Omega)} \vcentcolon = \inf\{\,\norm{w}_{H^{s,p}(\R^n)}\,;\, w \in H^{s,p}(\R^n), w|_\Omega = v\,\}.
			\end{equation}
			The spaces $\widetilde{H}^{s,p}(\Omega)$ and $H_F^{s,p}(\R^n)$ are equipped with the $H^{s,p}(\R^n)$ norm. The space $H_0^{s,p}(\Omega)$ is equipped with the $H^{s,p}(\Omega)$ norm. As usual we set $H^s \vcentcolon = H^{s,2}$.
		\end{definition}
	\end{comment}

	\subsection{Some properties of nonlocal operators}
	
	It is known that the fractional Laplacian induces a bounded linear map $(-\Delta)^{s/2}\colon H^{t,p}(\R^n) \to H^{t-s,p}(\R^n)$ for every $1 \leq p < \infty$, $s \geq0$ and $t \in \R$. Next, we introduce a special class of unbounded open sets which have a fractional Poincar\'e inequality:
	
	\begin{definition} \label{def:bounded1dir}
		\begin{enumerate}[(i)]
			\item\label{item 1 bounded one dir} We say that an open set $\Omega_\infty \subset\R^n$ of the form $\Omega_\infty=\R^{n-k}\times \omega$, where $n\geq k>0$ and $\omega \subset \R^k$ is a bounded open set, is a \emph{cylindrical domain}.
			\item\label{item 2 bounded one dir} We say that an open set $\Omega \subset \R^n$ is \emph{bounded in one direction} if there exists a cylindrical domain $\Omega_\infty \subset \R^n$ and a rigid Euclidean motion $A(x) = Lx + x_0$, where $L$ is a linear isometry and $x_0 \in \R^n$, such that
			$\Omega \subset A\Omega_\infty$.
		\end{enumerate}
	\end{definition}
	
	\begin{proposition}[{Poincar\'e inequality (cf.~\cite[Theorem~2.2]{RZ2022unboundedFracCald})}]\label{thm:PoincUnboundedDoms} Let $\Omega\subset\R^n$ be an open set that is bounded in one direction. Suppose that $2 \leq p < \infty$ and $0\leq s\leq t < \infty$, or $1 < p < 2$, $1 \leq t < \infty$ and $0 \leq s \leq t$. Then there exists $C(n,p,s,t,\Omega)>0$ such that
		\begin{equation}
			\label{eq: poincare on L1}
			\|(-\Delta)^{s/2}u\|_{L^p(\R^n)}\leq C\|(-\Delta)^{t/2}u\|_{L^p(\R^n)}
		\end{equation}
		for all $u\in \widetilde{H}^{t,p}(\Omega)$.
	\end{proposition}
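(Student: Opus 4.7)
My plan is to first reduce the statement to a cylindrical domain using the isometry invariance of the fractional Laplacian, then prove the core inequality $\|u\|_{L^p(\R^n)}\le C\|(-\Delta)^{t/2}u\|_{L^p(\R^n)}$ for $u\in C_c^\infty(\Omega_\infty)$, and finally upgrade the $s=0$ case to general $s$ by a Littlewood--Paley decomposition.

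\textbf{Reduction to the cylindrical case.} Because $L^p$-norms are invariant under rigid Euclidean motions and the Fourier symbol $|\xi|^\sigma$ of $(-\Delta)^{\sigma/2}$ is rotation invariant and commutes with translations, replacing $u$ by $u\circ A^{-1}$ preserves both sides of the inequality. Combined with the zero-extension inclusion $\widetilde H^{t,p}(\Omega)\hookrightarrow \widetilde H^{t,p}(A\Omega_\infty)$, it suffices to prove the estimate for $\Omega=\Omega_\infty=\R^{n-k}\times\omega$, and by density for $u\in C_c^\infty(\Omega_\infty)$.

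\textbf{Core estimate.} For $p=2$ I would take a partial Fourier transform in the unbounded variable $x'\in\R^{n-k}$; for each frequency $\xi'$ the slice $\widehat u(\xi',\cdot)$ is compactly supported in $\omega$, so a one-dimensional fractional Dirichlet Poincar\'e inequality on $\omega$ combined with Plancherel delivers the claim. For general $p$ with $t\ge 1$, I would write $(-\Delta)^{1/2}=\sum_j R_j\partial_j$ via Riesz transforms and apply the classical $L^p$-Poincar\'e inequality in a bounded coordinate direction (immediate from $u(x)=\int_a^{x_j}\partial_j u\,dt$ and H\"older, since $\omega$ is bounded in some $j\in\{n-k+1,\ldots,n\}$) together with the $L^p$-boundedness of the $R_j$; this forces the hypothesis $t\ge 1$ in the range $1<p<2$. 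For $p\ge 2$ and $0<t<1$, the sub-unit derivative can be recovered via a nonlocal Hardy-type estimate that exploits both the singular integral representation of $(-\Delta)^{t/2}$ and the containment of the support of $u$ in a slab.

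\textbf{From the base case to general $s$.} I split $\widehat u = \phi\widehat u + (1-\phi)\widehat u$ for a smooth radial cutoff with $\phi\equiv 1$ on $\{|\xi|\le 1\}$ and $\phi\equiv 0$ on $\{|\xi|\ge 2\}$. The high-frequency piece $u_{\mathrm{high}}$ satisfies $\|(-\Delta)^{s/2}u_{\mathrm{high}}\|_{L^p}\le C\|(-\Delta)^{t/2}u\|_{L^p}$ because the multiplier $|\xi|^{s-t}(1-\phi(\xi))$ satisfies the Mikhlin condition when $s\le t$. The low-frequency piece is band-limited, so a Bernstein-type inequality combined with $\|u_{\mathrm{low}}\|_{L^p}\le C\|u\|_{L^p}$ (via Young on the Schwartz convolution kernel $\mathcal F^{-1}\phi$) gives $\|(-\Delta)^{s/2}u_{\mathrm{low}}\|_{L^p}\le C\|u\|_{L^p}$, and the core inequality closes the loop. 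The main obstacle in this plan is the case $1<p<2$ with $t<1$: the only candidate interpolation is through the Riesz potential $I_{t-s}$, which fails to be bounded $L^p\to L^p$, so one genuinely needs $t\ge 1$ to recover a first-order derivative that can be inverted by the $L^p$-bounded Riesz transforms.
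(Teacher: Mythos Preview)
The paper does not prove this proposition at all: it is quoted verbatim from \cite[Theorem~2.2]{RZ2022unboundedFracCald} and used as a black box (for the coercivity estimate in Lemma~\ref{lem: continuity} and the energy bounds in Theorem~\ref{thm: well-posedness of forward problem}). So there is no ``paper's own proof'' to compare against; you are supplying an argument where the authors simply cite one.

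On the merits of your sketch: the reduction to a cylinder via rigid motions and the frequency splitting to pass from the base case $s=0$ to general $s\le t$ are both sound. The $p=2$ case via partial Fourier transform and the $t\ge 1$ case via the identity $\partial_j=R_j(-\Delta)^{1/2}$ together with the classical slab Poincar\'e are also fine, and these already cover the full range $1<p<2$, $t\ge 1$ claimed in the statement. The genuine gap is the regime $2\le p<\infty$, $0<t<1$: your one-line reference to ``a nonlocal Hardy-type estimate that exploits the singular integral representation and the slab support'' is not an argument. This is exactly the hard case --- there is no first-order derivative to recover with Riesz transforms, and the $p=2$ spectral trick does not transfer --- and it is where the actual work in the cited reference lies (roughly, one interpolates or uses a Besov-space embedding that is specific to $p\ge 2$). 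If you want a self-contained proof you must fill this in; otherwise, citing \cite{RZ2022unboundedFracCald} as the paper does is the honest route.
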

	
	For the rest of this article we fix $s\in(0,1)$. The fractional gradient of order $s$ is the bounded linear operator $\nabla^s\colon H^s(\R^n)\to L^2(\R^{2n};\R^n)$ given by (see ~\cite{C20,DGLZ12,RZ2022unboundedFracCald})
	\[
	\nabla^su(x,y)\vcentcolon =\sqrt{\frac{C_{n,s}}{2}}\frac{u(x)-u(y)}{|x-y|^{n/2+s+1}}(x-y),
	\]
	which satisfies
	\begin{equation}
		\label{eq: bound on fractional gradient}
		\|\nabla^su\|_{L^2(\R^{2n})}=\|(-\Delta)^{s/2}u\|_{L^2(\R^n)}\leq \|u\|_{H^s(\R^n)}
	\end{equation}
	for all $u\in H^s(\R^n)$, where $C_{n,s}$ is the constant given by \eqref{C_ns}. The adjoint of $\nabla^s$ is called fractional divergence of order $s$ and denoted by $\Div_s$. More concretely, the fractional divergence of order $s$ is the bounded linear operator 
	\[
	\Div_s\colon L^2(\R^{2n};\R^n)\to H^{-s}(\R^n)
	\] 
	satisfying
	\[
	\langle \Div_su,v\rangle_{H^{-s}(\R^n)\times H^s(\R^n)}=\langle u,\nabla^sv\rangle_{L^2(\R^{2n})}
	\]
	for all $u\in L^2(\R^{2n};\R^n),v\in H^s(\R^n)$. One can show that (see ~\cite[Section 8]{RZ2022unboundedFracCald})
	\[
	\|\Div_s(u)\|_{H^{-s}(\R^n)}\leq \|u\|_{L^2(\R^{2n})}
	\]
	for all $u\in L^2(\R^{2n};\R^n)$, and also 
	$$
	(-\Delta)^su=\Div_s(\nabla^su)
	$$ 
	weakly for all $u\in H^s(\R^n)$ (see ~\cite[Lemma 2.1]{C20}). 
	
	\subsection{Bochner spaces}
	\label{subsec: Bochner spaces}
	
	Next, we introduce some standard function spaces for time-dependent PDEs adapted to the nonlocal setting considered in this article. Let $X$ be a Banach space and $(a,b)\subset\R$. Then we let $L^p(a,b\,;X)$ ($1\leq p<\infty$) stand for the space of measurable functions $u\colon (a,b)\to X$ such that 
	\begin{equation}
		\label{eq: Bochner spaces}
		\|u\|_{L^p(a,b\,;X)}\vcentcolon = \left(\int_a^b\|u(t)\|_{X}^p\,dt\right)^{1/p}<\infty
	\end{equation} 
	and $L^{\infty}(a,b\,;X)$ the space of measurable functions $u\colon (a,b)\to X$ such that 
	\begin{equation}
		\label{eq: infty spaces}
		\|u\|_{L^{\infty}(a,b\,;X)}\vcentcolon = \inf\{\,M\,;\,\|u(t)\|_X\leq M\,\text{a.e.}\,\}<\infty.
	\end{equation}
	As usual, we say that $u\in L^p_{loc}(a,b\,;X)$ if $\chi_K u\in L^p(a,b\,;X)$ for any compact set $K\subset (a,b)$, where $\chi_A$ denotes the characteristic function of the set $A$.

	Moreover, if $u\in L^1_{loc}(a,b\,;X)$ and $X$ is a space of functions over an open set $\Omega\subset\R^n$, as $L^p(\Omega)$, then $u$ is identified with a function $u(x,t)$ and $u(t)$ denotes the function $\Omega\ni x\mapsto u(x,t)$ for almost all $t$. This is justified from the fact, that any $u\in L^q(a,b\,;L^p(\Omega))$ with $1\leq q,p<\infty$ can be seen as a measurable function $u\colon \Omega\times (a,b)\to \R$ such that the norm $\|u\|_{L^q(a,b\,;L^p(\Omega))}$, as defined in \eqref{eq: Bochner spaces}, is finite. Clearly, a similar statement holds for the spaces $L^q(a,b\,;H^{s,p}(\R^n))$ and their local versions. Furthermore, the distributional derivative $\frac{du}{dt}\in \distr((a,b)\,;X)$ is identified with the derivative $\partial_tu\in \distr(\Omega\times (a,b))$ as long as it is well-defined. Here $ \distr((a,b)\,;X)$ stands for all continuous linear operators from $C_c^{\infty}((a,b))$ to $X$. Given two Banach spaces $X,Y$ such that $X\hookrightarrow Y$, we say $u\in L^2(a,b\,;X)$ has a (weak) time derivative $u'\vcentcolon =\frac{du}{dt}$ in $L^2(a,b\,;Y)$ if there exists $v\in L^2(a,b\,;Y)$ such that
	\begin{equation}
		\label{eq: weak time derivatives}
		\langle u',\eta\rangle\vcentcolon =-\int_a^bu(t)\eta'(t)\,dt=\int_a^bv(t)\eta(t)\,dt
	\end{equation}
	for $\eta\in C_c^{\infty}((a,b))$ (cf.~\cite{DautrayLionsVol5}). 
	
	\section{The forward problem of nonlocal diffusion equation}
	\label{sec: forward problem}
	
	In this section, we study the well-posedness of the initial exterior problem \eqref{main eq nonlocal diff} with possibly nonzero initial condition $u_0$ and the properties of the related DN maps.
	We start by setting up the relevant bilinear forms and then define the notion of solutions used throughout this article, which is in parallel to the theory developed for second order parabolic equations (see e.g. \cite{LadyzhenskajaParabolic,LadyzhenskajaBVP}).

	\begin{definition}[Definition of bilinear forms and conductivity matrix]\label{def: bilinear forms conductivity eq}
		Let $\Omega\subset\R^n$ be an open set, $0<s<\min(1,n/2)$, $\gamma\in L^{\infty}(\R^n_T)$. Then we define the conductivity matrix associated to $\gamma$ by
		\begin{equation}
			\label{eq: conductivity matrix}
			\Theta_{\gamma}\colon \R^{2n}\times (0,T)\to \R^{n\times n},\quad \Theta_{\gamma}(x,y,t)\vcentcolon =\gamma^{1/2}(x,t)\gamma^{1/2}(y,t)\mathbf{1}_{n\times n}
		\end{equation}
		for $x,y\in\R^n$, $0<t<T$ and the following time-dependent bilinear form for the fractional conductivity operator
		\begin{equation}
			\label{eq: conductivity bilinear form}
			\begin{split}
				&B_{\gamma}(t\,;\cdot,\cdot)\colon H^s(\R^n)\times H^s(\R^n)\to \R,\\
				&B_{\gamma}(t\,;u,v)\vcentcolon =\int_{\R^{2n}}\Theta_{\gamma}(t)\nabla^su\cdot\nabla^sv\,dxdy.
			\end{split}
		\end{equation}
		Moreover, if $m_{\gamma}\in L^{\infty}(0,T\,;H^{2s,\frac{n}{2s}}(\R^n))$, where $m_\gamma$ denotes the \emph{background deviation} 
		\begin{align}
			\label{m_gamma}
			m_{\gamma}\vcentcolon = \gamma^{1/2}-1 \text{ in }\R^n_T,
		\end{align}
		then we let $q_{\gamma}(t)\colon H^s(\R^n)\times H^s(\R^n)\to\R$ be defined by
		\begin{equation}
			\label{eq: def potential}
			\begin{split}
				\langle q_{\gamma}(t)u,v\rangle &\vcentcolon = -\left\langle\frac{(-\Delta)^sm_{\gamma}}{\gamma^{1/2}}u,v\right\rangle_{L^2(\R^n)},\\
				%		\langle q_{\gamma}(t)u,v\rangle &\vcentcolon = -\langle (-\Delta)^{s/2}m_{\gamma},(-\Delta)^{s/2}(\gamma^{-1/2}uv)\rangle_{L^2(\R^n)}
			\end{split}
		\end{equation}
		for $u,v\in H^s(\R^n)$. In this case we define the time-dependent bilinear form for the related fractional Schr\"odinger operator with potential $q_{\gamma}$:
		\begin{equation}
			\label{eq: Schroedinger bilinear form}
			\begin{split}
				&B_{q_{\gamma}}(t\,;\cdot,\cdot)\colon H^s(\R^n)\times H^s(\R^n)\to \R,\\
				&B_{q_{\gamma}}(t\,;u,v)\vcentcolon =\int_{\R^n}(-\Delta)^{s/2}u\,(-\Delta)^{s/2}v\,dx+\int_{\R^n}q_{\gamma}(t)uv\,dx
			\end{split}
		\end{equation}
		for all $u,v\in H^s(\R^n)$.
	\end{definition}
	\begin{remark}
		If no confusion can arise we will drop the subscript $\gamma$ in the definition for the conductivity matrix $\Theta_{\gamma}(t)$. Moreover, the boundedness and coercivity of these bilinear forms is established in Lemma~\ref{lem: continuity}.
	\end{remark}
	
	\begin{lemma}
		\label{lem: continuity}
		Let $0<s<\min(1,n/2)$ and suppose $\gamma=\gamma(x,t)\in L^{\infty}(\R^n_T)$ is uniformly elliptic satisfying \eqref{ellipticity}. If the background deviation $m_{\gamma}$ of $\gamma$ satisfies $m_{\gamma}\in L^{\infty}(0,T\,;H^{2s,\frac{n}{2s}}(\R^n))$, then there exists $C>0$ such that
		\begin{equation}
			\label{eq: uniform estimate conductivity equation}
			\left|B_{\gamma}(t\,;u,v)\right|\leq \|\gamma\|_{L^{\infty}(\R^n_T)}\|u\|_{H^s(\R^n)}\|v\|_{H^s(\R^n)}
		\end{equation}
		and
		\begin{align}\label{eq: uniform estimate fractional Schrodinger equation case 1}
			\left|B_{q_{\gamma}}(t\,;u,v)\right|\leq C\|u\|_{H^s(\R^n)}\|v\|_{H^s(\R^n)}.
		\end{align}
		%	when $m_{\gamma}\in L^{\infty}(0,T\,;H^{2s,\frac{n}{2s}}(\R^n))$ and
		%	\begin{align}\label{eq: uniform estimate fractional Schrodinger equation case 2}
			%		\left|B_{q_{\gamma}}(t\,;u,v)\right|\leq C\|u\|_{H^s(\R^n)}\|v\|_{H^s(\R^n)}
			%	\end{align}
		%	when $m_{\gamma}\in L^{\infty}(0,T\,;H^{s,\frac{n}{s}}(\R^n))$. 
		
		Moreover, if $\Omega\subset\R^n$ is an open set which is bounded in one direction, then the bilinear form $B_{\gamma}(t\,;\cdot,\cdot)$ is uniformly coercive over $\widetilde{H}^s(\Omega)$, that is, there exists $c>0$ such that
		\begin{equation}
			\label{eq: uniform coercivity}
			B_{\gamma}(t\,;u,u)\geq c\|u\|_{H^s(\R^n)}^2
		\end{equation}
		for all $u\in H^s(\R^n)$ and a.e. $0<t<T$.
	\end{lemma}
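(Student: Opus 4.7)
The plan is to treat the three estimates separately, combining direct pointwise bounds on the symbol $\Theta_\gamma$, the fractional Sobolev embedding for the potential term, and the Poincaré inequality from Proposition~\ref{thm:PoincUnboundedDoms} for coercivity.

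For the bound \eqref{eq: uniform estimate conductivity equation}, I would use the pointwise inequality $|\Theta_\gamma(x,y,t)|\leq \|\gamma\|_{L^\infty(\R^n_T)}$, which follows immediately from the definition of $\Theta_\gamma$ and \eqref{ellipticity}. Applying Cauchy--Schwarz on $L^2(\R^{2n};\R^n)$ yields
\[
|B_\gamma(t;u,v)|\leq \|\gamma\|_{L^\infty(\R^n_T)}\|\nabla^s u\|_{L^2(\R^{2n})}\|\nabla^s v\|_{L^2(\R^{2n})},
\]
and then \eqref{eq: bound on fractional gradient} completes the estimate. This is routine.

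For the bound \eqref{eq: uniform estimate fractional Schrodinger equation case 1} on $B_{q_\gamma}$, the leading term $\int (-\Delta)^{s/2}u\,(-\Delta)^{s/2}v\,dx$ is controlled by $\|u\|_{H^s}\|v\|_{H^s}$ by Cauchy--Schwarz. For the potential term, note that $m_\gamma\in L^\infty(0,T;H^{2s,n/(2s)}(\R^n))$ implies $(-\Delta)^s m_\gamma\in L^\infty(0,T;L^{n/(2s)}(\R^n))$, and the lower bound $\gamma^{1/2}\geq \gamma_0^{1/2}$ gives $q_\gamma\in L^\infty(0,T;L^{n/(2s)}(\R^n))$ with a uniform-in-$t$ bound. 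Since $0<2s<n$, the Sobolev embedding $H^s(\R^n)\hookrightarrow L^{2n/(n-2s)}(\R^n)$ holds. Applying the triple H\"older inequality with exponents $\frac{n}{2s}$, $\frac{2n}{n-2s}$, $\frac{2n}{n-2s}$ (which sum to $1$), I would conclude
\[
\left|\int_{\R^n} q_\gamma(t) u v\,dx\right|\leq \|q_\gamma(t)\|_{L^{n/(2s)}}\|u\|_{L^{2n/(n-2s)}}\|v\|_{L^{2n/(n-2s)}}\leq C\|u\|_{H^s(\R^n)}\|v\|_{H^s(\R^n)}.
\]
Ensuring that the Sobolev exponents line up correctly is the main technical point, but it is forced by the choice of the Bessel space $H^{2s,n/(2s)}$ for $m_\gamma$.

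For the coercivity \eqref{eq: uniform coercivity}, I would first compute
\[
B_\gamma(t;u,u)=\int_{\R^{2n}} \gamma^{1/2}(x,t)\gamma^{1/2}(y,t)|\nabla^s u(x,y)|^2\,dx\,dy\geq \gamma_0\|\nabla^s u\|_{L^2(\R^{2n})}^2=\gamma_0\|(-\Delta)^{s/2}u\|_{L^2(\R^n)}^2,
\]
using the ellipticity \eqref{ellipticity} and the identity in \eqref{eq: bound on fractional gradient}. Since $\Omega$ is bounded in one direction, Proposition~\ref{thm:PoincUnboundedDoms} applied with $p=2$, $s=0$ and $t=s$ gives a constant $C=C(n,s,\Omega)>0$ such that $\|u\|_{L^2(\R^n)}\leq C\|(-\Delta)^{s/2}u\|_{L^2(\R^n)}$ for all $u\in \widetilde{H}^s(\Omega)$. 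Consequently,
\[
\|u\|_{H^s(\R^n)}^2=\|u\|_{L^2(\R^n)}^2+\|(-\Delta)^{s/2}u\|_{L^2(\R^n)}^2\leq (C^2+1)\|(-\Delta)^{s/2}u\|_{L^2(\R^n)}^2,
\]
so \eqref{eq: uniform coercivity} holds with $c=\gamma_0/(C^2+1)$, uniformly in $t$. I do not expect serious obstacles here; the main care is to cite Proposition~\ref{thm:PoincUnboundedDoms} with the correct parameters so that it applies to the unbounded-but-one-direction-bounded $\Omega$.
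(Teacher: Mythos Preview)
Your proof is correct and follows essentially the same approach as the paper: the boundedness of $B_\gamma$ via the pointwise bound on $\Theta_\gamma$ and \eqref{eq: bound on fractional gradient}, the potential term via $q_\gamma\in L^{n/(2s)}$ combined with the Sobolev embedding $H^s\hookrightarrow L^{2n/(n-2s)}$ and H\"older, and coercivity via ellipticity plus the Poincar\'e inequality of Proposition~\ref{thm:PoincUnboundedDoms}. The only cosmetic difference is that the paper outsources your H\"older/Sobolev argument for the potential term to an external lemma (\cite[Lemma~8.3]{RZ2022unboundedFracCald}), whereas you spell it out directly.
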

	
	%\begin{remark}
	%	If it is clear from the context to which conductivity $\gamma$ the background deviation $m_{\gamma}$ corresponds, we will simply denote it by $m$ and the same convention applies to the potentials $q_{\gamma}$. \footnote{YH: I think this is no need.}
	%\end{remark}
	
	\begin{proof}
		Throughout the proof we will write $m$ and $q$ instead of $m_{\gamma},q_{\gamma}$. The estimate \eqref{eq: uniform estimate conductivity equation} follows immediately from \eqref{eq: bound on fractional gradient}. Next note that by \cite[Lemma~8.3]{RZ2022unboundedFracCald}, the uniform ellipticity of $\gamma$ and the boundedness of the fractional Laplacian there holds
		\[
		\begin{split}
			\left|B_q(t\,;u,v)\right|&\leq C\LC 1+\|q(t)\|_{L^{\frac{n}{2s}}(\R^n)}\RC \|u\|_{H^s(\R^n)}\|v\|_{H^s(\R^n)}\\
			&\leq C\LC 1+\|m(t)\|_{H^{2s,\frac{n}{2s}}(\R^n)}\RC \|u\|_{H^s(\R^n)}\|v\|_{H^s(\R^n)}\\
			&\leq C\gamma_0^{1/2}\LC 1+\|m\|_{L^{\infty}(0,T\,;H^{2s,\frac{n}{2s}}(\R^n))}\RC\|u\|_{H^s(\R^n)}\|v\|_{H^s(\R^n)}.
		\end{split}
		\]
		%	if $m\in L^{\infty}(0,T\,;H^{2s,\frac{n}{2s}}(\R^n))$. 
		%	This proves \eqref{eq: uniform estimate fractional Schrodinger equation case 1}. Similarly, in the second case we deduce from \cite[Lemma~3.7]{RZ2022LowReg} the estimate
		%	\[
		%	\begin{split}
			%		\left|B_q(t\,;u,v)\right|&\leq C[(1+\gamma_0\|m(t)\|_{L^{\infty}(\R^n)}+\|m(t)\|_{H^{s,n/s}(\R^n)})\|m(t)\|_{H^{s,n/s}(\R^n)}+1]
			%		\\ &\quad\quad \times \|u\|_{H^s(\R^n)}\|v\|_{H^s(\R^n)}\\
			%		&\leq C\gamma_0(1+\|m(t)\|_{H^{s,n/s}(\R^n)})^2\|u\|_{H^s(\R^n)}\|v\|_{H^s(\R^n)}\\
			%		&\leq C\gamma_0(1+\|m\|_{L^{\infty}(0,T\,;H^{2s,n/s}(\R^n))})^2\|u\|_{H^s(\R^n)}\|v\|_{H^s(\R^n)}
			%	\end{split}
		%	\]
		%	This establishes the estimate \eqref{eq: uniform estimate fractional Schrodinger equation case 2} and we can conclude the proof. Finally, t
		The uniform coercivity estimate \eqref{eq: uniform coercivity} of $B_{\gamma}(t\,;\cdot,\cdot)$ follows by the uniform ellipticity of $\gamma$, \eqref{eq: bound on fractional gradient} and the Poincar\'e inequality, cf. Proposition~\ref{thm:PoincUnboundedDoms}.
	\end{proof}
	
	\begin{definition}[Weak solutions]
		Let $\Omega\subset \R^n$ be an open set, $0<T<\infty$, $0<s<1$ and assume that $\gamma\in L^{\infty}(\R^n_T)$ is uniformly elliptic. Let $u_0\in L^2(\Omega)$, $f\in L^2(0,T\,;H^s(\R^n))$ and $F\in L^2(0,T\,;H^{-s}(\Omega))$. 
		\begin{enumerate}[(i)]
			\item\label{item: nonlocal heat equation} We say that $u\in  L^2(0,T\,;H^s(\R^n))$ solves the nonlocal diffusion equation 
			\begin{equation}
				\label{eq: solutions of PDE}
				\partial_tu+\Div_s(\Theta_{\gamma}\nabla^s u)= F\quad\text{in}\quad\Omega_T,
			\end{equation}
			if the equation is satisfied in the sense of distributions, that is, there holds
			\begin{equation}
				\label{eq: distributional solutions}
				\mathbf{B}_{\gamma}(u,\varphi)\vcentcolon = -\int_{\Omega_T}u\partial_t\varphi\,dxdt+\int_0^T B_{\gamma}(t\,;u,\varphi)\,dt=\langle F,\varphi\rangle
			\end{equation}
			for all $\varphi \in C_c^{\infty}(\Omega_T)$, where $\langle\cdot,\cdot\rangle$ denotes the natural duality pairing.
			\item\label{item: initial value problem nonlocal heat equation} We say that $u\in L^{\infty}(0,T\,;L^2(\Omega))\cap L^2(0,T\,;H^s(\R^n))$ solves
			\begin{align}
				\label{eq: weak solutions of conductivity equation}
				\begin{cases}
					\partial_tu+\Div_s(\Theta_{\gamma}\nabla^s u)= F   & \text{ in }\Omega_T,\\
					u= f  &\text{ in } (\Omega_e)_T,\\
					u(0)=u_0 & \text{ in } \Omega,
				\end{cases}
			\end{align}
			if the exterior value $f$ is attained in the sense $u-f\in L^2(0,T\,;\widetilde{H}^s(\Omega))$ and there holds
			\begin{equation}
				\label{eq: solutions exterior value problem nonlocal heat eq}
				\mathbf{B}_{\gamma}(u,\varphi)=\langle F,\varphi\rangle+\int_{\Omega} u_0(x)\varphi(x,0)\,dx
			\end{equation}
			for all $\varphi\in C_c^{\infty}(\Omega\times [0,T))$.
		\end{enumerate}
	\end{definition}
	
	%	\begin{remark}
		%		\label{remark: weak solutions for other nonlocal operators}
		%		We will use the same notion of solutions if the fractional conductivity operator is replaced by a more general nonlocal operator of the form
		%		\begin{equation}
			%			\label{eq: time-dependent nonlocal operators}
			%			Lu(x,t)\vcentcolon =\,\int_{\R^n}(u(x)-u(y))K(x,y,t)\,dy.
			%		\end{equation}
		%		In this case the bilinear form $B_{\gamma}(t\,;\cdot,\cdot)$ would correspond to
		%		\begin{equation}
			%			\label{eq: general bilinear forms}
			%			B_K(t\,;u,v)\vcentcolon = \int_{\R^n}\int_{\R^n}(u(x)-u(y))(v(x)-v(y))K(x,y,t)\,dxdy.
			%		\end{equation}
		%		As a special case, we have a notion of weak solutions for fractional Schr\"odinger operators with time-dependent potentials $q$ (cf.~Definition~\ref{def: bilinear forms conductivity eq} and Lemma~\ref{lem: continuity})
		%	\end{remark}
	
	\begin{remark}
		\label{remark: why initial condition in L Omega}
		Let us briefly explain why we prescribed the initial condition in $L^2(\Omega)$ and not in $L^2(\R^n)$. One knows that there holds $(\widetilde{H}^s(\Omega))^*=H^{-s}(\Omega)$ for any $s\in\R$ and any open set $\Omega\subset \R^n$. On the other hand, by density of $C^{\infty}_c(\Omega_T)$ in $L^2(0,T\,;\widetilde{H}^s(\Omega))$ the equation \eqref{eq: solutions of PDE} implies that $\partial_tu$ can be identified with an element in $L^2(0,T\,;H^{-s}(\Omega))$. By the trace theorem \cite[Chapter~XVIII, Section~1.2, Theorem~1]{DautrayLionsVol5} this implies $u\in C([0,T];L^2(\Omega))$. Thus, $u\in L^{\infty}(0,T\,;L^2(\Omega))\cap L^2(0,T\,;H^s(\R^n))$ is a solution to \eqref{eq: weak solutions of conductivity equation} if and only if $u\in L^2(0,T;H^s(\R^n))$ with $\partial_tu\in L^2(0,T\,;H^{-s}(\Omega))$ solves \eqref{eq: solutions of PDE} in the sense of distributions, $u-f\in L^2(0,T\,;\widetilde{H}^s(\Omega))$ and there holds $u(0)=u_0$ in the sense of traces. 
		%One is now maybe tempted to say that this is the same as saying that $u$ solves
		%\begin{equation}
		%	\label{eq: simplified definition}
		%	\langle \partial_t u,\phi\rangle_{H^{-s}(\Omega)\times H^s_0(\Omega)}+B_{\gamma}(t\,;u,\phi)=\langle F(t),\phi\rangle_{H^{-s}(\Omega)\times H^s_0(\Omega)}
		%\end{equation}
		%or even 
		%\[
		%\frac{d}{dt}(u,\phi)_{L^2(\Omega)}+B_{\gamma}(t\,;u,\phi)=\langle F(t),\phi\rangle_{H^{-s}(\Omega)\times H^s_0(\Omega)}
		%\]
		%for all $\phi \in H^s_0(\Omega)=\|\cdot\|_{H^s(\Omega)}-\text{clos}(C_c^{\infty}(\Omega))$ and a.e. $0<t<T$. This approximation does not work out, since we are integrating in $B_{\gamma}(t\,;u,\phi)$ over all of $\R^n$.
		By approximation one sees that $u\in L^{\infty}(0,T\,;L^2(\Omega))\cap L^2(0,T\,;H^s(\R^n))$ is a solution of \eqref{eq: solutions of PDE} if and only if $u\in L^2(0,T\,;H^s(\R^n))$ with $\partial_t u\in L^2(0,T\,; H^{-s}(\Omega))$ satisfies
		\begin{equation}
			\label{eq: simplified definition 2}
			\langle \partial_t u,\varphi\rangle_{H^{-s}(\Omega)\times \widetilde{H}^s(\Omega)}+B_{\gamma}(t\,;u,\varphi)=\langle F(t),\varphi\rangle_{H^{-s}(\Omega)\times \widetilde{H}^s(\Omega)}
		\end{equation}
		for all $\varphi \in \widetilde{H}^s(\Omega)$ in the sense of distributions on $(0,T)$, $u(0)=u_0$ and $u-f\in L^2(0,T\,;\widetilde{H}^s(\Omega))$.
	\end{remark}
	
	\begin{theorem}[Well-posedness of the forward problem]
		\label{thm: well-posedness of forward problem}
		Let $\Omega\subset \R^n$ be an open set bounded in one direction, $0<T<\infty$, $0<s<\min(1,n/2)$ and assume that $\gamma\in L^{\infty}(\R^n_T)$ is uniformly elliptic. Assume that $F\in L^2(0,T\,;H^{-s}(\Omega))$, $f\in L^2(0,T\,;H^s(\R^n))$ with $\partial_tf\in L^2(0,T\,;H^{-s}(\R^n))$ and $u_0\in L^2(\Omega)$.
		\begin{enumerate}[(i)]
			\item Then there exists a unique solution $u\in L^{\infty}(0,T\,;L^2(\Omega))\cap L^2(0,T\,;H^s(\R^n))$ and $\p_t u \in L^2(0,T\,;H^{-s}(\Omega))$ of 
			\begin{align}
				\label{eq: weak solutions of conductivity equation existence}
				\begin{cases}
					\partial_tu+\Div_s \LC \Theta_{\gamma}\nabla^s u \RC= F &\text{ in }\Omega_T,\\
					u= f   &\text{ in }(\Omega_e)_T,\\
					u(0)=u_0  & \text{ in } \Omega
				\end{cases}
			\end{align}
			satisfying the energy estimate
			\begin{equation}
				\label{eq: energy estimate}
				\begin{split}
					&\|u-f\|_{L^{\infty}(0,T\,;L^2(\Omega))}^2+\|u-f\|_{L^2(0,T\,;H^s(\R^n))}^2+\norm{\p_t (u-f)}_{L^2(0,T\,;H^{-s}(\Omega))}^2\\
					\leq & C \left( \|u_0\|_{L^2(\Omega)}^2+\|f(0)\|_{L^2(\Omega)}^2+\|F\|^2_{L^2(0,T\,;H^{-s}(\Omega))}+\|\partial_tf\|^2_{L^2(0,T\,;H^{-s}(\Omega))}\right. \\
					& \left. +\|\Div_s(\Theta_{\gamma}\nabla^sf)\|^2_{L^2(0,T\,;H^{-s}(\Omega))}\right),
				\end{split}
			\end{equation}
			for  some constant $C>0$ independent of $F,f$ and $u_0$.
			\item If additionally the conductivity $\gamma$ satisfies $m_{\gamma}\in L^{\infty}(0,T\,;H^{4s,\frac{n}{2s}}(\R^n))$ with $\partial_t\gamma\in L^{\infty}(\R^n_T)$, $F\in L^2(\Omega_T)$, $f\in H^1(0,T\,;L^2(\R^n))\cap L^2(0,T\,;H^{2s}(\R^n))$ and $u_0\in H^s(\R^n)$ such that $u_0-f(0)\in \widetilde{H}^s(\Omega)$ then the unique solution $u$ to \eqref{eq: weak solutions of conductivity equation existence} satisfies $u\in L^{\infty}(0,T\,;H^s(\R^n))$, $\partial_tu\in L^2(\Omega_T)$ and
			\begin{equation}
				\label{eq: bound time derivate}
				\begin{split}
					&\|\partial_t(u-f)\|^2_{L^2(\Omega_T)}+\|u-f\|^2_{L^{\infty}(0,T\,;H^s(\R^n))}\\
					\leq & C\left(\|u_0\|_{H^s(\R^n)}^2+\|F\|^2_{L^2(\Omega_T)} \right.\\
					&\quad \left. +\|f(0)\|^2_{H^s(\R^n)}+\|\partial_t f\|_{L^2(\Omega_T)}^2+\|f\|_{L^2(0,T\,;H^{2s}(\R^n))}^2\right),
				\end{split}
			\end{equation}
			for some $C>0$ independent of the data $F,f$ and $u_0$.
		\end{enumerate}
	\end{theorem}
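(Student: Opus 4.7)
My plan for part (i) is to reduce the exterior-value problem to one with vanishing exterior data and then apply the classical existence theorem for parabolic equations driven by time-dependent coercive bilinear forms (cf.\ \cite[Chapter~XVIII]{DautrayLionsVol5}). Setting $w \vcentcolon= u - f$, problem \eqref{eq: weak solutions of conductivity equation existence} becomes
\begin{equation*}
    \begin{cases}
        \partial_t w + \Div_s(\Theta_\gamma \nabla^s w) = \tilde F & \text{in } \Omega_T,\\
        w = 0 & \text{in } (\Omega_e)_T,\\
        w(0) = u_0 - f(0) & \text{in } \Omega,
    \end{cases}
\end{equation*}
with $\tilde F \vcentcolon= F - \partial_t f - \Div_s(\Theta_\gamma \nabla^s f) \in L^2(0,T; H^{-s}(\Omega))$, the last term being controlled via Lemma~\ref{lem: continuity}. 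Since $f \in L^2(0,T; H^s(\R^n))$ with $\partial_t f \in L^2(0,T; H^{-s}(\R^n))$, the trace theorem recalled in Remark~\ref{remark: why initial condition in L Omega} gives $f \in C([0,T]; L^2(\R^n))$, so that $w(0) \in L^2(\Omega)$ is well defined. I then apply the Lions existence theorem to the Gelfand triple $V = \widetilde{H}^s(\Omega) \hookrightarrow H = L^2(\Omega) \hookrightarrow V^* = H^{-s}(\Omega)$ with the family $\{B_\gamma(t; \cdot, \cdot)\}_{t \in (0,T)}$, which is measurable in $t$, uniformly bounded, and uniformly coercive by Lemma~\ref{lem: continuity}; coercivity relies on Proposition~\ref{thm:PoincUnboundedDoms} (Poincar\'e inequality on sets bounded in one direction). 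This produces a unique $w \in L^2(0,T; V)$ with $\partial_t w \in L^2(0,T; V^*)$, automatically lying in $C([0,T]; L^2(\Omega))$.

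The energy estimate for $w$ follows by testing the equation against $w$ itself, using $\langle \partial_t w, w\rangle_{V^*\times V} = \tfrac{1}{2}\tfrac{d}{dt}\|w\|_{L^2(\Omega)}^2$, coercivity, Young's inequality, and Gr\"onwall; the $L^2(0,T; H^{-s}(\Omega))$ bound on $\partial_t w$ then follows by reading the PDE and applying \eqref{eq: uniform estimate conductivity equation}. Transferring back to $u = w + f$ and collecting the $f$-dependent terms produces exactly \eqref{eq: energy estimate}.

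For part (ii), the additional regularity promotes $\tilde F$ to $L^2(\Omega_T)$ and the initial datum $w(0) = u_0 - f(0)$ to $\widetilde{H}^s(\Omega)$. I would build a Galerkin approximation $w_m$ on a countable dense family in $\widetilde{H}^s(\Omega)$ and test the approximate ODE against $\partial_t w_m$. Exploiting the symmetry of $\Theta_\gamma$,
\[
    B_\gamma(t; w_m, \partial_t w_m) = \tfrac{1}{2}\tfrac{d}{dt} B_\gamma(t; w_m, w_m) - \tfrac{1}{2}\dot B_\gamma(t; w_m, w_m),
\]
where $\dot B_\gamma$ is the bilinear form whose kernel is $\partial_t \Theta_\gamma$ and satisfies $|\dot B_\gamma(t; w_m, w_m)| \leq C\|\partial_t \gamma\|_{L^\infty(\R^n_T)}\|w_m\|_{H^s(\R^n)}^2$. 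Integrating in time, combining with coercivity, and invoking Gr\"onwall yields uniform bounds on $\|\partial_t w_m\|_{L^2(\Omega_T)}$ and $\sup_{t \in [0,T]}\|w_m(t)\|_{H^s(\R^n)}$; weak/weak-$\ast$ compactness then passes these bounds to the limit and \eqref{eq: bound time derivate} is obtained after rewriting in terms of $u - f$.

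The principal technical obstacle I anticipate is the justification of the differentiation identity for $B_\gamma(t; w_m, w_m)$ in part (ii): this is exactly where the hypothesis $\partial_t \gamma \in L^\infty(\R^n_T)$ enters, since otherwise $\partial_t \Theta_\gamma$ is not pointwise controlled and the test against $\partial_t w_m$ is not licit. A minor but necessary check is that $\Div_s(\Theta_\gamma \nabla^s f)$, initially estimated in $L^2(0,T; H^{-s}(\R^n))$, also lies in $L^2(0,T; H^{-s}(\Omega))$, which is automatic by restriction. The unboundedness of $\Omega$ poses no extra difficulty because Proposition~\ref{thm:PoincUnboundedDoms} provides the Poincar\'e inequality on $\widetilde{H}^s(\Omega)$ in exactly the form required for coercivity.
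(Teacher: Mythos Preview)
Your proposal is correct and follows essentially the same route as the paper's proof: reduce to zero exterior data and invoke the Lions parabolic theory on the Gelfand triple $\widetilde{H}^s(\Omega)\hookrightarrow L^2(\Omega)\hookrightarrow H^{-s}(\Omega)$ for (i), then for (ii) test the Galerkin approximants against $\partial_t w_m$ via the differentiation identity for $B_\gamma(t;w_m,w_m)$ and pass to the limit by weak/weak-$\ast$ compactness. The one point you understate is that $\Div_s(\Theta_\gamma\nabla^s f)\in L^2(\Omega_T)$ in part (ii) is not automatic: the paper spends real effort here, using the slicewise Liouville identity to rewrite this term via $(-\Delta)^s(\gamma^{1/2}f)$ and $q_\gamma\gamma^{1/2}f$ together with product estimates in Bessel potential spaces, and this is precisely where the hypotheses $m_\gamma\in L^\infty(0,T;H^{4s,n/(2s)}(\R^n))$ and $f\in L^2(0,T;H^{2s}(\R^n))$ are consumed.
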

	
	\begin{remark}
		\label{rem: notation}
		If $F=u_0=0$, let us denote the unique solution of \eqref{eq: weak solutions of conductivity equation existence} by $u_f$ for simplicity.
	\end{remark}
	
	\begin{comment}
		\begin{remark}\footnote{P: Not really important. We can remove this remark.}
			Note that for $s\geq 1/2$ and $f\in H^1(0,\infty\,;L^2(\R^n))\cap L^2(0,\infty\,;H^{2s}(\R^n))$ then \cite[Chapter~XVIII, Section~1.1.4, Theorem~5]{DautrayLionsVol5} implies $f(0)\in H^{1/2}(\R^n)$. On the other hand if $m\in\N_0$, $\Omega\Subset \R^n$ smooth and $f\in L^2(0,T\,;H^{m+2}(\Omega))$ with $\partial_tf\in L^2(0,T\,;H^m(\Omega))$, then we have $f(0)\in H^{m}(\Omega)$ (cf. \cite[p.~304]{EvansPDE}). \footnote{P: Is there something better? Would be nice to have some condition which guarantees that $f(0)\in H^s(\R^n)$.}
		\end{remark}
	\end{comment}

	\begin{proof}[Proof of Theorem \ref{thm: well-posedness of forward problem}]
		(i): By the regularity assumptions on the exterior value $f$ and the trace theorem \cite[Chapter~XVIII, Section~1.2, Theorem~2]{DautrayLionsVol5}, we see that $u\in L^{\infty}(0,T\,;L^2(\Omega))\cap L^2(0,T\,;H^s(\R^n))$ is a solution of \eqref{eq: weak solutions of conductivity equation existence} if and only if $\widetilde{u}\vcentcolon =u-f\in L^{\infty}(0,T\,;L^2(\Omega))\cap L^2(0,T\,;H^s(\R^n))$ solves the homogeneous problem 
		\begin{align}
			\label{eq: weak solutions of homogeneous conductivity equation existence}
			\begin{cases}
				\partial_t\widetilde{u}+\Div_s \LC \Theta_{\gamma}\nabla^s \widetilde{u}\RC= \widetilde{F} &\text{ in }\Omega_T,\\
				\widetilde{u}= 0  &\text{ in } (\Omega_e)_T,\\
				\widetilde{u}(0)=\widetilde{u}_0 &\text{ in }\Omega,
			\end{cases}
		\end{align}
		with $\widetilde{u}_0\vcentcolon = u_0-f(0)\in L^2(\Omega)$ and
		\[
		\widetilde{F}\vcentcolon = F-\partial_tf-\Div_s(\Theta_{\gamma}\nabla^sf)\in L^2(0,T\,;H^{-s}(\Omega)).
		\]
		Note that by Remark~\ref{remark: why initial condition in L Omega} this means that $\widetilde{u}\in L^{\infty}(0,T\,;L^2(\Omega))\cap L^2(0,T\,;\widetilde{H}^s(\Omega))$ satisfies
		\begin{equation}
			\label{eq: simplified definition 3}
			\langle \partial_t \widetilde{u},\varphi\rangle_{H^{-s}(\Omega)\times \widetilde{H}^s(\Omega)}+B_{\gamma}(t\,;\widetilde{u},\varphi)=\langle \widetilde{F}(t),\varphi\rangle_{H^{-s}(\Omega)\times \widetilde{H}^s(\Omega)}
		\end{equation}
		for all $\varphi \in \widetilde{H}^s(\Omega)$ in the sense of distributions on $(0,T)$, $\widetilde{u}(0)=\widetilde{u}_0$. Now one can construct the solution $\widetilde{u}$ by the classical Galerkin approximation. In fact, using Lemma~\ref{lem: continuity} we deduce from \cite[Chapter~XVIII, Section~3.1-3.2, Theorem~1 and 2]{DautrayLionsVol5} that this problem has a unique solution $\widetilde{u}\in L^2(0,T\,;\widetilde{H}^s(\Omega))$ with $\partial_t \widetilde{u}\in L^2(0,T\,;H^{-s}(\Omega))$. Hence, our solution to the original problem is $u=\widetilde{u}+f$.

		Next we prove that this solution is the unique solution to \eqref{eq: weak solutions of conductivity equation existence}. Assume there are two solutions $u,v\in L^{\infty}(0,T\,;L^2(\Omega))\cap L^2(0,T\,;H^s(\R^n))$ to \eqref{eq: weak solutions of conductivity equation existence} then $w\vcentcolon = u-v$ solves  
		\begin{align}
			\label{eq: weak solutions uniqueness}
			\begin{cases}
				\partial_tw+\Div_s \LC \Theta_{\gamma}\nabla^s w \RC= 0 &\text{ in }\Omega_T,\\
				w= 0   &\text{ in }(\Omega_e)_T,\\
				w(0)=0  & \text{ in } \Omega.
			\end{cases}
		\end{align}
		But this means by approximation and integration by parts that there holds
		\[
		\int_0^T\langle \partial_t w,\varphi\rangle \eta\,dt+\int_0^TB_{\gamma}(w,\varphi)\,\eta\,dt=0
		\]
		for all $\varphi\in \widetilde{H}^s(\Omega)$, $\eta\in C_c^{\infty}((0,T))$ and hence
		\[
		\langle \partial_t w,\varphi\rangle +B_{\gamma}(w,\varphi)=0
		\]
		for a.e. $t\in (0,T)$. Hence, replacing $\varphi$ by $w(t)\in \widetilde{H}^s(\Omega)$ and integrating the resulting equation over $(0,T)$ gives
		\[
		\frac{\|w(T)\|_{L^2(\Omega)}^2}{2}+\int_0^{T}B_{\gamma}(w,w)\,dt=0.
		\]
		Here we used $w(0)=0$ in $\Omega$ and the integration by parts formula in Banach spaces. Using the uniform ellipticity of $\gamma$ and Poincar\'e's inequality it follows that $w=0$ and therefore $u=v$ in $\R^n_T$.\\ 
		Next we show the energy estimate \eqref{eq: energy estimate}. By \cite[eq.~(3.70)]{DautrayLionsVol5} there holds
		\begin{equation}\label{eq: energy estimate 1}
			\begin{split}
				&\frac{\|(u-f)(t)\|_{L^2(\Omega)}^2}{2} +\int_0^t\int_{\R^{2n}}\Theta_{\gamma}\nabla^s(u-f)(\tau)\cdot\nabla^s(u-f)(\tau)\,dxdyd\tau\\
				=&\frac{\|u_0-f(0)\|_{L^2(\Omega)}^2}{2}+\int_0^t\langle \widetilde{F}(\tau),(u-f)(\tau)\rangle_{H^{-s}(\Omega)\times \widetilde{H}^s(\Omega)}\,d\tau 
			\end{split}
		\end{equation}
		for all $t\in (0,T)$. The right hand side can be estimated as
		\begin{equation}
			\label{eq: estimate RHS}
			\begin{split}
				&\frac{\|u_0-f(0)\|_{L^2(\Omega)}^2}{2}+\int_0^t\langle \widetilde{F}(\tau ),(u-f)(\tau )\rangle_{H^{-s}(\Omega)\times \widetilde{H}^s(\Omega)}\,d\tau \\
				\leq &C(\|u_0\|_{L^2(\Omega)}^2+\|f(0)\|_{L^2(\Omega)}^2)+\|\widetilde{F}\|_{L^2(0,T\,;H^{-s}(\Omega))}\|u-f\|_{L^2(0,T\,;\widetilde{H}^s(\Omega))}\\
				\leq &C(\|u_0\|_{L^2(\Omega)}^2+\|f(0)\|_{L^2(\Omega)}^2)+(\|F\|_{L^2(0,T\,;H^{-s}(\Omega))}+\|\partial_tf\|_{L^2(0,T\,;H^{-s}(\Omega))}\\
				& +\|\Div_s(\Theta_{\gamma}\nabla^sf)\|_{L^2(0,T\,;H^{-s}(\Omega))})\|u-f\|_{L^2(0,T\,;H^s(\R^n))}.
			\end{split}
		\end{equation}

		On the other hand using the uniform ellipticity of $\gamma$ and the fractional Poincar\'e inequality, the left hand side of \eqref{eq: energy estimate} can be bounded from below by
		\begin{equation}
			\label{eq: lower bound LHS}
			\begin{split}
				&\frac{\|(u-f)(t)\|_{L^2(\Omega)}^2}{2} +\int_0^t\int_{\R^{2n}}\Theta_{\gamma}\nabla^s(u-f)(\tau )\cdot\nabla^s(u-f)(\tau )\,dxdyd\tau \\
				\geq &\frac{\|(u-f)(t)\|_{L^2(\Omega)}^2}{2}+c\|\nabla^s(u-f)\|_{L^2(0,t\,;L^2(\R^{2n}))}^2,\\
				\geq & \frac{\|(u-f)(t)\|_{L^2(\Omega)}^2}{2}+c\|(-\Delta)^{s/2}(u-f)\|_{L^2(0,t\,;L^2(\R^n))}^2\\
				\geq &c \LC \|(u-f)(t)\|_{L^2(\Omega)}^2+\|u-f\|_{L^2(0,t\,;H^s(\R^n))}^2\RC.
			\end{split}
		\end{equation}
		Hence, combining \eqref{eq: estimate RHS} and \eqref{eq: lower bound LHS} we deduce
		\[
		\begin{split}
			&\|u-f\|_{L^{\infty}(0,T\,;L^2(\Omega))}^2+\|u-f\|_{L^2(0,T\,;H^s(\R^n))}^2\\
			\leq& C \left( \|u_0\|_{L^2(\Omega)}^2+\|f(0)\|_{L^2(\Omega)}^2\right)+C\left(\|F\|_{L^2(0,T\,;H^{-s}(\Omega))}+\|\partial_tf\|_{L^2(0,T\,;H^{-s}(\Omega))}\right.\\
			&\qquad \left. +\|\Div_s(\Theta_{\gamma}\nabla^sf)\|_{L^2(0,T\,;H^{-s}(\Omega))}\right)\|u-f\|_{L^2(0,T\,;H^s(\R^n))}.
		\end{split}
		\]
		Next, recall that for all $\epsilon>0$ and $a,b\in\R$ there holds the estimate $ab\leq \epsilon a^2+C_{\epsilon}b^2$, where $C_{\epsilon}>0$. Hence, after absorbing the term $\epsilon \|u-f\|^2_{L^2(0,T\,;H^s(\R^n))}$ on the left hand side we obtain
		\[
		\begin{split}
			&\|u-f\|_{L^{\infty}(0,T\,;L^2(\Omega))}^2+\|u-f\|_{L^2(0,T\,;H^s(\R^n))}^2\\
			\leq& C\left(\|u_0\|_{L^2(\Omega)}^2+\|f(0)\|_{L^2(\Omega)}^2+\|F\|^2_{L^2(0,T\,;H^{-s}(\Omega))}+\|\partial_tf\|^2_{L^2(0,T\,;H^{-s}(\Omega))} \right.\\
			&\quad \left. +\|\Div_s(\Theta_{\gamma}\nabla^sf)\|^2_{L^2(0,T\,;H^{-s}(\Omega))}\right).
		\end{split}
		\]

		Now, by the equation again, one knows that 
		\begin{align}
			\langle  \p_t (u-f), \varphi \rangle_{H^{-s}(\Omega)\times \widetilde{H}^s(\Omega)}+B_{\gamma}(t\,; u-f,\varphi)=\langle \widetilde{F},\varphi\rangle_{H^{-s}(\Omega)\times \widetilde{H}^s(\Omega)},
		\end{align} 
		for any $\varphi \in \wt H^s(\Omega)$ and a.e. $t\in (0,T)$. Consequently, 
		\begin{align}
			\left| \langle  \p_t (u-f), \varphi \rangle_{H^{-s}(\Omega)\times \widetilde{H}^s(\Omega)}  \right|\leq  \left| B_{\gamma}(t\,; u-f,\varphi)\right|+ \left|\langle \widetilde{F},\varphi\rangle_{H^{-s}(\Omega)\times \widetilde{H}^s(\Omega)}\right|,
		\end{align}
		Integrating from $0$ to $T$ and using $(L^2(0,T\,;\wt H^s(\Omega)))^{\ast}=L^2(0,T\,;H^{-s}(\Omega))$, one can conclude that $\p_t (u-f) \in L^2(0,T;\,  H^{-s}(\Omega))$. Since $\partial_tf\in L^2(0,T\,;H^{-s}(\Omega))$ we obtain $\p_t u \in L^2(0,T;\,  H^{-s}(\Omega))$ as desired. \\

		(ii): First we show that $\Div_s(\Theta_{\gamma}\nabla^sf)\in L^2(\Omega_T)$. More concretely, we prove that there holds
		\[
		\left|\int_0^T\langle \Div_s(\Theta_{\gamma}\nabla^sf),\varphi\rangle\,dt\right|\leq C \|f\|_{L^2(0,T\,;H^{2s}(\R^n))}\|\varphi\|_{L^2(\Omega_T)}
		\]
		for all $\varphi\in C_c^{\infty}(\Omega_T)$ and some $C>0$ independent of $\varphi$. This gives already the claim as then by density $\Div_s(\Theta_{\gamma}\nabla^sf)$ can be uniquely extended to an element in $L^2(\Omega_T)$ such that
		\begin{equation}
			\label{eq: L2 estimate for cond op}
			\|\Div_s(\Theta_{\gamma}\nabla^sf)\|_{L^2(\Omega_T)}\leq C\|f\|_{L^2(0,T\,;H^{2s}(\R^n))}.
		\end{equation}
		Using \cite[Remark 8.8]{RZ2022unboundedFracCald} in every time slice, we obtain
		\begin{equation}
			\label{eq: regularity estimate}
			\begin{split}
				&\left|\int_0^T\langle \Div_s(\Theta_{\gamma}\nabla^sf),\varphi\rangle\,dt\right|\\
				=&\left|\int_0^T\langle \Theta_{\gamma}\nabla^sf,\nabla^s\varphi\rangle\,dt\right|\\
				=&\left|\int_0^T\langle (-\Delta)^{s/2}(\gamma^{1/2}f),(-\Delta)^{s/2}(\gamma^{1/2}\varphi)\rangle+\langle q_{\gamma}(\gamma^{1/2}f),\gamma^{1/2}\varphi\rangle\,dt\right|.
			\end{split}
		\end{equation}
		Now note that by \cite[Corollary~A.7]{RZ2022unboundedFracCald} we have $\gamma^{1/2}\varphi\in H^s(\R^n)$.

		On the other hand, choosing $p_1=\frac{n}{2s},s_1=4s,p_2=2,r_2=\frac{2n}{n-2s}$ as in \cite[Lemma~A.6]{RZ2022unboundedFracCald}, using the Sobolev embedding and the monotonicity of Bessel potential spaces, we deduce that $m_{\gamma}f\in H^{2s}(\R^n)$ with
		\begin{equation}
			\begin{split}
				&\|m_{\gamma}f\|_{H^{2s}(\R^n)}\\
				\leq & C\LC\|m_{\gamma}\|_{L^{\infty}(\R^n)}\|f\|_{H^{2s}(\R^n)}+\|f\|_{L^{\frac{2n}{n-2s}}(\R^n)}\|m_{\gamma}\|^{1/2}_{H^{4s,\frac{n}{2s}}(\R^n)}\|m_{\gamma}\|_{L^{\infty}(\R^n)}^{1/2}\RC\\
				\leq  & C\|m_{\gamma}\|_{L^{\infty}(\R^n)}\LC 1+\|m_{\gamma}\|^{1/2}_{H^{4s,\frac{n}{2s}}(\R^n)}\RC\|f\|_{H^{2s}(\R^n)}.
			\end{split}
		\end{equation}
		This in turn shows $\gamma^{1/2}f\in H^{2s}(\R^n)$ for a.e. $t\in (0,T)$ with
		\begin{equation}
			\label{eq: higher order estimate} 
			\|\gamma^{1/2}f\|_{H^{2s}(\R^n)}\leq \LC 1+\|m_{\gamma}\|_{L^{\infty}(\R^n)}\RC \LC 1+\|m_{\gamma}\|^{1/2}_{H^{4s,\frac{n}{2s}}(\R^n)}\RC \|f\|_{H^{2s}(\R^n)}.
		\end{equation}
		Additonally, by the Gagliardo--Nirenberg inequality in Bessel potential spaces and the Sobolev embedding (cf.~\cite[eq.~(18)]{StabilityFracCond}), we have
		\[
		\|m_{\gamma}\|_{H^{2s,n/s}(\R^n)}\leq C\|m_{\gamma}\|^{1/2}_{H^{4s,\frac{n}{2s}}(\R^n)}\|m_{\gamma}\|^{1/2}_{L^{\infty}(\R^n)}
		\]
		and thus $(-\Delta)^sm_{\gamma}\in L^{n/s}(\R^n)$. Applying H\"older's inequality with $p_1=n/s,p_2=\frac{2n}{n-2s},p_3=2$, we can estimate
		\begin{equation}
			\label{eq: potential estimate}
			\begin{split}
				&\|q_{\gamma}(\gamma^{1/2}f)\gamma^{1/2}\varphi\|_{L^1(\R^n)}\\
				\leq &\|q_{\gamma}\|_{L^{n/s}(\R^n)}\|\gamma^{1/2}f\|_{L^{\frac{2n}{n-2s}}(\R^n)}\|\gamma^{1/2}\varphi\|_{L^2(\R^n)}\\
				\leq &C\|m_{\gamma}\|_{H^{4s,\frac{n}{2s}}(\R^n)}^{1/2}\|m_{\gamma}\|_{L^{\infty}(\R^n)}\|\gamma^{1/2}f\|_{H^{s}(\R^n)}\|\varphi\|_{L^2(\R^n)}\\
				\leq & C\LC 1+\|m_{\gamma}\|_{H^{4s,\frac{n}{2s}}(\R^n)}^{1/2}\RC \|m_{\gamma}\|^2_{L^{\infty}(\R^n)}\LC 1+\|m_{\gamma}\|^{1/2}_{H^{2s,n/2s}(\R^n)}\RC \\
				&\qquad \cdot \|f\|_{H^s(\R^n)}\|\varphi\|_{L^2(\R^n)}
			\end{split}
		\end{equation}
		where in the third inequality we again used \cite[Corollary~A.7]{RZ2022unboundedFracCald}. Now using $\gamma^{1/2}f\in H^{2s}(\R^n),\gamma^{1/2}\varphi\in H^s(\R^n)$ and the estimates \eqref{eq: higher order estimate}, \eqref{eq: potential estimate}, we obtain by H\"older's inequality from \eqref{eq: regularity estimate} the bound
		\begin{equation}
			\label{eq: L2 bound}
			\begin{split}
				\left|\int_0^T\langle \Div_s(\Theta_{\gamma}\nabla^sf),\varphi\rangle\,dt\right|=&\left|\int_0^T\langle (-\Delta)^{s}(\gamma^{1/2}f),\gamma^{1/2}\varphi\rangle+\langle q_{\gamma}(\gamma^{1/2}f),\gamma^{1/2}\varphi\rangle\,dt\right|\\
				\leq & C\LC 1+\|\gamma\|_{L^{\infty}(\R^n_T)}\RC \LC 1+\|m_{\gamma}\|_{L^{\infty}(0,T\,;H^{4s,\frac{n}{2s}}(\R^n))}\RC\\
				&\quad \cdot \|f\|_{L^2(0,T\,;H^{2s}(\R^n))}\|\varphi\|_{L^2(\Omega_T)}.
			\end{split}
		\end{equation}
		
		On the other hand by definition there holds $u_0-f(0)\in \widetilde{H}^s(\Omega)$.
		Hence, if we set as above $\widetilde{u}=u-f$, then we see that it solves
		\eqref{eq: weak solutions of homogeneous conductivity equation existence} with 
		$\widetilde{u}_0\vcentcolon = u_0-f(0)\in \widetilde{H}^s(\Omega)$ and
		\begin{equation}
			\label{eq: regularity inhomo}
			\widetilde{F}\vcentcolon = F-\partial_tf-\Div_s(\Theta_{\gamma}\nabla^sf)\in L^2(\Omega_T).
		\end{equation}
		Now we proceed similarly as in \cite[Chapter~7, Theorem~5]{EvansPDE}. For this purpose let us recall how the unique solution $\widetilde{u}$ in \cite[Chapter~XVIII, Section~3.1-3.2, Theorem~1 and 2]{DautrayLionsVol5} is constructed. Since $\widetilde{H}^s(\Omega)$ is a separable Hilbert space, the finite dimensional subspaces 
		\[
		\widetilde{H}^s_m\vcentcolon =\text{span}\{w_1,\ldots,w_m\}
		\]
		for $m\in\N$, where $(w_k)_{k\in\N}\subset \widetilde{H}^s(\Omega)$ is an orthonormal basis of $\widetilde{H}^s(\Omega)$, form a Galerkin approximation for $\widetilde{H}^s(\Omega)$. Observe by density of $\widetilde{H}^s(\Omega)$ in $L^2(\Omega)$ the family $(\widetilde{H}^s_m)_{m\in\N}$ are also a Galerkin approximation for $L^2(\Omega)$. By \cite[Chapter~XVIII, Section~3.1-3.2, Lemma~1]{DautrayLionsVol5} there are unique solutions $\widetilde{u}_m\in C([0,T]\,;\widetilde{H}^s_m)$ with $\partial_t\widetilde{u}_m\in L^2(0,T\,;\widetilde{H}^s_m)$ and
		\begin{equation}
			\label{eq: approximate PDE}
			\langle \partial_t\widetilde{u}_m,w_j\rangle +B_{\gamma}(t\,;\widetilde{u}_m,w_j)=\langle \widetilde{F},w_j\rangle
		\end{equation}
		for all $1\leq j\leq m$ and a.e. $t\in (0,T)$, where $\widetilde{u}_0^m\in \widetilde{H}^s_m$ are chosen in such a way that $\widetilde{u}_0^m\to \widetilde{u}_0$ in $L^2(\Omega)$.

		In fact, the solutions $\widetilde{u}_m$ can be written in the form 
		\[
		\widetilde{u}_m=\sum_{j=1}^mc_m^jw_j.
		\]
		Here $c_m=(c_m^1,\ldots,c_m^m)$ are absolutely continuous functions and solve 
		\[
		A_m\partial_tc_m+B_m(t)c_m= \widetilde{F}_m(t),\quad c_m(0)=\widetilde{u}_0^m,
		\]
		where $A_m\vcentcolon =(\langle w_i,w_j\rangle)_{1\leq i,j\leq m}$, $B_m(t)\vcentcolon =(B_{\gamma}(t\,;w_i,w_j))_{1\leq i,j\leq m}$ and $\widetilde{F}_m(t)=(\langle \widetilde{F}(t),w_j\rangle)_{1\leq j\leq m}$. We have $\|\widetilde{u}_0^m\|_{L^2(\Omega)}\leq c\|\widetilde{u}_0\|_{L^2(\Omega)}$ for some constant independent of $m$. Next observe that if $\widetilde{u}_0\in \widetilde{H}^s(\Omega)$, as in our case, then we can take
		\[
		\widetilde{u}_0^m=\sum_{j=1}^m\langle \widetilde{u}_0,w_j\rangle w_j\in \widetilde{H}^s_m
		\]
		and see that $\widetilde{u}_0^m\to \widetilde{u}_0$ in $H^s(\R^n)$ as $m\to \infty$. Moreover, this convergence implies
		\begin{equation}
			\label{eq: initial estimate}
			\|\widetilde{u}_0^m\|_{H^s(\R^n)}\leq c\|\widetilde{u}_0\|_{H^s(\R^n)}
		\end{equation}
		for some $c>0$ independent of $m$. Now fix $m\in\N$, multiply \eqref{eq: approximate PDE} by $c_m^j$ and sum $j$ over $\{1,\ldots,m\}$ to obtain
		\begin{equation}
			\label{eq: basic equation control time derivative}
			\langle \partial_t\widetilde{u}_m,\partial_t\widetilde{u}_m\rangle_{L^2(\Omega)} +B_{\gamma}(t\,;\widetilde{u}_m,\partial_t\widetilde{u}_m)=\langle \widetilde{F},\partial_t\widetilde{u}_m\rangle_{L^2(\Omega)},
		\end{equation}
		where we used \eqref{eq: regularity inhomo} and $\partial_t\widetilde{u}_m\in\widetilde{H}^s_m\subset L^2(\Omega)$.

		Observe that there holds
		\[
		\begin{split}
			\partial_tB_{\gamma}(t\,;\widetilde{u}_m,\widetilde{u}_m)=&2B_{\gamma}(t\,;\widetilde{u}_m,\partial_t\widetilde{u}_m)+\int_{\R^{2n}}\left[(\partial_t\gamma^{1/2}(x,t))\gamma^{1/2}(y,t)\right.\\
			&\left.+\gamma^{1/2}(x,t)\partial_t\gamma^{1/2}(y,t))\nabla^s\widetilde{u}_m\cdot\nabla^s\widetilde{u}_m \right] dxdy.
		\end{split}
		\]
		Hence, using the uniform ellipticity of $\gamma$, the Cauchy--Schwartz inequality and Young's inequality show
		\[
		\begin{split}
			&\|\partial_t\widetilde{u}_m\|^2_{L^2(\Omega)} +\partial_tB_{\gamma}(t\,;\widetilde{u}_m,\widetilde{u}_m)\\
			\leq &  C\LC \|\partial_t\gamma\|_{L^{\infty}(\R^n_T)}\|\gamma\|_{L^{\infty}(\R^n_T)}^{1/2}\|\widetilde{u}_m\|_{H^s(\R^n)}^2+\epsilon^{-1}\|\widetilde{F}\|^2_{L^2(\Omega)}\RC +\epsilon\|\partial_t\widetilde{u}_m\|^2_{L^2(\Omega)}
		\end{split}
		\]
		for some $C>0$ only depending on the ellipticity constant $\gamma_0$ and all $\epsilon>0$. Taking $\epsilon =1/2$, we can absorb the last term on the left hand side and after integrating over $(0,t)\subset (0,T)$, we obtain
		\[
		\begin{split}
			&\|\partial_t\widetilde{u}_m\|_{L^2(\Omega_t)}^2+B_{\gamma}(t;\widetilde{u}_m,\widetilde{u}_m)\\
			\leq & B_{\gamma}(0;\widetilde{u}_m,\widetilde{u}_m)+C\LC \|\partial_t\gamma\|_{L^{\infty}(\R^n_T)}\|\gamma\|_{L^{\infty}(\R^n_T)}^{1/2}\|\widetilde{u}_m\|_{L^2(0,T\,;H^s(\R^n))}^2+\|\widetilde{F}\|^2_{L^2(\Omega_T)}\RC.
		\end{split}
		\]
		Taking the supremum over $(0,T)$, using the uniform ellipticity of $\gamma$ and \eqref{eq: initial estimate} we get 
		\begin{equation}
			\label{eq: time derivative estimate m}
			\begin{split}
				&\|\partial_t\widetilde{u}_m\|_{L^2(\Omega_T)}^2+\|\widetilde{u}_m\|^2_{L^{\infty}(0,T\,;H^s(\R^n))}\\
				\leq & C\left( \|\gamma\|_{L^{\infty}(\R^n_T)}\|\widetilde{u}_0\|^2_{H^s(\R^n)}\right.\\
				& \quad \left. +\|\partial_t\gamma\|_{L^{\infty}(\R^n_T)}\|\gamma\|_{L^{\infty}(\R^n_T)}^{1/2}\|\widetilde{u}_m\|_{L^2(0,T\,;H^s(\R^n))}^2+\|\widetilde{F}\|^2_{L^2(\Omega_T)}\right).
			\end{split}
		\end{equation}
		Now the term $\|\widetilde{u}_m\|_{L^2(0,T\,;H^s(\R^n))}$ can be bounded from above using the energy estimate 
		\begin{equation}
			\label{eq: energy estimate for m}
			\|\widetilde{u}_m\|_{L^{\infty}(0,T\,;L^2(\Omega))}^2+\|\widetilde{u}_m\|^2_{L^{2}(0,T\,;H^s(\R^n))}\leq C\LC \|\widetilde{u}_0\|^2_{L^2(\Omega)}+\|\widetilde{F}\|^2_{L^2(\Omega_T)}\RC
		\end{equation}
		(cf.~\cite[Chapter~XVIII, Section~3.2, eq.~(3.40)]{DautrayLionsVol5}) for some $C>0$ only depending $\gamma_0$. This then gives
		\[
		\begin{split}
			\|\partial_t\widetilde{u}_m\|_{L^{2}(\Omega_T)}^2+\|\widetilde{u}_m\|^2_{L^{\infty}(0,T\,;H^s(\R^n))}\leq C\LC \|\widetilde{u}_0\|^2_{H^s(\R^n)}+\|\widetilde{F}\|^2_{L^2(\Omega_T)}\RC
		\end{split}
		\]
		for some $C>0$ only depending on $\gamma$.

		By \cite[Chapter~XVIII, Section~3.3, Lemma~3]{DautrayLionsVol5} we know that up to subsequences there holds
		\begin{enumerate}[(i)]
			\item $\widetilde{u}_m\weak \widetilde{u}$ in $L^2(0,T\,;H^s(\R^n))$
			\item and $\widetilde{u}_m\weakstar \widetilde{u}$ in $L^{\infty}(0,T\,;L^2(\Omega))$
		\end{enumerate}
		as $m\to \infty$.
		But \cite[Chapter~7, Problem~6]{EvansPDE} then implies (up to extracting possibly a further subsequence) that $\partial_t\widetilde{u}\in L^2(\Omega_T)$ and $\widetilde{u}\in L^{\infty}(0,T\,;H^s(\R^n))$ with
		\[
		\begin{split}
			&\|\partial_t\widetilde{u}\|_{L^{2}(\Omega_T)}^2+\|\widetilde{u}\|^2_{L^{\infty}(0,T\,;H^s(\R^n))} \\
			\leq &C \LC \|\widetilde{u}_0\|^2_{H^s(\R^n)}+\|\widetilde{F}\|^2_{L^2(\Omega_T)}\RC \\
			\leq & C \left( \|u_0\|_{H^s(\R^n)}^2+\|f(0)\|^2_{H^s(\R^n)}+\|F\|^2_{L^2(\Omega_T)} \right. \\
			& \qquad \left. +\|\partial_t f\|_{L^2(\Omega_T)}^2+\|f\|_{L^2(0,T\,;H^{2s}(\R^n))}^2\right),
		\end{split}
		\]
		where $C>0$ only depends on $\gamma$. Here we finally used the definition of $\widetilde{u}_0$, $\widetilde{F}$ and \eqref{eq: L2 estimate for cond op}. This establishes the estimate \eqref{eq: bound time derivate} and we can conclude the proof.
	\end{proof}
	
	Because of this well-posedness result we make the following definition:
	
	\begin{definition}
		\label{def: data space}
		Let $\Omega\subset\R^n$ be an open set, $0<T<\infty$, $0<s<\min(1,n/2)$ and $\gamma_0>0$. Then we define the data spaces $X_s(\Omega_T)$, $\widetilde{X}_s(\Omega_T)$ and the class of admissible conductivities $\Gamma_{s,\gamma_0}(\R^n_T)$ by
		\[
		\begin{split}
			X_s(\Omega_T)\vcentcolon = &\,\{(f,u_0)\in L^2(0,T\,;H^{2s}(\R^n))\times H^s(\R^n)\,;\\
			&\quad\partial_tf\in L^2(0,T\,;L^2(\R^n)),\,u_0-f(0)\in\widetilde{H}^s(\Omega)\,\},\\
			\widetilde{X}_s(\Omega_T)\vcentcolon =&\,\left\{f\in L^2(0,T\,;H^{2s}(\R^n))\,;\partial_tf\in L^2(0,T\,;L^2(\R^n)),\,f(0)=0\right\},\\
		\end{split}
		\] 
		and 
		\begin{align}\label{Gamma s gamma0}
			\begin{split}
				\Gamma_{s,\gamma_0}(\R^n_T)\vcentcolon =&\,\left\{\gamma\in  C^1_tC_x(\R^n_T)\,; \, \gamma \text{ satisfies \eqref{ellipticity}},\,  \partial_t\gamma\in L^{\infty}(\R^n_T),\right.\\
				&\left. \quad \text{ and }\ m_{\gamma}\in C([0,T]\,;H^{4s+\varepsilon,\frac{n}{2s}}(\R^n))\quad\text{for some}\quad\varepsilon>0 \right\}.
			\end{split}
		\end{align}
		Here, the space $C^k_tC^{\ell}_x(\R^n_T)$, $k,\ell\in\N_0$, consists of all functions which are $k$-times continuously differentiable in the time variable $t$ and $\ell$-times in the space variable $x$.
	\end{definition}
	
	With this notation at hand, the above theorem can be rewritten as the following:
	\begin{corollary}
		\label{cor: well-posedness}
		Let $\Omega\subset\R^n$ be an open set, $0<T<\infty$, $0<s<\min(1,n/2)$ and $\gamma_0>0$. Then for all $(f,u_0)\in X_s(\Omega_T)$ and $\gamma\in \Gamma_{s,\gamma_0}(\R^n_T)$ there is a unique solution $u_{f,u_0}\in L^{\infty}(0,T\,;H^s(\R^n))$ with $\partial_tu_{f,u_0}\in L^2(\Omega_T)$ satisfying
		\begin{equation}
			\label{eq: corollary bound time derivate}
			\begin{split}
				&\|\partial_t (u_{f,u_0}-f)\|^2_{L^2(\Omega_T)}+\|u_{f,u_0}-f\|^2_{L^{\infty}(0,T\,;H^s(\R^n))}\\
				\leq & C\LC \|u_0\|_{H^s(\R^n)}^2+\|f(0)\|^2_{H^s(\R^n)}+\|\partial_t f\|_{L^2(\Omega_T)}^2+\|f\|_{L^2(0,T\,;H^{2s}(\R^n))}^2\RC ,
			\end{split}
		\end{equation}
		for some constant $C>0$ independent of $f,u_0$ and $u_{f,u_0}$.
	\end{corollary}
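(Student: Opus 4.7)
The plan is to derive Corollary \ref{cor: well-posedness} directly from Theorem \ref{thm: well-posedness of forward problem}(ii) applied with source term $F=0$, once I check that the compact packaging encoded in the spaces $X_s(\Omega_T)$ and $\Gamma_{s,\gamma_0}(\R^n_T)$ actually supplies every hypothesis required by part (ii) of the theorem. The claim of existence, uniqueness, and regularity of $u_{f,u_0}$ will then follow from the theorem with no further work, and the estimate \eqref{eq: corollary bound time derivate} will follow from \eqref{eq: bound time derivate} after noting that the $F$-term drops out.

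The main step is therefore a compatibility check. First, from the definition of $X_s(\Omega_T)$ I have $f\in L^2(0,T\,;H^{2s}(\R^n))$ and $\partial_t f\in L^2(0,T\,;L^2(\R^n))$, so in particular $f\in H^1(0,T\,;L^2(\R^n))\cap L^2(0,T\,;H^{2s}(\R^n))$, which matches the regularity requirement on $f$ in Theorem \ref{thm: well-posedness of forward problem}(ii). The condition $u_0\in H^s(\R^n)$ with $u_0-f(0)\in \widetilde{H}^s(\Omega)$ is also directly part of the definition of $X_s(\Omega_T)$.

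Next, I would verify the conductivity hypotheses. By definition of $\Gamma_{s,\gamma_0}(\R^n_T)$, the conductivity $\gamma$ satisfies the uniform ellipticity bound \eqref{ellipticity} and has $\partial_t\gamma\in L^{\infty}(\R^n_T)$. The only remaining item from the theorem's hypotheses is $m_{\gamma}\in L^{\infty}(0,T\,;H^{4s,\frac{n}{2s}}(\R^n))$, and this is supplied with room to spare by the continuous embedding $H^{4s+\varepsilon,\frac{n}{2s}}(\R^n)\hookrightarrow H^{4s,\frac{n}{2s}}(\R^n)$ combined with $C([0,T]\,;X)\hookrightarrow L^{\infty}(0,T\,;X)$.

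With the hypotheses matched, Theorem \ref{thm: well-posedness of forward problem}(ii) produces a unique $u_{f,u_0}\in L^{\infty}(0,T\,;H^s(\R^n))$ with $\partial_tu_{f,u_0}\in L^2(\Omega_T)$, and \eqref{eq: corollary bound time derivate} is obtained by setting $F=0$ in \eqref{eq: bound time derivate} and discarding the vanishing norm. There is no real obstacle here — the statement is essentially a reformulation of Theorem \ref{thm: well-posedness of forward problem}(ii) in the compact notation that the rest of the paper will use.
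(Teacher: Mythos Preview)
Your proposal is correct and matches the paper's own proof, which simply says the corollary is an immediate consequence of Theorem~\ref{thm: well-posedness of forward problem} with $F=0$. You have merely spelled out the hypothesis-matching in more detail than the paper does, and all of your checks are valid.
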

	\begin{proof}
		This is an immediate consequence of Theorem~\ref{thm: well-posedness of forward problem} by taking $F=0$.
	\end{proof}

	With the well-posedness at hand, we can define the DN map \eqref{DN map in intro}, which was introduced in Section \ref{sec: introduction}, rigorously. Similarly as in the nonlocal elliptic case (see \cite{XavierNeumannBdry} or Appendix \ref{sec: Discussion of nonlocal normal derivatives and DN maps}) we define:
	
	\begin{definition}[The DN map]\label{def: the DN map}
		Let $\Omega\subset \R^n$ be an open set bounded in one direction, $0<T<\infty$, $0<s<\min(1,n/2)$, $\gamma_0>0$ and $\gamma\in \Gamma_{s,\gamma_0}(\R^n_T)$. Then we define the DN map $\Lambda_{\gamma}$ by
		\begin{equation}
			\label{eq: modified DN map}
			\begin{split}
				\langle \Lambda_{\gamma}f,g\rangle\vcentcolon =&\int_0^T B_{\gamma}(u_f,g)\,dt\\
				=&\frac{C_{n,s}}{2}\int_0^T\int_{\R^{2n}}\gamma^{1/2}(x,t)\gamma^{1/2}(y,t)\\
				&\qquad \qquad \quad \cdot \frac{(u_f(x,t)-u_f(y,t))(g(x,t)-g(y,t))}{|x-y|^{n+2s}}\,dxdydt
			\end{split}
		\end{equation}
		for all $f,g\in C_c^{\infty}((\Omega_e)_T)$, where $u_f$ is the unique solution of \eqref{main eq nonlocal diff}. 
	\end{definition}

	\section{Exterior determination} \label{sec: exterior det}
	The main goal of this section is to prove Theorem \ref{Theorem: Exterior determination}. We first establish an energy estimate which allows us to deduce that the Dirichlet energies of suitable special solutions concentrate in the exterior. 
	\begin{comment}
		We first give a modified definition of exterior DN map, which includes compared to the DN map $\mathcal{N}_{\gamma}$ an additional term (for a discussion see Appendix \ref{sec: Discussion of nonlocal normal derivatives and DN maps}) and helps to prove our exterior determination result. 
	\end{comment}

	\begin{lemma}
		\label{lemma: elliptic/parabolic estimate} 
		Suppose that $W \subset \Omega_e$ is an open nonempty set with finite measure and $\text{dist}(W,\Omega) > 0$. Let $u_f$ be the unique solution to \eqref{eq: weak solutions of conductivity equation existence} with $f \in C_c^\infty(W_T)$, $F\equiv0$ and $u_0\equiv0$. Then
		\begin{equation}
			\label{eq: parabolic energy estimate}
			\begin{split}
				&\|u_f-f\|_{L^{\infty}(0,T\,;L^2(\Omega))}+\|u_f-f\|_{L^2(0,T\,;H^s(\R^n))}\leq C\|f\|_{L^2(W_T)},
			\end{split}
		\end{equation}
		where the constant $C>0$ does not depend on $f \in C_c^\infty(W_T)$.
	\end{lemma}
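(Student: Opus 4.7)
My plan is to invoke the energy estimate \eqref{eq: energy estimate} from Theorem~\ref{thm: well-posedness of forward problem}(i) and then show that, under the present hypotheses, its right-hand side collapses to $C\|f\|_{L^2(W_T)}^2$. Since $u_0\equiv 0$, $F\equiv 0$, and $f\in C_c^\infty(W_T)$ implies $f(\cdot,0)\equiv 0$, the terms $\|u_0\|_{L^2(\Omega)}$, $\|f(0)\|_{L^2(\Omega)}$, and $\|F\|_{L^2(0,T;H^{-s}(\Omega))}$ are all zero. What remains is to bound $\|\partial_t f\|_{L^2(0,T;H^{-s}(\Omega))}$ and $\|\Div_s(\Theta_\gamma\nabla^s f)\|_{L^2(0,T;H^{-s}(\Omega))}$ by $C\|f\|_{L^2(W_T)}$.

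For the time-derivative term, I would use that $\mathrm{dist}(W,\Omega)=:d_0>0$ forces $\overline{W}\cap\overline{\Omega}=\emptyset$. Since any $\varphi\in\widetilde{H}^s(\Omega)$ vanishes a.e. on $\R^n\setminus\overline\Omega$ by definition of $\widetilde H^s(\Omega)$, and $\partial_tf(\cdot,t)$ is supported in $W$, the pairing $\langle\partial_tf(t),\varphi\rangle_{H^{-s}(\Omega)\times\widetilde{H}^s(\Omega)}$ is given by an $L^2$-integral of $\partial_tf(t)\varphi$ over $W$, which vanishes. Hence $\|\partial_tf\|_{L^2(0,T;H^{-s}(\Omega))}=0$ and this term drops.

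The nonlocal term needs more care, and this is the heart of the argument. For $\varphi\in\widetilde H^s(\Omega)$ the supports of $f(\cdot,t)$ and $\varphi$ are disjoint, so $(f(x,t)-f(y,t))(\varphi(x)-\varphi(y))=-f(x,t)\varphi(y)-f(y,t)\varphi(x)$. After symmetrizing in $(x,y)$ the bilinear form becomes
\[
B_\gamma(t;f,\varphi)=-C_{n,s}\int_{W\times\Omega}\gamma^{1/2}(x,t)\gamma^{1/2}(y,t)\frac{f(x,t)\varphi(y)}{|x-y|^{n+2s}}\,dxdy.
\]
On $W\times\Omega$ one has $|x-y|\geq d_0$, so Cauchy--Schwarz in $y$ gives the uniform bound $\int_\Omega|x-y|^{-2(n+2s)}dy\leq C(n,s)d_0^{-n-4s}$ for $x\in W$. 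Another application of H\"older's inequality in $x\in W$ together with $|W|<\infty$ yields $|B_\gamma(t;f,\varphi)|\leq C\|f(t)\|_{L^2(W)}\|\varphi\|_{L^2(\Omega)}\leq C\|f(t)\|_{L^2(W)}\|\varphi\|_{H^s(\R^n)}$. Taking the supremum over $\varphi$ and integrating in time gives $\|\Div_s(\Theta_\gamma\nabla^s f)\|_{L^2(0,T;H^{-s}(\Omega))}\leq C\|f\|_{L^2(W_T)}$, and inserting everything into \eqref{eq: energy estimate} produces \eqref{eq: parabolic energy estimate}.

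The main subtlety is the gain from $H^s$ down to $L^2$ in the $\Div_s$ estimate: it is not a consequence of operator boundedness but a genuine consequence of disjointness — the singular kernel $|x-y|^{-n-2s}$ is tamed by the distance $d_0$, the finite measure of $W$ converts $L^1(W)$ bounds into $L^2(W)$ bounds, and the vanishing of $\varphi$ on $W$ eliminates all other contributions to the bilinear form.
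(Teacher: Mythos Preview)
Your proposal is correct and follows essentially the same route as the paper: both apply the energy estimate \eqref{eq: energy estimate}, observe that the $\partial_tf$-term vanishes by disjoint supports, and reduce everything to the bound $\|\Div_s(\Theta_\gamma\nabla^s f)(t)\|_{H^{-s}(\Omega)}\leq C\|f(t)\|_{L^2(W)}$. The paper simply cites this last estimate from \cite[Proof of Lemma~3.1]{RZ2022LowReg}, whereas you supply the direct kernel argument (disjointness $\Rightarrow |x-y|\ge d_0$, integrability of $|z|^{-2(n+2s)}$ over $\{|z|\ge d_0\}$, and $|W|<\infty$); your version is thus a self-contained rendering of exactly the argument the paper outsources.
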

	\begin{proof}
		By applying the energy estimate \eqref{eq: energy estimate} in Theorem~\ref{thm: well-posedness of forward problem}, we obtain
		\[
		\begin{split}
			&\|u_f-f\|_{L^{\infty}(0,T\,;L^2(\Omega))}^2+\|u_f-f\|_{L^2(0,T\,;H^s(\R^n))}^2\\
			\leq & C\LC \|\partial_tf\|^2_{L^2(0,T\,;H^{-s}(\Omega))} +\|\Div_s(\Theta_{\gamma}\nabla^sf)\|^2_{L^2(0,T\,;H^{-s}(\Omega))}\RC .
		\end{split}
		\]
		Since, $f$ is compactly supported in $W_T\subset (\Omega_e)_T$ the first contribution in the above estimate is zero. By \cite[Proof of Lemma~3.1]{RZ2022LowReg} there holds 
		\[
		\|\Div_s(\Theta_{\gamma}\nabla^sf)(t)\|_{H^{-s}(\Omega)}\leq C\|f(t)\|_{L^2(W)}
		\]
		for a.e. $t\in (0,T)$ and some $C>0$ only depending on $n,s,W$ and $\|\gamma\|_{L^{\infty}(\R^n_T)}$. Hence, there holds
		\[
		\begin{split}
			&\|u_f-f\|_{L^{\infty}(0,T\,;L^2(\Omega))}^2+\|u_f-f\|_{L^2(0,T\,;H^s(\R^n))}^2\leq C\|f\|^2_{L^2(W_T)}
		\end{split}
		\]
		and we can conclude the proof.
	\end{proof}

	\begin{proof}[Proof of Theorem \ref{Theorem: Exterior determination}] 
		First, let $\gamma$ denote either of the two diffusion coefficients $\gamma_1$ or $\gamma_2$. Using the Sobolev embedding, we may assume that $\gamma\in C_b(W_T)$. By \cite[Lemma~5.5]{CRZ2022global}, for any $x_0 \in W$, there exists $(\phi_N)_{N \in \N} \subset C_c^\infty(W)$ such that $\|\phi_N\|_{H^s(\R^n)}=1$, $\|\phi_N\|_{L^2(\R^n)}\to 0$ as $N\to\infty$ and $\text{supp}(\phi_N)\to \{x_0\}$. Moreover, \cite[Proposition~1.5]{CRZ2022global} implies that
		$B_{\gamma(\cdot,t_0)}(\phi_N,\phi_N) \to \gamma(x_0,t_0)$ as $N\to\infty$ for any $t_0\in (0,T)$. Next let $\eta \in C_c^\infty((0,T))$ and define $\Phi_N \vcentcolon = \eta\phi_N \in C_c^\infty(W_T)$. It follows that
		\begin{equation}
			\int_0^T B_{\gamma}(\Phi_N,\Phi_N)\, dt = \int_0^T \eta^2(t)B_{\gamma}(\phi_N,\phi_N)\, dt.
		\end{equation}
		%	Since
		%	\begin{equation}
			%		\abs{\eta^2(t)B_{\gamma}(\Phi_N,\Phi_N)} \leq |\eta(t)|^2\norm{\gamma}_{L^\infty(\R^n_T)}\norm{\phi_N}_{H^s(\R^n)}^2\leq C|\eta(t)|^2 \in L^1((0,T)),
			%	\end{equation} 
		%	and $\abs{\eta^2B_{\gamma}(\Phi_N,\Phi_N)}\to \eta^2(t)\gamma(x_0,t)$ for all $t\in (0,T)$, 
		By the dominated convergence theorem we obtain
		\begin{equation}\label{eq: exterior reconstruction formula}
			\lim_{N\to \infty}\int_0^T B_{\gamma}(\Phi_N,\Phi_N)\, dt = \int_0^T \eta^2(t)\gamma(x_0,t)\, dt.
		\end{equation}
		
		Let us now consider the solutions $u_N$ to the equation 
		\begin{align}\label{eq:u_f modified DN map}
			\begin{cases}
				\partial_tu +\Div_s(\Theta_{\gamma}\nabla^s u)=0 & \text{ in }\Omega_T,\\
				u=\Phi_N & \text{ in }(\Omega_e)_T,\\
				u(x,0)=0 &\text{ in }\Omega,
			\end{cases}
		\end{align}
		for $N\in\N$. By the definition of the DN map \eqref{eq: modified DN map}, we have
		\begin{equation}
			\label{eq:splitting of energy}
			\begin{split}
				\left\langle\Lambda_{\gamma}\Phi_N,\Phi_N \right\rangle= &\int_{0}^TB_{\gamma}(u_N,\Phi_N)\,dt\\
				=&\int_{0}^TB_{\gamma}(u_N-\Phi_N,\Phi_N)\,dt+\int_{0}^TB_{\gamma}(\Phi_N,\Phi_N)\,dt
			\end{split}
		\end{equation}
		Next, note that Lemma \ref{lemma: elliptic/parabolic estimate} implies \begin{equation}
			\begin{split}
				\abs{\int_0^T B_{\gamma}(u_N-\Phi_N,\Phi_N)dt} &\leq C\int_0^T \norm{(u_N-\Phi_N)(\cdot,t)}_{H^s(\R^n)}\norm{\Phi_N(\cdot,t)}_{H^s(\R^n)}dt\\
				&\leq C\left(\int_0^T\norm{(u_N-\Phi_N)(\cdot,t)}_{H^s(\R^n)}^2dt\right)^{1/2} \\
				&\leq C\norm{\Phi_N}_{L^2(0,T;L^2(\R^n))}\\
				&=C\|\eta\|_{L^2((0,T))}\|\phi_N\|_{L^2(W)},
			\end{split}
		\end{equation}
		and hence there holds
		\begin{equation}
			\label{eq: limit remainder}
			\lim_{N \to \infty}\int_0^T B_{\gamma}(u_N-\Phi_N,\Phi_N)\,dt=0.
		\end{equation} 

		We obtain from \eqref{eq: exterior reconstruction formula},  \eqref{eq:splitting of energy} and \eqref{eq: limit remainder} that
		\begin{equation}
			\label{eq: weak squared reconstruction formula}
			\lim_{N\to \infty} \left\langle\Lambda_{\gamma}\Phi_N,\Phi_N \right\rangle = \int_0^T \eta^2(t)\gamma(x_0,t)\, dt.
		\end{equation}
		Hence, applying the identity \eqref{eq: weak squared reconstruction formula} to $\gamma= \gamma_1$ and $\gamma=\gamma_2$, and subtracting them, with \eqref{same DN in thm 1} at hand, we deduce
		\begin{equation}
			\label{eq: identity}
			\int_0^T (\gamma_1(x_0,t)-\gamma_2(x_0,t))\eta\, dt=0
		\end{equation}
		for all $\eta\in C_c^{\infty}((0,T))$ with $\eta\geq 0$. This implies $\gamma_1(x_0,t)\geq \gamma_2(x_0,t)$ a.e. Interchanging the role of $\gamma_1$ and $\gamma_2$, we also obtain the reversed inequality and deduce by continuity that $\gamma_1(x_0,t)=\gamma_2(x_0,t)$ for all $t\in (0,T)$.
		%	Using the relation $fg=\frac{1}{4}\LC (f+g)^2-(f-g)^2\RC$, choosing $f,g \in C_c^\infty((0,T))$ such that $g=1$ in $\supp(f)$, and setting $\eta_1=f+g$ and $\eta_2=f-g$ in \eqref{eq: weak squared reconstruction formula}, we see that the assumption $\left. \Lambda_{\gamma_1}\Phi_N \right|_{W_T}=\left. \Lambda_{\gamma_2}\Phi_N \right|_{W_T}$, $\Phi_N \in C_c^\infty(W_T)$ and \eqref{eq: weak squared reconstruction formula} imply
		%	\begin{equation}
			%		\int_0^T f(t)(\gamma_1(x_0,t)-\gamma_2(x_0,t))\, dt=0.
			%	\end{equation}
		%	for all $f \in C_c^\infty((0,T))$. Therefore by continuity of $\gamma_1,\gamma_2$ there holds  $\gamma_1(x_0,t)=\gamma_2(x_0,t)$ for all $t \in (0,T)$. 
		%This implies that $\gamma_1(p,\tau)=\gamma_2(p,\tau)$ by continuity and choosing $x_0=p$. Since $(p,\tau)$ is an arbitrary point from a set with full measure, we readily obtain that 
		Since this construction can be done for any $x_0\in W$, we have $\gamma_1=\gamma_2$ in $W_T$.
	\end{proof}
	\begin{remark} 
		%The proof of Theorem \ref{Theorem: Exterior determination} is constructive upon choosing a sequence of $\psi_N$ in \eqref{eq: weak squared reconstruction formula} such that $\psi_N^2 \to \delta_{\tau}$ and setting $x_0=p$. 
		Note that we also obtain a Lipschitz stability estimate for the exterior determination problem with partial data as in the elliptic case \cite{CRZ2022global}. Moreover, one can easily observe that in contrast to the DN map $\Lambda_{\gamma}$ the new DN map $\mathcal{N}_{\gamma}$, which is defined in Section~\ref{sec: new DN maps}, satisfies $\lim_{N\to\infty}\langle \mathcal{N}_{\gamma}\Phi_N,\Phi_N\rangle =0$.
	\end{remark}

	\section{The spacetime Liouville reduction}
	\label{sec: DN and Liouville}
	
	In this section, we derive the spacetime Liouville reduction.

	\begin{lemma}[Auxiliary Lemma]
		\label{useful lemma}
		Let $\Omega\subset \R^n$ be an open set bounded in one direction, $0<T<\infty$, $0<s<\min(1,n/2)$, and $V\subset \Omega_e$ a nonempty open set. Assume that $\gamma \in L^\infty(\R^n_T)$ with background deviation $m_{\gamma}\in L^{\infty}(0,T\,;H^{2s,\frac{n}{2s}}(\R^n))$ satisfies $\gamma \geq \gamma_0>0$ for some positive constant $\gamma_0$. Then the following assertions hold:
		\begin{enumerate}[(i)]
			\item\label{assertion 1} For any $\psi\in L^2(0,T\,;\widetilde{H}^s(V))$, we have $\gamma^{1/2}\psi, \gamma^{-1/2}\psi \in L^2(0,T\,;\widetilde{H}^s(V))$ and there holds
			\begin{equation}
				\label{estimate 1}
				\begin{split}
					\|\gamma^{1/2}\psi\|_{L^2(0,T\,;H^s(\R^n))}&\lesssim \LC 1+\|m_{\gamma}\|_{L^{\infty}(\R^n_T)}+\|m_{\gamma}\|_{L^{\infty}(0,T\,;H^{2s,\frac{n}{2s}}(\R^n))}\RC \\
					&\qquad\cdot\|\psi\|_{L^2(0,T\,;H^s(\R^n))}
				\end{split}
			\end{equation}
			and
			\begin{equation}
				\label{estimate 2}
				\begin{split}
					\|\gamma^{-1/2}\psi\|_{L^2(0,T\,;H^s(\R^n))}&\lesssim \left( 1+\|m_{\gamma}\|_{L^{\infty}(\R^n_T)}+\|m_{\gamma}\|_{L^{\infty}(0,T\,;H^{2s,\frac{n}{2s}}(\R^n))}\right.\\
					&\quad \left. \quad +\|m_{\gamma}\|_{L^{\infty}(0,T\,;H^{2s,\frac{n}{2s}}(\R^n))}^{2s}\right)\|\psi\|_{L^2(0,T\,;H^s(\R^n))}.
				\end{split}
			\end{equation}
			\item\label{assertion 2} Let $u,\varphi\in L^2(0,T\,;H^s(\R^n))$. Then there holds
			\begin{equation}
				\label{eq: useful identity}
				\begin{split}
					\int_{t_1}^{t_2}\langle \Theta_\gamma \nabla ^s u, \nabla ^s \varphi \rangle_{L^2(\R^{2n})}\,dt
					&= \int_{t_1}^{t_2}\langle  (-\Delta)^{s/2}(\gamma^{1/2}u) , (-\Delta)^{s/2}(\gamma^{1/2}\varphi) \rangle_{L^2(\R^n)}\,dt\\
					&\quad +\int_{t_1}^{t_2} \langle  q_\gamma \gamma^{1/2}u,\gamma^{1/2}\varphi \rangle_{L^2(\R^{n})} \,dt
				\end{split}
			\end{equation}
			for all $0\leq t_1<t_2\leq T$.
		\end{enumerate}
	\end{lemma}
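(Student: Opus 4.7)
The plan is to reduce both assertions to known elliptic-case results, applied pointwise in the time variable and then integrated or extended by density in time. For the first assertion, I would write $\gamma^{1/2} = 1 + m_\gamma$ and $\gamma^{-1/2} = 1 + (\gamma^{-1/2} - 1)$, where $\gamma^{-1/2} - 1 = -m_\gamma/\gamma^{1/2}$ is smooth in $m_\gamma$ by the uniform ellipticity \eqref{ellipticity}. The $\gamma^{1/2}$ case then reduces to the pointwise-in-time multiplier estimate \cite[Corollary~A.7]{RZ2022unboundedFracCald}, giving
\[
\|m_\gamma(t)\psi(t)\|_{H^s(\R^n)} \lesssim \LC 1 + \|m_\gamma(t)\|_{L^\infty(\R^n)} + \|m_\gamma(t)\|_{H^{2s,n/(2s)}(\R^n)}\RC \|\psi(t)\|_{H^s(\R^n)};
\]
squaring and integrating over $t \in (0,T)$ yields \eqref{estimate 1}. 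Since multiplication by a bounded function cannot enlarge the essential support, $\gamma^{\pm 1/2}\psi(\cdot,t)$ remains supported in $\overline{V}$, and a routine mollification/density argument places $\gamma^{\pm 1/2}\psi$ in $L^2(0,T;\widetilde{H}^s(V))$.

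For the $\gamma^{-1/2}$ bound \eqref{estimate 2}, the extra exponent $2s$ suggests controlling the nonlinear map $m \mapsto (1+m)^{-1} - 1$ in $H^{2s,n/(2s)}(\R^n)$ via a fractional composition estimate of Bourdaud--Sickel type; here the smoothness of the composing function on the relevant bounded range of $m_\gamma$ (guaranteed by uniform ellipticity) is essential. After this composition estimate, one applies \cite[Corollary~A.7]{RZ2022unboundedFracCald} once more to the resulting product with $\psi$, and integrates in time.

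For the second assertion, I would invoke the pointwise-in-time fractional Liouville reduction already used in \eqref{eq: regularity estimate}, see \cite[Remark~8.8]{RZ2022unboundedFracCald}: for fixed $t$ and $u(t), \varphi(t) \in H^s(\R^n)$ there holds
\[
\langle \Theta_\gamma(t)\nabla^s u(t), \nabla^s \varphi(t)\rangle_{L^2(\R^{2n})} = \langle (-\Delta)^{s/2}(\gamma^{1/2}u)(t), (-\Delta)^{s/2}(\gamma^{1/2}\varphi)(t)\rangle_{L^2(\R^n)} + \langle q_\gamma(t)\gamma^{1/2}u(t), \gamma^{1/2}\varphi(t)\rangle_{L^2(\R^n)}.
\]
Integrability of each term over $(t_1,t_2)$ follows from Lemma~\ref{lem: continuity} for the left hand side, and from \eqref{estimate 1} together with the boundedness $(-\Delta)^{s/2}\colon H^s(\R^n) \to L^2(\R^n)$ and the $L^{n/s}$ control on $q_\gamma$ (as in \eqref{eq: potential estimate}) for the right hand side. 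A Fubini-type integration in time then delivers \eqref{eq: useful identity}.

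The main obstacle is the quantitative bound \eqref{estimate 2}: one must justify the specific exponent $2s$ by a nonlinear composition rule in the Bessel potential space $H^{2s,n/(2s)}(\R^n)$ combined with interpolation. Every other step is a direct application of the corresponding elliptic result in each time slice together with a Fubini argument, so this composition estimate is really the only nonroutine ingredient.
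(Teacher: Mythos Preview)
Your proposal is correct and follows essentially the same route as the paper: decompose $\gamma^{1/2}=1+m_\gamma$ and $\gamma^{-1/2}=1-m_\gamma/(1+m_\gamma)$, apply \cite[Corollary~A.7]{RZ2022unboundedFracCald} slicewise for the product estimate, invoke a composition result in $H^{2s,n/(2s)}$ for the nonlinear piece (the paper cites \cite{AdamsComposition} and \cite[Proof of Theorem~8.6]{RZ2022unboundedFracCald}, which is your ``Bourdaud--Sickel type'' estimate), and use the slicewise Liouville identity \cite[Remark~8.8]{RZ2022unboundedFracCald} for part~(ii). The only place to be careful is the $\widetilde{H}^s(V)$ membership: the support observation alone is insufficient for an arbitrary open $V$, and the paper's density argument mollifies $m_\gamma$ (not $\psi$) while approximating $\psi$ by $C_c^\infty(V)$ functions, so that the approximants $(m_\gamma\ast\rho_{\epsilon_k})\psi_k$ are genuinely compactly supported in $V$---this is presumably what you intend by ``routine mollification/density,'' but it is worth making explicit.
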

	
	\begin{proof}
		(i): First we show that $\gamma^{1/2}\psi\in L^2(0,T\,;\widetilde{H}^s(V))$ for any $\psi\in L^2(0,T\,;\widetilde{H}^s(V))$. Decomposing $\gamma^{1/2}\psi$ as $m_{\gamma}\psi+\psi$, we deduce from \cite[Corollary~A.7]{RZ2022unboundedFracCald} that there holds
		\begin{equation}
			\label{eq: estimate for product}
			\begin{split}
				&\quad \|\gamma^{1/2}\psi\|^2_{L^2(0,T\,;H^s(\R^n))}\\
				&\leq C\LC \|m_{\gamma}\psi\|^2_{L^2(0,T\,;H^s(\R^n))}+\|\psi\|^2_{L^2(0,T\,;H^s(\R^n))}\RC\\
				&\leq  C\int_0^T\LC \|m_{\gamma}\|^2_{L^{\infty}(\R^n)}+\|m_{\gamma}\|_{H^{2s,n/2s}(\R^n)}\|m_{\gamma_i}\|_{L^{\infty}(\R^n)}\RC\|\psi\|^2_{H^s(\R^n)}\,dt\\
				&\quad +C\|\psi\|^2_{L^2(0,T\,;H^s(\R^n))}\\
				&\leq C \LC 1+\|m_{\gamma}\|_{L^{\infty}(\R^n_T)}^2+\|m_{\gamma}\|_{L^{\infty}(0,T\,;H^{2s,\frac{n}{2s}}(\R^n))}\RC \|\psi\|^2_{L^2(0,T\,;H^s(\R^n))}.
			\end{split}
		\end{equation}
		Hence, we have $\gamma^{1/2}\psi\in L^2(0,T\,;H^s(\R^n))$.

		Next, recall that if $T>0$ and $X$ is a Banach space with dense subset $X_0$, then $C_c^{\infty}((0,T))\otimes X_0$ is dense in $L^2(0,T\,;X)$. Let $(\rho_{\epsilon})_{\epsilon>0}\subset C_c^{\infty}(\R^n)$ be a standard mollifier and choose a sequence $(\psi_k)_{k\in\N}\subset C_c^{\infty}((0,T))\otimes C_c^{\infty}(V)$ such that $\psi_k\to \psi$ in $L^2(0,T\,;\widetilde{H}^s(V))$. The sequence $(\gamma^{1/2}\ast \rho_{\epsilon_k})\psi_k$, $k\in\N$, belongs to $L^2(0,T\,;\widetilde{H}^s(V))$, where $\epsilon_k\to 0$ as $k\to\infty$. Hence, if we can show that $(\gamma_i^{1/2}\ast \rho_{\epsilon_k})\psi_k\to \gamma_i^{1/2}\psi$ in $L^2(0,T\,;H^s(\R^n))$ as $k\to \infty$, then it follows that $\gamma^{1/2}\psi\in L^2(0,T\,;\widetilde{H}^s(V))$. We can estimate
		\begin{equation}\label{some estimeate in 5.1}
			\begin{split}
				&\|\gamma^{1/2}\psi-(\gamma^{1/2}\ast\rho_{\epsilon_k})\psi_k\|_{L^2(0,T\,;H^s(\R^n))}\\
				\leq & \|m_{\gamma}\psi-m_{\gamma}^k\psi_k\|_{L^2(0,T\,;H^s(\R^n))}+\|\psi-\psi_k\|_{L^2(0,T\,;H^s(\R^n))}\\
				\leq & \|(m_{\gamma}-m_{\gamma}^k)\psi\|_{L^2(0,T\,;H^s(\R^n))}+\|m_{\gamma}^k(\psi-\psi_k)\|_{L^2(0,T\,;H^s(\R^n))}\\
				& +\|\psi-\psi_k\|_{L^2(0,T\,;H^s(\R^n))},
			\end{split}
		\end{equation}
		where we have set $m_{\gamma}^k= m_{\gamma_i}\ast \rho_{\epsilon_k}$.

		Now, for the second term in the right hand side of \eqref{some estimeate in 5.1}, we can apply the estimate \eqref{eq: estimate for product}, but all the terms involving $m_{\gamma}^k$ are uniformly bounded for $k\in \N$ by using the Young's inequality and the fact that Bessel potentials commute with convolution. Hence, the second and third term go to zero as $k\to\infty$. For the first term in the right hand side of \eqref{some estimeate in 5.1}, we observe that by \cite[Corollary~A.7]{RZ2022unboundedFracCald}, there holds $(m_{\gamma}-m_{\gamma}^k)\psi\to 0$ in $H^s(\R^n)$ as $k\to\infty$ for a.e. $t\in (0,T)$ and
		\begin{align}
			&\|(m_{\gamma}-m_{\gamma}^k)\psi\|_{H^s(\R^n)}\\
			\leq & C\LC\|m_{\gamma}\|_{L^{\infty}(\R^n)}+\|m_{\gamma}\|_{H^{2s,n/2s}(\R^n)}^{1/2}\|m_{\gamma}\|_{L^{\infty}(\R^n)}^{1/2}\RC \|\psi\|_{H^s(\R^n)},
		\end{align}
		for a.e. $t\in (0,T)$. 
		
		With the above estimate at hand, let us use Young's inequality again and the Bessel potentials commute with convolution. Since, $m_{\gamma}\in L^{\infty}(0,T;H^{2s,\frac{n}{2s}}(\R^n))$ the term in brackets is uniformly bounded in $t$ and thus Lebesgue's dominated convergence theorem implies that $(m_{\gamma}-m_{\gamma}^k)\psi\to 0$ in $L^2(0,T\,;H^s(\R^n))$ as $k\to\infty$. Therefore, the assertion follows. 
		
		Similarly, one can prove $\gamma^{-1/2}\psi\in L^2(0,T\,;\widetilde{H}^s(V))$ for any $\psi\in L^2(0,T\,;\widetilde{H}^s(V))$. Indeed, it essentially follows from the decomposition $\gamma^{-1/2}=1-\frac{m_{\gamma}}{m_{\gamma}+1}$ and the fact that the second term has exactly the same regularity properties as $m_{\gamma}$. More concretely, from \cite[Proof of Theorem~8.6]{RZ2022unboundedFracCald} and \cite[p.~156]{AdamsComposition} it follows that
		\[
		\begin{split}
			&\left\|\frac{m_{\gamma}}{m_{\gamma}+1}\right\|_{L^{\infty}(0,T\,;H^{2s,\frac{n}{2s}}(\R^n))}
			\\
			\leq & C\left(\|m_{\gamma}\|_{L^{\infty}(0,T\,;H^{2s,\frac{n}{2s}}(\R^n))}+\|m_{\gamma}\|_{L^{\infty}(0,T\,;H^{2s,\frac{n}{2s}}(\R^n))}^{2s}\right),
		\end{split}
		\]
		and hence we can repeat the above argument by using the smooth approximation the function $\frac{m_{\gamma}^k}{m_{\gamma}^k+1}$ this time. Thus, we conclude that $\gamma^{-1/2}\psi\in L^2(0,T\,;\widetilde{H}^s(V))$ for all $\psi\in L^2(0,T\,;\widetilde{H}^s(V))$.\\
		
		\noindent (ii): Note that due to our regularity assumptions we can apply \cite[Lemma 4.1]{CRZ2022global} or \cite[Remark 8.8]{RZ2022unboundedFracCald} in every time slice, to obtain
		\begin{align}\label{DN relations from Liouville}
			\begin{split}
				\langle \Theta_\gamma \nabla ^s u, \nabla ^s \varphi \rangle_{L^2(\R^{2n})}     
				=&\,\langle  (-\Delta)^{s/2}(\gamma^{1/2}u) , (-\Delta)^{s/2}(\gamma^{1/2}\varphi) \rangle_{L^2(\R^n)}\\
				&-\langle (-\Delta)^{s/2}m_\gamma, (-\Delta)^{s/2}(\gamma^{1/2}u\varphi) \rangle_{L^2(\R^n)}   \\
				=&\,\langle  (-\Delta)^{s/2}(\gamma^{1/2}u) , (-\Delta)^{s/2}(\gamma^{1/2}\varphi) \rangle_{L^2(\R^n)}\\
				&+ \langle  q_\gamma \gamma^{1/2}u,\gamma^{1/2}\varphi \rangle_{L^2(\R^{n})},
			\end{split}
		\end{align}
		for a.e. $t\in (0,T)$ and all $u,\varphi\in L^2(0,T\,;H^s(\R^n))$, where $m_\gamma$ and $q_\gamma$ are the functions defined by  \eqref{m_gamma} and \eqref{eq: space-time potential}, respectively. Finally, note that by the properties of the fractional Laplacian, the fact that $(u,v)\,\mapsto q_{\gamma}uv$ is bilinear and bounded as a map from $L^2(0,T\,;H^s(\R^n))\times L^2(0,T\,;H^s(\R^n))$ to $L^1(0,T\,; L^1(\R^n))$ (cf.~\cite[Corollary~A.11]{RZ2022unboundedFracCald}) and the assertion \ref{assertion 1} all terms appearing in the above identity are in $L^1((0,T))$.
	\end{proof}
	
	Now, we are ready to introduce the Liouville reduction.
	
	\begin{theorem}[Fractional spacetime Liouville reduction]
		\label{thm:fractionalLiouvilleReduction}
		Let $\Omega\subset \R^n$ be an open set bounded in one direction, $0<T<\infty$, $0<s<\min(1,n/2)$, $\gamma_0>0$ and $\gamma\in \Gamma_{s,\gamma_0}(\R^n_T)$.
		\begin{enumerate}[(i)]
			\item\label{item 1 Liouville reduction} If $F\in L^2(\Omega_T)$, $(f,u_0)\in X_s(\Omega_T)$ and $u$ is the unique solution to \eqref{eq: weak solutions of conductivity equation existence}, then $v\vcentcolon = \gamma^{1/2}u\in H^1(0,T\,;L^2(\Omega))\cap L^2(0,T\,;H^s(\R^n))$ solves 
			\begin{align}
				\label{eq: weak solutions of Schrodinger equation existence}
				\begin{cases}
					\partial_t\LC \gamma^{-1}v\RC +\LC (-\Delta)^s+Q_{\gamma}\RC v= G  & \text{ in }\Omega_T,\\
					v= g  & \text{ in } (\Omega_e)_T,\\
					v(0) =v_0 & \text{ in } \Omega,
				\end{cases}
			\end{align}
			with $G=\gamma^{-1/2}F\in L^2(\Omega_T)$, $(g,v_0)=(\gamma^{1/2}f,\gamma^{1/2}u_0)\in X_s(\Omega_T)$ and
			\begin{equation}\label{eq: space-time potential}
				Q_{\gamma}=q_{\gamma}-\frac{\partial_t\gamma}{2\gamma^{2}}\quad\text{with}\quad q_{\gamma}=-\frac{(-\Delta)^sm_\gamma}{\gamma^{1/2}}.
			\end{equation}
			\item\label{item 2 Liouville reduction} For all $G\in L^2(\Omega_T)$, $(g,v_0)\in X_s(\Omega_T)$, there is a unique solution $v\in H^1(0,T\,;L^2(\Omega))\cap L^2(0,T\,;H^s(\R^n))$ to \eqref{eq: weak solutions of Schrodinger equation existence}. Moreover, it is given by $v=\gamma^{-1/2}u$, where $u$ is the solution to \eqref{eq: weak solutions of Schrodinger equation existence} with $F=\gamma^{1/2}G$, $f=\gamma^{-1/2}g$ and $u_0=\gamma^{-1/2}v_0$.
		\end{enumerate}
	\end{theorem}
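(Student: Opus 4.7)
The idea is to perform the change of variables $v=\gamma^{1/2}u$ directly in the weak formulation, using Lemma~\ref{useful lemma}\eqref{assertion 2} to convert the fractional conductivity bilinear form into a fractional Schr\"odinger one, and the time-derivative product rule to absorb the $t$-dependence of $\gamma$ into the zeroth order coefficient. The admissibility of the transformed test functions is provided by Lemma~\ref{useful lemma}\eqref{assertion 1}; this is precisely why $\Gamma_{s,\gamma_0}(\R^n_T)$ is defined so that $m_\gamma\in C([0,T]\,;H^{4s+\varepsilon,\frac{n}{2s}}(\R^n))$.

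For part \eqref{item 1 Liouville reduction}, I would first check the regularity and the exterior/initial data of $v$. Since $u\in L^\infty(0,T\,;H^s(\R^n))$ with $\partial_t u\in L^2(\Omega_T)$ by Corollary~\ref{cor: well-posedness}, and $\gamma\in C^1_tC_x(\R^n_T)$ with $\partial_t\gamma\in L^\infty(\R^n_T)$, a product-rule estimate yields $v\in H^1(0,T\,;L^2(\Omega))$, while Lemma~\ref{useful lemma}\eqref{assertion 1} gives $v\in L^2(0,T\,;H^s(\R^n))$ and $v-g\in L^2(0,T\,;\widetilde{H}^s(\Omega))$ with $g=\gamma^{1/2}f$; the initial condition $v(0)=\gamma^{1/2}(0)u_0=v_0$ is inherited from $u(0)=u_0$. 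I would then test the weak formulation \eqref{eq: simplified definition 2} against $\varphi=\gamma^{-1/2}\psi$ with $\psi\in C_c^\infty(\Omega_T)$, which is admissible by Lemma~\ref{useful lemma}\eqref{assertion 1}. Identity \eqref{eq: useful identity} with this choice, so that $\gamma^{1/2}\varphi=\psi$, converts the spatial bilinear form into
\begin{equation}
\int_0^T B_\gamma(t\,;u,\gamma^{-1/2}\psi)\,dt=\int_0^T\!\!\int_{\R^n}\bigl[(-\Delta)^{s/2}v\,(-\Delta)^{s/2}\psi+q_\gamma v\,\psi\bigr]\,dx\,dt.
\end{equation}

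For the time term I would apply the classical product rule to $\gamma^{-1}v=\gamma^{-1/2}u$, justified by $\partial_t\gamma\in L^\infty$, which yields $\gamma^{-1/2}\partial_t u=\partial_t(\gamma^{-1}v)+\frac{\partial_t\gamma}{2\gamma^2}v$ (after re-expressing $u=\gamma^{-1/2}v$). Inserting this and the transformed spatial contribution into $\gamma^{-1/2}\partial_t u+\gamma^{-1/2}\Div_s(\Theta_\gamma\nabla^s u)=\gamma^{-1/2}F$ and collecting the coefficient of $v$ produces the weak form of \eqref{eq: weak solutions of Schrodinger equation existence} with source $G=\gamma^{-1/2}F$ and zeroth order coefficient $Q_\gamma$ as in \eqref{eq: space-time potential}.

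For part \eqref{item 2 Liouville reduction}, the argument is by inverting the substitution. Given $(G,g,v_0)$ in the hypothesized class, Lemma~\ref{useful lemma}\eqref{assertion 1} shows that $(F,f,u_0):=(\gamma^{1/2}G,\gamma^{-1/2}g,\gamma^{-1/2}v_0)$ lies in the data class required by Corollary~\ref{cor: well-posedness}; let $u$ be the resulting unique solution. Then $v:=\gamma^{1/2}u$ solves \eqref{eq: weak solutions of Schrodinger equation existence} by part \eqref{item 1 Liouville reduction}. For uniqueness, two solutions $v_1,v_2$ of \eqref{eq: weak solutions of Schrodinger equation existence} with identical data produce $u_i:=\gamma^{-1/2}v_i$, which by the same computation performed in reverse (again admissible thanks to Lemma~\ref{useful lemma}\eqref{assertion 1}) satisfy \eqref{eq: weak solutions of conductivity equation existence} with the same data, so Corollary~\ref{cor: well-posedness} forces $u_1=u_2$ and hence $v_1=v_2$. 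The main obstacle is not algebraic but functional-analytic: every multiplication by $\gamma^{\pm 1/2}$ must preserve the relevant test-function and solution spaces, and this is exactly the content of Lemma~\ref{useful lemma}\eqref{assertion 1}; once those mapping properties and the pointwise-in-time identity \eqref{eq: useful identity} are available, the whole reduction collapses to bookkeeping in Bochner-space calculus.
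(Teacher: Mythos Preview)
Your proposal is correct and follows essentially the same route as the paper: both arguments rest on Lemma~\ref{useful lemma}\ref{assertion 1} for the mapping properties of multiplication by $\gamma^{\pm1/2}$, the slicewise Liouville identity~\eqref{eq: useful identity} for the spatial bilinear form, a product-rule computation for the time derivative, and testing against $\varphi=\gamma^{-1/2}\psi$; part~\ref{item 2 Liouville reduction} is likewise obtained in both by inverting the substitution and invoking uniqueness for the conductivity equation. The only cosmetic difference is that the paper works with the weak formulation~\eqref{eq: solutions exterior value problem nonlocal heat eq} (test functions in $C_c^{\infty}(\Omega\times[0,T))$, time derivative on the test function) and introduces the auxiliary space $W_s(\Omega_T)$ to justify that $\gamma^{-1/2}\psi$ is an admissible test function, whereas you use the equivalent formulation~\eqref{eq: simplified definition 2} with the strong time derivative $\partial_t u\in L^2(\Omega_T)$ (available by Corollary~\ref{cor: well-posedness}) and verify the initial condition separately via the trace; both are equally valid.
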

	
	\begin{proof}
		(i): Note that $G\in L^2(\Omega_T)$ and by Lemma~\ref{useful lemma}, we have $g\in L^2(0,T\,;H^s(\R^n))$, $v_0\in H^s(\R^n)$ and $v-g\in L^2(0,T\,;\widetilde{H}^s(\Omega))$. By the assumptions on $f,\gamma,u$, we have $\partial_t(\gamma^{1/2}f)\in L^2(\R^n_T)$ and $\partial_tv\in L^2(\Omega_T)$. 
		%Next we observe that $\gamma^{1/2},\gamma^{-1/2}\in C([0,T]\,;L^{\frac{n}{2s}}(V))$ for all bounded set $V\Subset\R^n$. By the regularity assumptions on $\gamma^{1/2}$, we may calculate
		%\[
		%\gamma^{1/2}(x,t)-\gamma^{1/2}(x,s)=\int_s^t\partial_\tau (\gamma^{1/2}(x,\tau))\,d\tau
		%\]
		%for $0\leq s<t\leq T$ and a.e. $x\in\R^n$. This shows
		%\[
		%\begin{split}
		%	\|\gamma^{1/2}(t)-\gamma^{1/2}(s)\|_{L^{\frac{n}{2s}}(V)}\leq |V|^{2s/n}\|\partial_t\gamma\|_{L^{\infty}(\R^n_T)}|t-s|
		%\end{split}
		%\]
		%for $0\leq s<t\leq T$. Hence, using the elementary estimate 
		%$$
		%\|\gamma^{1/2}\|_{L^{\infty}(0,T\,;L^{\frac{n}{2s}}(V))}\leq  |V|^{2s/n}\|\gamma\|_{L^{\infty}(\R^n_T)}^{1/2},
		%$$ 
		%we can deduce that $\gamma^{1/2}\in %C^{\alpha}([0,T]\,;L_{loc}^{\frac{n}{2s}}(\R^n))$ for any $0<\alpha\leq 1$. By \cite[Lemma~4.1]{StabilityFracCond} it follows that
		%\[
		%\begin{split}
		%	\|\gamma^{-1/2}\|_{L^{\infty}(0,T\,;L^{\frac{n}{2s}}(\R^n))}&\leq |V|^{2s/n}/\gamma_0^{1/2},\\
		%	[\gamma^{-1/2}]_{C^{0,\alpha}([0,T]\,;L^{\frac{n}{2s}}(\R^n))}&\leq \frac{[\gamma^{1/2}]_{C^{0,\alpha}([0,T]\,;L^{\frac{n}{2s}}(\R^n))}}{\gamma_0}
		%\end{split}
		%\]
		%and thus $\gamma^{-1/2}\in C^{0,\alpha}([0,T]\,;L_{loc}^{\frac{n}{2s}}(\R^n))$ for any $0<\alpha\leq 1$. This implies that $(0,T)\ni t\mapsto \gamma^{1/2}(t), \,\gamma^{-1/2}(t)\in L_{loc}^{\frac{n}{2s}}(\R^n)$ can be continuously extend to $[0,T]$. 
		This implies that 
		$$
		v_0-g(0)=\gamma^{1/2}(0)(u_0-f(0))\in \widetilde{H}^s(\Omega).
		$$
		Therefore, we have shown that $(g,v_0)\in X_s(\Omega_T)$.

		Arguing as above we have $v$ has the same regularity properties as $u$. Thus, it remains to prove that $v$ solves \eqref{eq: weak solutions of Schrodinger equation existence}. By definition $u$ satisfies
		\begin{equation}
			\label{eq: u solves cond eq}
			-\int_{\Omega_T}u\partial_t\varphi\,dxdt+\int_0^T B_{\gamma}(t\,;u,\varphi)\,dt=\int_{\Omega_T} F\varphi\,dxdt+\int_{\Omega}u_0(x)\varphi(x,0)\,dx
		\end{equation}
		for all $\varphi\in C_c^{\infty}(\Omega\times [0,T))$.  Observe, as in the case $s=1$ (cf.~\cite[Lemma~4.12]{LadyzhenskajaParabolic}), the space $C_c^{\infty}(\Omega\times [0,T))$ is dense in 
		\[
		W_s(\Omega_T)\vcentcolon = \{\,\varphi\in L^2(0,T\,;\widetilde{H}^s(\Omega))\,;\,\partial_t\varphi \in L^2(\Omega_T)\,\text{and}\,\varphi(T)=0\,\},
		\]
		which is endowed with the natural norm 
		\[
		\|u\|_{W_s(\Omega_T)}^2\vcentcolon = \|\partial_tu\|_{L^2(\Omega_T)}^2+\|u\|_{L^2(0,T\,;H^s(\R^n))}^2.
		\]
		In fact, this can easily seen by using a cutoff function in time as in \cite[Exercise~8.8]{Brezis}, and using the density of $C_c^{\infty}((0,T))\otimes C_c^{\infty}(\Omega)$ in $L^2(0,T\,;\widetilde{H}^s(\Omega))$. 
		
		Now as we proved above, the space $W_s(\Omega_T)$ is invariant under multiplication with either $\gamma^{1/2}$ or $\gamma^{-1/2}$. Moreover, we have
		\begin{equation}
			\label{eq: time derivative}
			\begin{split}
				u\partial_t\varphi=&\frac{1}{\gamma^{1/2}}(\gamma^{1/2}u)\partial_t\frac{\gamma^{1/2}\varphi}{\gamma^{1/2}}\\
				=&\frac{1}{\gamma}(\gamma^{1/2}u)\partial_t(\gamma^{1/2}\varphi)-\frac{1}{2\gamma^{2}}(\gamma^{1/2}u)(\gamma^{1/2}\varphi)\partial_t\gamma,
			\end{split}
		\end{equation}
		for all $\varphi\in W_s(\Omega_T)$. Hence, using the (space) Liouville reduction (see \cite[Remark~8.8]{RZ2022unboundedFracCald}) in every time slice for all $\varphi\in W_s(\Omega_T)$, the identity \eqref{eq: u solves cond eq} implies
		\begin{equation}
			\label{eq: slicewise Liouville}
			\begin{split}
				&-\int_{\Omega_T}u\partial_t\varphi\,dxdt+\int_0^T \langle (-\Delta)^{s/2}(\gamma^{1/2}u),(-\Delta)^{s/2}(\gamma^{1/2}\varphi)\rangle\,dt \\
				&+\int_{\Omega_T}q_{\gamma}(\gamma^{1/2}u)(\gamma^{1/2}\varphi)\,dxdt\\
				=&\int_{\Omega_T} F\varphi\,dxdt +\int_{\Omega}\frac{\gamma^{1/2}(x,0)u_0(x)\gamma^{1/2}(x,0)\varphi(x,0)}{\gamma(x,0)}\,dx.
			\end{split}
		\end{equation}
		Inserting \eqref{eq: time derivative}, this shows
		\begin{equation}
			\label{eq: inserting time derivative}
			\begin{split}
				&-\int_{\Omega_T}\frac{(\gamma^{1/2}u)\partial_t(\gamma^{1/2}\varphi)}{\gamma}\,dxdt+\int_0^T \langle (-\Delta)^{s/2}(\gamma^{1/2}u),(-\Delta)^{s/2}(\gamma^{1/2}\varphi)\rangle\,dt\\
				&+\int_{\Omega_T}Q_{\gamma}(\gamma^{1/2}u)(\gamma^{1/2}\varphi)\,dxdt
				\\=&\int_{\Omega_T} (\gamma^{-1/2}F)(\gamma^{1/2}\varphi)\,dxdt\\
				& +\int_{\Omega}\frac{\gamma^{1/2}(x,0)u_0(x)\gamma^{1/2}(x,0)\varphi(x,0)}{\gamma(x,0)}\,dx
			\end{split}
		\end{equation}
		Hence, choosing $\varphi=\gamma^{-1/2}\psi$ with $\psi\in W_s(\Omega_T)$, we see that $v$ satisfies
		\begin{equation}
			\label{eq: replacing test function}
			\begin{split}
				&-\int_{\Omega_T}\gamma^{-1}v\partial_t\psi\,dxdt+\int_0^T \langle (-\Delta)^{s/2}v,(-\Delta)^{s/2}\psi\rangle\,dt+\int_{\Omega_T}Q_{\gamma}v\psi\,dxdt\\
				=&\int_{\Omega_T} G\psi\,dxdt +\int_{\Omega}\frac{v_0(x)\psi(x,0)}{\gamma(x,0)}\,dx, \\
			\end{split}
		\end{equation}
		for all $\psi\in W_s(\Omega_T)$. Therefore $v$ is a solution of \eqref{eq: weak solutions of Schrodinger equation existence} as claimed.\\
		
		\noindent (ii): Existence and uniqueness of solutions in $H^1(0,T\,;L^2(\Omega))\cap L^2(0,T\,;H^s(\R^n))$ easily follows from (i) by choosing the data $F,f,u_0$ appropriately and observing that $\gamma^{-1/2}$ has precisely the same regularity properties as $\gamma^{1/2}$. In fact, assume that $v\in H^1(0,T\,;L^2(\Omega))\cap L^2(0,T\,;H^s(\R^n))$ solves \eqref{eq: weak solutions of Schrodinger equation existence}. Then as in (i), we deduce $u\vcentcolon = \gamma^{-1/2}v\in H^1(0,T\,;L^2(\Omega))\cap L^2(0,T\,;H^s(\R^n))$, $F\vcentcolon = \gamma^{1/2}G\in L^2(\Omega)$ and $(f,u_0)\vcentcolon = (\gamma^{1/2}g,\gamma^{1/2}v_0)\in X_s(\Omega_T)$.

		By the definition, $v$ solves \eqref{eq: replacing test function} for any $\psi\in W_s(\Omega_T)$. Replacing $\psi$ by $\gamma^{1/2}\varphi$ and inserting these definitions of $F,u_0$ and $f$, we get \eqref{eq: inserting time derivative}. Plugging the identity \eqref{eq: time derivative} and using the slicewise Liouville reduction, we see that $u$ solves
		\begin{equation}
			\label{eq: exist schroed eq}
			\begin{cases}
				\partial_t u +\Div_s(\Theta_{\gamma}\nabla^s u)= F  & \text{ in }\Omega_T,\\
				u= f  & \text{ in } (\Omega_e)_T,\\
				u(0) =u_0 & \text{ in } \Omega.
			\end{cases}
		\end{equation}
		Now, Theorem~\ref{thm: well-posedness of forward problem} gives the existence of such a solution $u$ and by (i) the function $v$ solves \eqref{eq: weak solutions of Schrodinger equation existence}. On the othere hand if $v_1,v_2$ are two solutions of \eqref{eq: weak solutions of Schrodinger equation existence}, then arguing as above we see that $u_i\vcentcolon = \gamma^{-1/2}v_i$ for $i=1,2$ solve \eqref{eq: exist schroed eq}. Since solutions to the nonlocal diffustion equation \eqref{eq: exist schroed eq} are unique, we deduce that $u_1=u_2$ and thus $v_1=v_2$.
	\end{proof}

	Next we want to prove the well-posedness for the diffusion equation derived by the Liouville reduction and its adjoint equation under the milder assumption $G\in L^2(0,T\,;H^{-s}(\Omega))$ but $g\in C_c^{\infty}((\Omega_e)_T)$. Moreover, we will see that $u$ is the solution to \eqref{eq: weak solutions of conductivity equation existence} if and only if $v$ is the unique solution to \eqref{eq: weak solutions of Schrodinger equation existence} of the form $v=\gamma^{-1/2}u$.
	
	%	Next we want to show well-posedness for both \eqref{eq: weak solutions of Schrodinger equation existence} and its adjoint problem when $G=v_0=0$ and $g\in C_c^{\infty}((\Omega_e)_T)$. For this we will make use of the following definition:
	
	\begin{definition}
		\label{def: time reversal}
		If $u\in L^1_{loc}(V_T)$ for some open set $V\subset\R^n$, then we set
		\begin{equation}
			\label{eq: time reversal}
			u^{\ast}(x,t)\vcentcolon = u(x,T-t)
		\end{equation}
		for all $(x,t)\in V_T$.
	\end{definition}
	
	\begin{proposition}[Well-posedness]\label{prop: solution schroedinger eqs}
		Let $\Omega\subset \R^n$ be an open set bounded in one direction, $0<T<\infty$, $0<s<\min(1,n/2)$, $\gamma_0>0$ and $\gamma\in  \Gamma_{s,\gamma_0}(\R^n_T)$. If $g\in C_c^{\infty}((\Omega_e)_T)$ and $G\in L^2(0,T; \, H^{-s}(\Omega))$, then there exist unique solutions $v$, $v^\ast \in L^2(0,T\,;H^s(\R^n))$ with $\partial_tv$ and  $\p_t v^\ast \in L^2(0,T, ;  H^{-s}(\Omega))$
		%\footnote{P: You mean $\partial_t v\in L^2(0,T, ; (\wt H^s(\Omega))^{\ast})$?}
		of
		\begin{align}\label{Liouv reduced Schrodinger equation}
			\begin{cases}
				\partial_t(\gamma^{-1}v) +\LC (-\Delta)^s+Q_\gamma\RC v= G  & \text{ in }\Omega_T,\\
				v= g  & \text{ in } (\Omega_e)_T,\\
				v(0) =0 & \text{ in } \Omega,
			\end{cases}
		\end{align}
		and 
		\begin{align}\label{Liouv reduced Schrodinger equation adjoint}
			\begin{cases}
				-\gamma^{-1}\partial_tv^\ast  +\LC (-\Delta)^s+Q_\gamma\RC v^\ast = G  & \text{ in }\Omega_T,\\
				v^\ast = g  & \text{ in } (\Omega_e)_T,\\
				v^\ast(T) =0 & \text{ in } \Omega,
			\end{cases}
		\end{align}
		respectively. Here $Q_\gamma$ is the function \eqref{eq: space-time potential} given by the Liouville reduction.
	\end{proposition}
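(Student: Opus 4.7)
The plan is to prove the two well-posedness claims by reducing them to Theorem~\ref{thm: well-posedness of forward problem}. For the forward equation \eqref{Liouv reduced Schrodinger equation}, given the data $(g,G)$, I would set $f := \gamma^{-1/2} g$ and $F := \gamma^{1/2} G$. Since $g \in C_c^\infty((\Omega_e)_T)$ and $\gamma^{-1/2}$ has the multiplier properties of Lemma~\ref{useful lemma}, one obtains $f \in L^2(0,T\,;H^s(\R^n))$ with $\partial_t f \in L^2(0,T\,;H^{-s}(\R^n))$, still supported in $(\Omega_e)_T$. By duality, Lemma~\ref{useful lemma} also shows $F \in L^2(0,T\,;H^{-s}(\Omega))$: for $\varphi \in \widetilde{H}^s(\Omega)$, one has $\langle F,\varphi\rangle = \langle G,\gamma^{1/2}\varphi\rangle$, and multiplication by $\gamma^{1/2}$ is bounded on $\widetilde{H}^s(\Omega)$ with a norm controlled by $\|m_\gamma\|_{L^{\infty}(0,T\,;H^{2s,n/2s}(\R^n))}$. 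Applying Theorem~\ref{thm: well-posedness of forward problem} (i) produces a unique $u \in L^{\infty}(0,T\,;L^2(\Omega)) \cap L^2(0,T\,;H^s(\R^n))$ with $\partial_t u \in L^2(0,T\,;H^{-s}(\Omega))$ solving the conductivity equation with data $(F,f,0)$.

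Setting $v := \gamma^{1/2} u$, the regularity $v \in L^2(0,T\,;H^s(\R^n))$ follows again from Lemma~\ref{useful lemma}, and one recovers $\partial_t v \in L^2(0,T\,;H^{-s}(\Omega))$ via the distributional product rule
\[
\langle \partial_t v, \varphi\rangle = \langle \partial_t u, \gamma^{1/2}\varphi\rangle + \int_{\Omega_T} u\, \varphi\, \partial_t \gamma^{1/2}\,dxdt,
\]
whose right-hand side is controlled because $\partial_t \gamma \in L^\infty(\R^n_T)$ by $\gamma \in \Gamma_{s,\gamma_0}(\R^n_T)$ and because multiplication by $\gamma^{1/2}$ preserves $\widetilde{H}^s(\Omega)$. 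To verify that $v$ solves \eqref{Liouv reduced Schrodinger equation}, I would redo the weak-form computation carried out in the proof of Theorem~\ref{thm:fractionalLiouvilleReduction} (i): substitute $\gamma^{-1/2}\psi$ as test function in the weak formulation of the conductivity equation and apply the slicewise Liouville identity \eqref{DN relations from Liouville} from Lemma~\ref{useful lemma}, but now at the lower regularity level. Uniqueness is immediate by running the transformation backwards: any $v$ solving \eqref{Liouv reduced Schrodinger equation} yields a $u = \gamma^{-1/2} v$ solving the conductivity equation, which is unique by Theorem~\ref{thm: well-posedness of forward problem}.

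For the adjoint equation \eqref{Liouv reduced Schrodinger equation adjoint}, the plan is to exploit time reversal. Setting $w(x,t) := v^\ast(x,T-t)$ and introducing $\hat{\gamma}(x,t) := \gamma(x,T-t)$, $\hat{G}(x,t) := G(x,T-t)$, $\hat{g}(x,t) := g(x,T-t)$, the equation \eqref{Liouv reduced Schrodinger equation adjoint} is equivalent to
\[
\hat{\gamma}^{-1} \partial_t w + ((-\Delta)^s + Q_\gamma(\cdot,T-t)) w = \hat{G} \text{ in } \Omega_T, \quad w(0) = 0, \quad w = \hat{g} \text{ in } (\Omega_e)_T.
\]
Rewriting $\hat{\gamma}^{-1} \partial_t w = \partial_t(\hat{\gamma}^{-1} w) + \hat{\gamma}^{-2}(\partial_t \hat{\gamma}) w$ converts this into an equation of the same structural form as \eqref{Liouv reduced Schrodinger equation}, but with potential $Q_\gamma(\cdot,T-t)$ shifted by the bounded term $\hat{\gamma}^{-2}\partial_t \hat{\gamma} \in L^{\infty}(\R^n_T)$. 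Since this amounts only to a bounded perturbation of the potential, the argument for the forward equation applies verbatim: reduce back through Liouville, apply Theorem~\ref{thm: well-posedness of forward problem} to $\hat{\gamma}$ (still in $\Gamma_{s,\gamma_0}(\R^n_T)$ since time reversal preserves the class), and undo the time reversal to recover $v^\ast$ with the stated regularity.

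The main obstacle is the mismatch between the hypothesis $G \in L^2(\Omega_T)$ used in Theorem~\ref{thm:fractionalLiouvilleReduction} and the weaker hypothesis $G \in L^2(0,T\,;H^{-s}(\Omega))$ here: one must carefully justify that all pairings appearing in the weak formulation remain well-defined, which is exactly where Lemma~\ref{useful lemma} and its dual version are essential. A secondary subtlety is that \eqref{Liouv reduced Schrodinger equation adjoint} is not mapped exactly to \eqref{Liouv reduced Schrodinger equation} under $t \mapsto T-t$ because of the $\partial_t\gamma$ term hidden in $Q_\gamma$; absorbing the mismatch into a modified bounded potential as above is the cleanest way around this, but one must then verify that the Liouville-type reduction still works with this extra zero-order term, which reduces to a check that the bilinear form $B_{q_{\hat{\gamma}}}(t\,;\cdot,\cdot) + (\hat{\gamma}^{-2}\partial_t\hat{\gamma}\,\cdot,\cdot)_{L^2}$ is bounded and uniformly coercive over $\widetilde{H}^s(\Omega)$, which follows from Lemma~\ref{lem: continuity} and Proposition~\ref{thm:PoincUnboundedDoms}.
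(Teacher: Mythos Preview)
Your proposal is correct and follows essentially the same route as the paper: existence for \eqref{Liouv reduced Schrodinger equation} is obtained by undoing the Liouville transform and invoking Theorem~\ref{thm: well-posedness of forward problem} for the conductivity equation, and the adjoint equation \eqref{Liouv reduced Schrodinger equation adjoint} is handled by time reversal after absorbing the $\partial_t\gamma$ mismatch into a bounded zero-order perturbation of the potential.

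The one genuine difference is in the uniqueness step. You obtain uniqueness by pushing any solution $v$ back through the Liouville map to a solution $u=\gamma^{-1/2}v$ of the conductivity equation and invoking the uniqueness part of Theorem~\ref{thm: well-posedness of forward problem}. The paper instead gives a direct energy argument: testing the homogeneous Schr\"odinger-type equation against $\widetilde v$ itself, using the identity
\[
\int_\Omega \partial_t(\gamma^{-1}\widetilde v)\,\widetilde v\,dx
=\tfrac{1}{2}\,\partial_t\!\int_\Omega \gamma^{-1}|\widetilde v|^2\,dx
+\int_\Omega |\widetilde v|^2\,\gamma^{-1/2}\partial_t(\gamma^{-1/2})\,dx,
\]
and closing with Gronwall's inequality. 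Your argument is shorter and reuses more machinery; the paper's is more self-contained and makes no appeal to the inverse Liouville reduction at the lower regularity $\partial_t v\in L^2(0,T;H^{-s}(\Omega))$, which in your approach must be justified (as you correctly note) via Lemma~\ref{useful lemma} and its dual.
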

	
	\begin{proof}

		Let us prove the uniqueness of solutions to \eqref{Liouv reduced Schrodinger equation}. Suppose that $v_1,v_2$ are solutions of \eqref{Liouv reduced Schrodinger equation}, and consider $\wt v:=v_1 -v_2$, then $\wt v$ is the solution to 
		\begin{align}\label{wt v}
			\begin{cases}
				\p_t (\gamma^{-1} \wt v) + \LC (-\Delta)^s+ Q_\gamma \RC \wt v= 0 & \text{ in }\Omega_T,\\
				\wt v= 0 & \text{ in } (\Omega_e)_T,\\
				\wt v(0) =0 & \text{ in } \Omega.
			\end{cases}
		\end{align}
		Multiplying \eqref{wt v} by $\wt v$, then it is not hard to see 
		\begin{align}\label{uni 1}
			\int_{\Omega} \p_t (\gamma^{-1}\wt v) \wt v \, dx + \int_{\R^n}|(-\Delta)^{s/2}\wt v|^2\, dx +\int_{\Omega}   Q_\gamma |\wt v|^2 \, dx =0,  
		\end{align}
		where the first integral has to be understood as the duality pairing between $\widetilde{H}^s(\Omega)$ and $H^{-s}(\Omega)$.
		Meanwhile, notice that the first term of the above identity can be expressed as 
		\begin{align}\label{uni 2}
			\int_{\Omega} \p_t (\gamma^{-1}\wt v) \wt v \, dx= \frac{\p_t}{2}\int_{\Omega} \gamma^{-1}|\wt v|^2\, dx+\int_{\Omega}|\wt v|^2 \gamma^{-1/2}\p_t(\gamma^{-1/2})\, dx.
		\end{align}
		We next plug \eqref{uni 2} into \eqref{uni 1}, which give rises to 
		\begin{align}\label{uni 3}
			\begin{split}
				& \frac{\p_t}{2}\int_{\Omega} \gamma^{-1}|\wt v|^2\, dx + \int_{\R^n}|(-\Delta)^{s/2}\wt v|^2\, dx  \\
				=& -\int_{\Omega} Q_\gamma |\wt v|^2 \, dx -\int_{\Omega} \gamma^{-1/2}\p_t(\gamma^{-1/2})|\wt v|^2\, dx
				\\
				\leq &C \int_{\Omega}\gamma^{-1}|\wt v|^2 \, dx,
			\end{split}
		\end{align}
		for a constant $C>0$ independent of $\wt v$, where we used that $\gamma\in C^1_tC_x(\R^n_T)$ is uniformly elliptic with $\partial_t \gamma\in L^{\infty}(\R^n_T)$. 
		
		%		On the other hand, by the Poincar\'e inequality for $\Omega$, which is an open set bounded in one direction (see \cite[Theorem 2.2]{RZ2022unboundedFracCald}), it is known that there exists a constant $C>0$ such that 
		%		\begin{align}\label{Poincare}
			%			\norm{\gamma^{-1/2}\wt v}_{L^2(\Omega)}\leq C\norm{\wt v}_{L^2(\Omega)}\leq  C\norm{(-\Delta)^{s/2}\wt v}_{L^2(\R^n)},
			%		\end{align}
		%		where we used the ellipticity condition \eqref{ellipticity} for $\gamma$.
		%		Thus, combining \eqref{uni 3} and \eqref{Poincare}, we obtain that
		Thus, we obtain
		\begin{align}
			\p_t\norm{\gamma^{-1/2}\wt v}_{L^2(\Omega)}^2 \leq & C\left(\p_t\int_{\Omega} \gamma^{-1}|\wt v|^2\, dx + \int_{\R^n}|(-\Delta)^{s/2}\wt v|^2\, dx\right) \\
			\leq & C \norm{\gamma^{-1/2}\wt v}_{L^2(\Omega)}^2, 
		\end{align}
		and the Gronwall's inequality implies that 
		\[
		\norm{\gamma^{-1/2}(\cdot, t)\wt v(\cdot, t)}^2_{L^2(\Omega)} \leq e^{Ct}\norm{\gamma^{-1/2}(\cdot, 0)\wt v(\cdot,0)}^2_{L^2(\Omega)}=0, \text{ for }t\in (0,T),
		\]
		where we used the initial condition is $0$. This shows $\wt v=0$ in $\Omega_T$ as desired.
		
		When $\gamma\in C^1_tC_x(\R^n_T)$ is uniformly elliptic with $\partial_t\gamma\in L^{\infty}(\R^n_T)$, the proof of well-posedness of either \eqref{Liouv reduced Schrodinger equation} or \eqref{Liouv reduced Schrodinger equation adjoint} are similar. More precisely, one can use the relation
		\[
		\p_t (\gamma^{-1}v)=\gamma^{-1}\p_t v + \p_t(\gamma^{-1})v,
		\]
		then we are able to rewrite the equation \eqref{Liouv reduced Schrodinger equation} as 
		\begin{align}\label{Liouv reduced Schrodinger equation 1}
			\begin{cases}
				\gamma^{-1}\partial_t v +\LC (-\Delta)^s+\wt Q_\gamma\RC v= G  & \text{ in }\Omega_T,\\
				v= g  & \text{ in } (\Omega_e)_T,\\
				v(0) =0 & \text{ in } \Omega,
			\end{cases}
		\end{align}
		where $\wt Q_\gamma:=Q_\gamma+\p_t (\gamma^{-1})$ in $\Omega_T$. Now, it is not hard to see the well-posedness of \eqref{Liouv reduced Schrodinger equation 1} and \eqref{Liouv reduced Schrodinger equation adjoint} are the same by reversing the time variable $t\to T-t$ as in Definition \ref{def: time reversal}. 
		
		Now, by slight modification of the proof of Theorem~\ref{thm:fractionalLiouvilleReduction} one knows that $v$ is the unique solution to \eqref{Liouv reduced Schrodinger equation} if and only if $u$ is the solution to \eqref{eq: weak solutions of conductivity equation existence} with $G=\gamma^{-1/2}F\in L^2(0,T\,;H^{-s}(\Omega))$ and $g=\gamma^{1/2}f\in L^2(0,T\,;H^s(\R^n))$ with $\partial_tg\in L^2(\R^n_T)$. Hence, applying Theorem \ref{thm: well-posedness of forward problem} for the solution $u$ of \eqref{eq: weak solutions of conductivity equation existence}, one has $u\in L^2(0,T; H^s(\R^n))$ with $\p_t u \in L^2(0,T;H^{-s}(\Omega))$, so does $v$. This proves the assertion.
	\end{proof}

	\begin{remark}
		Note that combining similar arguments as in the proofs of Proposition \ref{prop: solution schroedinger eqs} and Theorem \ref{thm: well-posedness of forward problem}, one may derive the well-posedness of the initial-exterior value problem of 
		\begin{align}
			\begin{cases}
				\partial_t(\gamma^{-1}v) +\LC (-\Delta)^s+Q\RC v= G  & \text{ in }\Omega_T,\\
				v= g  & \text{ in } (\Omega_e)_T,\\
				v(0) =0 & \text{ in } \Omega,
			\end{cases}
		\end{align}
		under suitable regularity assumptions of $Q$, $g$ and $G$. Here the potential $Q$ may not be of the same form as the function $Q_\gamma$ given by the spacetime Liouville reduction \eqref{eq: space-time potential}.
	\end{remark}

	\section{Nonlocal Neumann derivative and new DN maps}
	\label{sec: new DN maps}

	Motivated by \cite[Lemma~3.3]{XavierNeumannBdry}, we define for a given function $u$ the analogous \emph{nonlocal normal derivative} in the exterior domain by 
	\begin{align}
		\mathcal{N}_{\gamma}u(x,t)=C_{n,s} \int_{\Omega} \gamma^{1/2}(x,t)\gamma^{1/2}(y,t) \frac{u(x,t)-u(y,t)}{|x-y|^{n+2s}} \, dy, \quad (x,t)\in (\Omega_e)_T,
	\end{align}
	where $C_{n,s}$ is the constant given by \eqref{C_ns}.
	%\footnote{P: see also \url{https://arxiv.org/pdf/1609.09248.pdf} Lemma 3.2 for a discussion in the elliptic case. Then the two formulations of DN map and normal derivative (in the case of additive potential $q$. Does this also hold for conductivity equation?).}

	\subsection{Alternative definition of the DN map}
	Let us make a new definition of the DN map:

	\begin{definition}[New DN map for nonlocal diffusion equation]\label{def: DN maps}
		Let $\Omega\subset \R^n$ be an open set bounded in one direction, $0<T<\infty$, $0<s<\min(1,n/2)$, $\gamma_0>0$ and $\gamma\in \Gamma_{s,\gamma_0}(\R^n_T)$. Then we define the exterior DN map $\mathcal{N}_{\gamma}$ by
		\begin{equation}\label{exterior DN}
			\begin{split}
				\left\langle\mathcal{N}_{\gamma}f, g \right\rangle=&\frac{C_{n,s}}{2}\int_0^T\int_{\R^{2n}\setminus(\Omega_e\times \Omega_e)}\gamma^{1/2}(x,t)\gamma^{1/2}(y,t)\\
				& \qquad \qquad  \cdot \frac{(u_f(x,t)-u_f(y,t))(g(x,t)-g(y,t))}{|x-y|^{n+2s}}\,dxdy\,dt,
			\end{split}
		\end{equation}
		for all $f,g\in C_c^{\infty}((\Omega_e)_T)$,
		where $u_f$ is the unique solution  (see Corollary~\ref{cor: well-posedness} for the well-posedness) of
		\begin{align}\label{eq:u_f}
			\begin{cases}
				\partial_tu+\Div_s(\Theta_{\gamma}\nabla^s u)= 0 &\text{ in }\Omega_T,\\
				u= f &  \text{ in } (\Omega_e)_T,\\
				u(0)=0 &\text{ in } \Omega.
			\end{cases}
		\end{align}
	\end{definition}
	
	\begin{proposition}
		\label{prop: alternative def of DN map cond eq}
		Let $\Omega\subset \R^n$ be an open set bounded in one direction, $0<T<\infty$, $0<s<\min(1,n/2)$, $\gamma_0>0$ and $\gamma\in \Gamma_{s,\gamma_0}(\R^n_T)$. Let $f,g\in C_c^{\infty}((\Omega_e)_T)$, denote by $u_f\in L^2(0,T\,;H^s(\R^n))$ the unique solutions to 
		\begin{align}\label{eq: prop DN map}
			\begin{cases}
				\partial_tu+\Div_s(\Theta_{\gamma}\nabla^s u)= 0 &\text{ in }\Omega_T,\\
				u= f &  \text{ in } (\Omega_e)_T,\\
				u(0)=0 &\text{ in } \Omega.
			\end{cases}
		\end{align}
		Let $v_g\in L^2(0,T\,;H^s(\R^n))$ be any function satisfying $\partial_t v_g \in L^2(0,T\, ; H^{-s}(\Omega))$, and $v_g-g\in L^2(0,T\,;\widetilde{H}^s(\Omega))$, then there holds
		\begin{equation}
			\label{eq: alternative def DN map}
			\begin{split}
				\langle\mathcal{N}_{\gamma}f,g\rangle & = \int_{\Omega_T}\partial_tu_fv_g\,dxdt+\int_{0}^TB_{\gamma}(u_f,v_g)\,dt\\
				&\quad -\frac{C_{n,s}}{2}\int_0^T\int_{\Omega_e\times \Omega_e}\gamma^{1/2}(x,t)\gamma^{1/2}(y,t)\\
				& \qquad \qquad \cdot\frac{(f(x,t)-f(y,t))(g(x,t)-g(y,t))}{|x-y|^{n+2s}}\,dxdy\,dt.
			\end{split}
		\end{equation}
	\end{proposition}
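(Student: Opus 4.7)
The plan is to combine two simple observations: splitting the integration region $\R^{2n}$ in the defining expression for $\langle \mathcal{N}_{\gamma}f,g\rangle$, and testing the weak formulation of the nonlocal diffusion equation \eqref{eq: prop DN map} against a suitable function built from $v_g$ and $g$.

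First, I would write $\R^{2n} = (\Omega_e \times \Omega_e) \sqcup (\R^{2n}\setminus (\Omega_e\times\Omega_e))$ and split the full bilinear form $\int_0^T B_{\gamma}(t\,;u_f,g)\,dt$ accordingly. Comparing with the definition \eqref{exterior DN}, this yields
\[
\int_0^T B_{\gamma}(t\,;u_f,g)\,dt = \langle \mathcal{N}_{\gamma}f, g\rangle + \frac{C_{n,s}}{2}\int_0^T \int_{\Omega_e\times \Omega_e}\gamma^{1/2}(x,t)\gamma^{1/2}(y,t)\frac{(u_f(x,t)-u_f(y,t))(g(x,t)-g(y,t))}{|x-y|^{n+2s}}\,dxdydt.
\]
Since $u_f - f \in L^2(0,T\,;\widetilde{H}^s(\Omega))$ by definition of a weak solution, we have $u_f = f$ a.e.\ on $(\Omega_e)_T$, so $u_f$ may be replaced by $f$ in the second term on the right.

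Next, I would exploit the weak formulation of \eqref{eq: prop DN map}. By Remark~\ref{remark: why initial condition in L Omega}, the solution $u_f$ satisfies $\langle \partial_t u_f, \varphi\rangle_{H^{-s}(\Omega)\times\widetilde{H}^s(\Omega)} + B_{\gamma}(t\,;u_f,\varphi) = 0$ for all $\varphi \in \widetilde{H}^s(\Omega)$ in the sense of distributions on $(0,T)$. Since $v_g - g \in L^2(0,T\,;\widetilde{H}^s(\Omega))$, a standard density argument (first choose $\varphi$ of separated-variables form, then approximate in $L^2(0,T\,;\widetilde{H}^s(\Omega))$) lets me test against $\varphi = v_g - g$, obtaining
\[
\int_0^T \langle \partial_t u_f(t), (v_g - g)(t)\rangle_{H^{-s}(\Omega)\times\widetilde{H}^s(\Omega)}\,dt + \int_0^T B_{\gamma}(t\,; u_f, v_g - g)\,dt = 0.
\]
Because $g \in C_c^{\infty}((\Omega_e)_T)$ vanishes identically on $\Omega$, we have $(v_g - g)|_{\Omega} = v_g|_{\Omega}$, so the duality pairing reduces to $\langle \partial_t u_f, v_g\rangle$, which is precisely what is meant by $\int_{\Omega_T}\partial_t u_f\, v_g\,dxdt$ under the natural duality between $L^2(0,T\,;H^{-s}(\Omega))$ and $L^2(0,T\,;\widetilde{H}^s(\Omega))$. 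Rearranging, using bilinearity of $B_{\gamma}$ in its second argument, I obtain
\[
\int_0^T B_{\gamma}(t\,; u_f, g)\,dt = \int_{\Omega_T}\partial_t u_f\, v_g\,dxdt + \int_0^T B_{\gamma}(t\,; u_f, v_g)\,dt,
\]
and inserting this into the splitting from the first step produces the claimed formula \eqref{eq: alternative def DN map}.

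The main subtlety I anticipate is the interpretation of the expression $\int_{\Omega_T}\partial_t u_f\, v_g\,dxdt$: strictly speaking $\partial_t u_f$ only belongs to $L^2(0,T\,;H^{-s}(\Omega))$, so this symbol must be read as a duality pairing. It is well-defined precisely because $v_g|_{\Omega} \in L^2(0,T\,;\widetilde{H}^s(\Omega))$, which follows from the hypothesis $v_g - g \in L^2(0,T\,;\widetilde{H}^s(\Omega))$ together with $g|_{\Omega_T} \equiv 0$. Once this interpretation is fixed, the remaining steps are purely algebraic manipulations inside the bilinear form, and no additional regularity beyond what is stated in the hypotheses is required.
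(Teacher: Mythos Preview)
Your proposal is correct and follows essentially the same approach as the paper: both split the domain $\R^{2n}$ into $(\Omega_e\times\Omega_e)$ and its complement, replace $u_f$ by $f$ on the exterior block, and then use the PDE tested against $v_g-g\in L^2(0,T;\widetilde{H}^s(\Omega))$ to convert $\int_0^TB_{\gamma}(u_f,g)\,dt$ into $\int_{\Omega_T}\partial_tu_f\,v_g+\int_0^TB_{\gamma}(u_f,v_g)\,dt$. The only cosmetic difference is that the paper carries out the last step via explicit time cutoffs $\eta_m\in C_c^{\infty}((0,T))$ and the distributional form \eqref{eq: distributional solutions}, whereas you invoke the a.e.-in-$t$ reformulation from Remark~\ref{remark: why initial condition in L Omega} directly; both routes are equivalent once $\partial_t u_f\in L^2(0,T;H^{-s}(\Omega))$ is known.
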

	
	\begin{proof}
		By the definition \eqref{eq: conductivity bilinear form} of the bilinear form $B_{\gamma}$, there holds
		\[\begin{split}
			\langle\mathcal{N}_{\gamma}u,g\rangle =&\frac{C_{n,s}}{2}\int_0^T\int_{\R^{2n}\setminus(\Omega_e\times \Omega_e)}\gamma^{1/2}(x,t)\gamma^{1/2}(y,t)\\
			&\qquad \qquad \cdot \frac{(u_f(x,t)-u_f(y,t))(g(x,t)-g(y,t))}{|x-y|^{n+2s}}\,dxdy\,dt\\
			=&\int_0^TB_{\gamma}(u_f,g)\,dt\\
			&-\frac{C_{n,s}}{2}\int_0^T\int_{\Omega_e\times \Omega_e}\gamma^{1/2}(x,t)\gamma^{1/2}(y,t)\\
			&\qquad \qquad \cdot \frac{(u_f(x,t)-u_f(y,t))(g(x,t)-g(y,t))}{|x-y|^{n+2s}}\,dxdy\,dt.
		\end{split} 
		\] 
		First note that by writting $u_f=(u_f-f)+f$ and using $\LC u_f-f\RC (\cdot,t) \in \widetilde{H}^s(\Omega)$ for a.e. $t\in (0,T)$, the last term is equal to
		\[
		-\frac{C_{n,s}}{2}\int_0^T\int_{\Omega_e\times \Omega_e}\gamma^{1/2}(x,t)\gamma^{1/2}(y,t)\frac{(f(x,t)-f(y,t))(g(x,t)-g(y,t))}{|x-y|^{n+2s}}\,dxdy\,dt.
		\]
		On the other hand by using cutoff functions $\eta_m$ in time vanishing near $t=0$ and $t=T$ but equal to one on the support of $g$, one obtains
		\[
		\begin{split}
			\int_0^TB_{\gamma}(u_f,g)\,dt
			=&\lim_{m\to\infty}\int_0^TB_{\gamma}(u_f,\eta_mg)\,dt\\
			=&\lim_{m\to\infty}\left(-\int_0^T B_{\gamma}(u_f,\eta_m(v_g-g))\,dt+\int_0^TB_{\gamma}(u_f,\eta_mv_g)\,dt\right)\\
			=&-\lim_{m\to\infty}\int_{\Omega_T}u_f\partial_t(\eta_m(v_g-g))\,dxdt+\int_{0}^TB_{\gamma}(u_f,v_g)\,dt\\
			=&\lim_{m\to\infty}\int_{\Omega_T}\partial_tu_f \eta_mv_g\,dxdt+\int_0^TB_{\gamma}(u_f,v_g)\,dt\\
			=&\int_{\Omega_T}\partial_tu_f v_g\,dxdt+\int_0^TB_{\gamma}(u_f,v_g)\,dt.
		\end{split}
		\]
		This concludes the proof.
	\end{proof}
	
%	\[
%	    \int_{\Omega_T}\partial_tu_f v_g\,dt=\lim_{m\to\infty} \int_{\Omega_T}\partial_tu_f \eta_m v_g\,dt
%	\]
%	$\eta_m=0$ for $0\leq t\leq 1/m$, $T-1/m\leq t\leq T$ and $\eta_m=1$ for $2/m\leq t\leq T-2/m$.

	We next define the DN map for the spacetime Liouville reduction equation by the corresponding nonlocal Neumann deriative.

	\begin{definition}
		\label{def: DN map schroedinger type equation}
		Let $\Omega\subset \R^n$ be an open set bounded in one direction, $0<T<\infty$, $0<s<\min(1,n/2)$, $\gamma_0>0$ and $\gamma\in \Gamma_{s,\gamma_0}(\R^n_T)$. Then we define the exterior DN map $\mathcal{N}_{Q_{\gamma}}$ by
		\begin{equation}\label{exterior DN schroedinger}
			\begin{split}
				\left\langle\mathcal{N}_{Q_{\gamma}}f, g \right\rangle=\frac{C_{n,s}}{2}\int_0^T\int_{\R^{2n}\setminus(\Omega_e\times \Omega_e)}\frac{(v_f(x,t)-v_f(y,t))(g(x,t)-g(y,t))}{|x-y|^{n+2s}}\,dxdy\,dt,
			\end{split}
		\end{equation}
		for all $f,g\in C_c^{\infty}((\Omega_e)_T)$,
		where $v_f$ is the unique solution (see Theorem~\ref{thm:fractionalLiouvilleReduction} and Proposition \ref{prop: solution schroedinger eqs}) of
		\begin{align}
			\label{eq: uf DN map}
			\begin{cases}
				\partial_t\LC \gamma^{-1}v\RC +\LC (-\Delta)^s+Q_{\gamma}\RC v= 0  & \text{ in }\Omega_T,\\
				v= g  & \text{ in } (\Omega_e)_T,\\
				v(0) =0 & \text{ in } \Omega,
			\end{cases}
		\end{align}
		and $C_{n,s}$ is the constant given by \eqref{C_ns}.
	\end{definition}

	To prove Theorem \ref{Theorem: General formulation}, let us derive a useful representation formula of \eqref{exterior DN schroedinger}. 
	
	\begin{proposition}
		\label{prop: alternative def of DN map schroedinger}
		Let $\Omega\subset \R^n$ be an open set bounded in one direction, $0<T<\infty$, $0<s<\min(1,n/2)$, $\gamma_0>0$ and $\gamma\in \Gamma_{s,\gamma_0}(\R^n_T)$. Let $f,g\in C_c^{\infty}((\Omega_e)_T)$, denote by $u_f$ the unique solutions to 
		\begin{align}
			\label{eq: uf prop DN map}
			\begin{cases}
				\partial_t\LC \gamma^{-1}u\RC +\LC (-\Delta)^s+Q_{\gamma}\RC u= 0  & \text{ in }\Omega_T,\\
				u= f  & \text{ in } (\Omega_e)_T,\\
				u(0) =0 & \text{ in } \Omega.
			\end{cases}
		\end{align}
		Let $v_g\in L^2(0,T\,;H^s(\R^n))$ be any function satisfying $\partial_t v_g\in L^2(0,T,\, H^{-s}(\Omega))$, and $v_g-g\in L^2(0,T\,;\widetilde{H}^s(\Omega))$, then there holds
		\begin{equation}
			\label{eq: alternative def DN map1}
			\begin{split}
				&\langle\mathcal{N}_{Q_{\gamma}}f,g\rangle \\
				=& \int_{\Omega_T}\partial_t(\gamma^{-1}u_f)v_g\,dxdt+\int_{\R^n_T}(-\Delta)^{s/2}u_f(-\Delta)^{s/2}v_g\,dxdt+\int_{\Omega_T}Q_{\gamma}u_fv_g\,dxdt\\
				& -\frac{C_{n,s}}{2}\int_0^T\int_{\Omega_e\times \Omega_e}\frac{(f(x,t)-f(y,t))(g(x,t)-g(y,t))}{|x-y|^{n+2s}}\,dxdy\,dt.
			\end{split}
		\end{equation}
	\end{proposition}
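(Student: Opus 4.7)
The plan is to follow the strategy used in Proposition \ref{prop: alternative def of DN map cond eq}, adapted to the Schr\"odinger-type equation \eqref{eq: uf prop DN map}. The key replacement of the bilinear form $B_{\gamma}$ is the Parseval-type identity
\[
\frac{C_{n,s}}{2}\int_{\R^{2n}}\frac{(w(x)-w(y))(h(x)-h(y))}{|x-y|^{n+2s}}\,dxdy=\int_{\R^n}(-\Delta)^{s/2}w\,(-\Delta)^{s/2}h\,dx,
\]
valid for $w,h\in H^s(\R^n)$, which converts the singular double integral over $\R^{2n}$ into a cleaner form to which the PDE can be applied.

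First, I would split $\R^{2n}=(\R^{2n}\setminus(\Omega_e\times\Omega_e))\cup(\Omega_e\times\Omega_e)$ in the definition \eqref{exterior DN schroedinger} of $\mathcal{N}_{Q_{\gamma}}$ and apply the Parseval identity on the $\R^{2n}$ piece to obtain
\begin{align*}
\langle\mathcal{N}_{Q_{\gamma}}f,g\rangle &= \int_{\R^n_T}(-\Delta)^{s/2}u_f\,(-\Delta)^{s/2}g\,dxdt \\
&\quad -\frac{C_{n,s}}{2}\int_0^T\int_{\Omega_e\times\Omega_e}\frac{(u_f(x,t)-u_f(y,t))(g(x,t)-g(y,t))}{|x-y|^{n+2s}}\,dxdydt.
\end{align*}
Since $u_f-f\in L^2(0,T;\widetilde{H}^s(\Omega))$ vanishes on $(\Omega_e)_T$, substituting $u_f=f+(u_f-f)$ in the second integral immediately reproduces the last term of \eqref{eq: alternative def DN map1}.

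To transform the remaining bulk integral, I would use time cutoffs $\eta_m\in C_c^{\infty}((0,T))$ with $\eta_m\equiv 1$ on the time support of $g$ and $\eta_m\to 1$ boundedly on $(0,T)$. Since $\eta_m g=g$, the decomposition $g=\eta_m v_g-\eta_m(v_g-g)$ gives
\begin{align*}
\int_{\R^n_T}(-\Delta)^{s/2}u_f(-\Delta)^{s/2}g\,dxdt &= \int_{\R^n_T}(-\Delta)^{s/2}u_f(-\Delta)^{s/2}(\eta_m v_g)\,dxdt \\
&\quad -\int_{\R^n_T}(-\Delta)^{s/2}u_f(-\Delta)^{s/2}[\eta_m(v_g-g)]\,dxdt.
\end{align*}
Because $\eta_m(v_g-g)\in L^2(0,T;\widetilde{H}^s(\Omega))$ has compact time support in $(0,T)$, it is admissible in the weak form of the Schr\"odinger-type equation from Proposition \ref{prop: solution schroedinger eqs}, so the second piece becomes
\[
\int_0^T\langle\partial_t(\gamma^{-1}u_f),\eta_m(v_g-g)\rangle\,dt+\int_{\Omega_T}Q_{\gamma}u_f\,\eta_m(v_g-g)\,dxdt.
\]
Passing to the limit $m\to\infty$ by dominated convergence (so that $\eta_m v_g\to v_g$ in $L^2(0,T;H^s(\R^n))$) and using $g\equiv 0$ on $\Omega_T$ to identify $v_g-g$ with $v_g$ on $\Omega_T$ produces exactly the three interior terms of \eqref{eq: alternative def DN map1}.

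The only subtlety I anticipate is the correct interpretation of $\int_{\Omega_T}\partial_t(\gamma^{-1}u_f)\,v_g\,dxdt$: since $\partial_t(\gamma^{-1}u_f)$ lies only in $L^2(0,T;H^{-s}(\Omega))$ while $v_g$ is not a priori an element of $L^2(0,T;\widetilde{H}^s(\Omega))$, this term must be read as the $H^{-s}(\Omega)$-$\widetilde{H}^s(\Omega)$ duality pairing against $v_g-g$, which agrees with $v_g$ on $\Omega_T$. This convention mirrors the one already implicit in Proposition \ref{prop: alternative def of DN map cond eq} and Remark \ref{remark: why initial condition in L Omega}, and poses no genuine obstacle; once the Parseval identity is invoked, the proof essentially reduces to bookkeeping that closely parallels the conductivity-equation case.
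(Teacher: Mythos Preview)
Your proposal is correct and follows essentially the same approach as the paper's proof: split off the $\Omega_e\times\Omega_e$ piece, replace $u_f$ by $f$ there, then use time cutoffs $\eta_m$ together with the decomposition $g=\eta_m v_g-\eta_m(v_g-g)$ and the weak form of the Schr\"odinger-type equation to convert the remaining bulk term. Your remark on interpreting $\int_{\Omega_T}\partial_t(\gamma^{-1}u_f)\,v_g\,dxdt$ as the $H^{-s}(\Omega)$--$\widetilde{H}^s(\Omega)$ pairing against $v_g-g$ is a helpful clarification that the paper leaves implicit.
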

	
	\begin{remark}
		Observe that the last term in \eqref{eq: alternative def DN map} is independent of $Q_{\gamma}$. Therefore, in this case the corresponding DN map $\Lambda_{Q_\gamma}$ can be defined by
		\[
		\begin{split}
			&\langle \Lambda_{Q_\gamma}f,g\rangle\\
			=&\int_{\Omega_T}\partial_t(\gamma^{-1}u_f)v_g\,dxdt+\int_{\R^n_T}(-\Delta)^{s/2}u_f(-\Delta)^{s/2}v_g\,dxdt+\int_{\Omega_T}Q_{\gamma}u_fv_g\, dxdt
		\end{split}
		\]
		for $f,g\in C_c^{\infty}((\Omega_e)_T)$ contains the same information as $\mathcal{N}_{Q_{\gamma}}$.
	\end{remark}
	
	\begin{proof}[Proof of Proposition \ref{prop: alternative def of DN map schroedinger}]
		As in the proof of Proposition~\ref{prop: alternative def of DN map cond eq} there holds
		\[
		\begin{split}
			\langle\mathcal{N}_{Q_\gamma}f,g\rangle & = \int_{\R^n_T}(-\Delta)^{s/2}u_f(-\Delta)^{s/2}g\,dxdt\\
			&\quad-\frac{C_{n,s}}{2}\int_0^T\int_{\Omega_e\times \Omega_e}\frac{(f(x,t)-f(y,t))(g(x,t)-g(y,t))}{|x-y|^{n+2s}}\,dxdy\,dt.
		\end{split}
		\]
		Next, as in  the proof of Proposition~\ref{prop: alternative def of DN map cond eq}, we use a sequence of cutoff functions $(\eta_m)_{m\in\N}\subset C_c^{\infty}((0,T))$ to deduce the identity
		\[
		\begin{split}
			&\int_{\R^n_T}(-\Delta)^{s/2}u_f(-\Delta)^{s/2}g\,dxdt\\
			=&\lim_{m\to\infty}-\int_{\R^n_T}(-\Delta)^{s/2}u_f(-\Delta)^{s/2}(\eta_m(v_g-g))\,dxdt\\
			&+\lim_{m\to\infty}\int_{\R^n_T}(-\Delta)^{s/2}u_f(-\Delta)^{s/2}(\eta_mv_g)\,dxdt\\
			=&\lim_{m\to\infty}\left(-\int_{\Omega_T}\gamma^{-1}u_f\partial_t(\eta_m(v_g-g))\,dxdt+\int_{\Omega_T}Q_{\gamma}u_f(\eta_m(v_g-g))\,dxdt\right)\\
			&+\int_{\R^n_T}(-\Delta)^{s/2}u_f(-\Delta)^{s/2}v_g\,dxdt\\
			=&\int_{\Omega_T}\partial_t(\gamma^{-1}u_f)v_g\,dxdt+\int_{\R^n_T}(-\Delta)^{s/2}u_f(-\Delta)^{s/2}v_g\,dxdt+\int_{\Omega_T}Q_{\gamma}u_fv_g\,dxdt.
		\end{split}
		\]
		This completes the proof.
	\end{proof}

	\subsection{Relation between DN map and nonlocal Neumann derivative}
	
	Let us consider two arbitrary nonempty open subsets $W_1 ,W_2 \subset W$, with $W_1\cap W_2 =\emptyset$, where $W\subset \Omega_e$ denotes the open set in the statements of either Theorem \ref{Theorem: General formulation} or Theorem \ref{Theorem: Exterior determination}. Meanwhile, with the exterior determination result (Theorem \ref{Theorem: Exterior determination}) at hand, one already knows that $\gamma_1=\gamma_2$ in $W_T$, provided that $\left. \Lambda_{\gamma_1}f \right|_{W_T}=\left. \Lambda_{\gamma_2}f \right|_{W_T}$ for any $f\in C^\infty_c(W_T)$. Adopting these notations, one immediately has $\gamma_1=\gamma_2$ in $(W_1 \cup W_2)_T$. Then we can derive the following relation.

	\begin{lemma}\label{Lemma: old DN implies new DN}
		Let $\Omega\subset \R^n$ be an open set bounded in one direction, $0<T<\infty$, $0<s<\min(1,n/2)$, $\gamma_0>0$ and $\gamma_j \in \Gamma_{s,\gamma_0}(\R^n_T)$ for $j=1,2$. Assume that $W_1,W_2\subset \Omega_e$ are two nonempty open disjoint sets and $\Gamma\in \Gamma_{s,\gamma_0}(\R^n_T)$ are such that $\gamma_1(x,t)=\gamma_2(x,t)=\Gamma(x,t)$ for all $(x,t)\in (W_1\cup W_2)_T$. Then we have 
		$$\left. \Lambda_{\gamma_1}f \right|_{(W_2)_T}=\left. \Lambda_{\gamma_2}f \right|_{(W_2)_T} \quad \text{for any } f \in C^\infty_c((W_1)_T)
		$$ 
		if and only if there holds
		\begin{align}
			\langle\mathcal{N}_{\gamma_1}f,g\rangle =\langle
			\mathcal{N}_{\gamma_2}f,g\rangle \quad\text{for any }f \in C^{\infty}_c((W_1)_T) \text{ and }g\in C^\infty_c((W_2)_T).
		\end{align}
	\end{lemma}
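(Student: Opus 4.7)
The plan is to exploit the fact that $\mathcal{N}_\gamma$ and $\Lambda_\gamma$ differ only by an integral over $\Omega_e\times\Omega_e$, and then show that this difference is independent of which of $\gamma_1,\gamma_2$ is used once the test functions $f,g$ are supported in $W_1$ and $W_2$ respectively.

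First, I would compare the two bilinear expressions directly. By the defining formulas \eqref{DN map in intro} and \eqref{exterior DN}, for $f\in C_c^\infty((W_1)_T)$ and $g\in C_c^\infty((W_2)_T)$ and $j=1,2$, splitting $\R^{2n}=(\R^{2n}\setminus(\Omega_e\times\Omega_e))\cup(\Omega_e\times\Omega_e)$ gives
\begin{equation*}
\langle\Lambda_{\gamma_j}f,g\rangle-\langle\mathcal{N}_{\gamma_j}f,g\rangle=\frac{C_{n,s}}{2}\int_0^T\!\!\int_{\Omega_e\times\Omega_e}\gamma_j^{1/2}(x,t)\gamma_j^{1/2}(y,t)\frac{(u_f^j(x,t)-u_f^j(y,t))(g(x,t)-g(y,t))}{|x-y|^{n+2s}}\,dxdydt,
\end{equation*}
where $u_f^j$ is the unique solution of \eqref{main eq nonlocal diff} with conductivity $\gamma_j$. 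Since $u_f^j=f$ in $(\Omega_e)_T$, on the integration domain $\Omega_e\times\Omega_e$ we may replace $u_f^j$ by $f$, which eliminates the $j$-dependence of the numerator.

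Next, I would analyse the support of the remaining integrand. Because $W_1\cap W_2=\emptyset$, the product $f\cdot g$ vanishes identically on $\R^n_T$, hence
\[
(f(x,t)-f(y,t))(g(x,t)-g(y,t))=-f(x,t)g(y,t)-f(y,t)g(x,t)
\]
is supported in $(W_1\times W_2)\cup(W_2\times W_1)\subset(W_1\cup W_2)\times(W_1\cup W_2)$ (in the space variables). On this set, the hypothesis $\gamma_1=\gamma_2=\Gamma$ on $(W_1\cup W_2)_T$ yields $\gamma_1^{1/2}(x,t)\gamma_1^{1/2}(y,t)=\gamma_2^{1/2}(x,t)\gamma_2^{1/2}(y,t)=\Gamma^{1/2}(x,t)\Gamma^{1/2}(y,t)$. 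Consequently the integral over $\Omega_e\times\Omega_e$ above is the \emph{same} number for $j=1$ and $j=2$.

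Combining the two observations, I get
\[
\langle\Lambda_{\gamma_1}f,g\rangle-\langle\mathcal{N}_{\gamma_1}f,g\rangle=\langle\Lambda_{\gamma_2}f,g\rangle-\langle\mathcal{N}_{\gamma_2}f,g\rangle
\]
for every $f\in C_c^\infty((W_1)_T)$ and $g\in C_c^\infty((W_2)_T)$, which rearranges into
\[
\langle\Lambda_{\gamma_1}f-\Lambda_{\gamma_2}f,g\rangle=\langle\mathcal{N}_{\gamma_1}f-\mathcal{N}_{\gamma_2}f,g\rangle.
\]
The equivalence stated in the lemma is then immediate by testing against arbitrary $g\in C_c^\infty((W_2)_T)$. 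I do not anticipate any real obstacle here; the only small point requiring care is the integrability of the individual pieces, which is guaranteed by $\gamma_j\in\Gamma_{s,\gamma_0}(\R^n_T)$ and the smoothness and compact support of $f,g$, so that the splitting of the $\R^{2n}$ integral into two absolutely convergent parts is justified.
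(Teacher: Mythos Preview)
Your proposal is correct and follows essentially the same approach as the paper: split the $\R^{2n}$ integral defining $\Lambda_{\gamma_j}$ into the $\mathcal{N}_{\gamma_j}$ part and the $\Omega_e\times\Omega_e$ part, replace $u_f^j$ by $f$ in the latter using the exterior condition, and then use the disjoint supports of $f,g$ together with $\gamma_1=\gamma_2$ on $(W_1\cup W_2)_T$ to see that this exterior piece is independent of $j$. Your support analysis for the integrand is in fact slightly more explicit than the paper's, and your remark on integrability correctly handles the only subtle point.
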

	
	\begin{proof}
		We have for any $f\in C^\infty_c((W_1)_T)$ and $g\in C_c^{\infty}((W_2)_T)$
		\begin{equation}\label{eq: DN map diffusion}
			\langle \Lambda_{\gamma}f,g\rangle=\frac{C_{n,s}}{2}\int_0^T\int_{\R^{2n}}\gamma^{1/2}(x,t)\gamma^{1/2}(y,t)\frac{(u_f(x,t)-u_f(y,t))(g(x,t)-g(y,t))}{|x-y|^{n+2s}}\,dxdydt
		\end{equation}
		by using Definition \ref{def: the DN map}.
		Thus, combining with Definition \ref{def: DN maps}, one has that 
		\begin{equation}\label{eq: relation different DN maps diffusion}
			\begin{split}
				\langle \Lambda_{\gamma_1}f,g\rangle =&\langle\mathcal{N}_{\gamma_1}f,g\rangle+\frac{C_{n,s}}{2}\int_0^T\int_{\Omega_e\times\Omega_e}\gamma_1^{1/2}(x,t)\gamma_1^{1/2}(y,t)\\
				& \qquad \qquad \qquad  \cdot \frac{(f(x,t)-f(y,t))(g(x,t)-g(y,t))}{|x-y|^{n+2s}}\,dxdy\,dt\\
				=&\langle\mathcal{N}_{\gamma_2}f,g\rangle+\frac{C_{n,s}}{2}\int_0^T\int_{\Omega_e\times\Omega_e}\gamma_2^{1/2}(x,t)\gamma_2^{1/2}(y,t)\\
				& \qquad \qquad \qquad  \cdot \frac{(f(x,t)-f(y,t))(g(x,t)-g(y,t))}{|x-y|^{n+2s}}\,dxdy\,dt\\
				=&\langle \Lambda_{\gamma_2}f,g\rangle,
			\end{split}
		\end{equation}
		where we used that $u_f=f$ in $(\Omega_e)_T$.
		
		On the other hand, one can see that
		\begin{align}\label{eq: relation different DN maps diffusion 1}
			\begin{split}
				&\int_0^T\int_{\Omega_e\times\Omega_e}\gamma_1^{1/2}(x,t)\gamma_1^{1/2}(y,t) \frac{(f(x,t)-f(y,t))(g(x,t)-g(y,t))}{|x-y|^{n+2s}}\,dxdy\,dt \\
				=&\int_0^T\int_{\Omega_e\times\Omega_e}\gamma_2^{1/2}(x,t)\gamma_2^{1/2}(y,t) \frac{(f(x,t)-f(y,t))(g(x,t)-g(y,t))}{|x-y|^{n+2s}}\,dxdy\,dt ,
			\end{split}
		\end{align}
		where we used that $f\in C^\infty_c((W_1)_T)$, $g\in C_c^{\infty}((W_2)_T)$ with $\gamma_1=\gamma_2$ in $(W_1\cup W_2)_T$ and $W_1\cap W_2=\emptyset$. Finally, insert \eqref{eq: relation different DN maps diffusion 1} into \eqref{eq: relation different DN maps diffusion}, we can see the assertion is true. This completes the proof.
	\end{proof}

	\begin{theorem}[Relation of DN maps]\label{Thm: relation of DNs}
		\label{eq: relation DN maps}
		Let $\Omega\subset \R^n$ be an open set bounded in one direction, $0<T<\infty$, $0<s<\min(1,n/2)$, $\gamma_0>0$ and $\gamma_j \in \Gamma_{s,\gamma_0}(\R^n_T)$ for $j=1,2$. Assume that $W_1,W_2\subset \Omega_e$ are two nonempty open disjoint sets and $\Gamma\in \Gamma_{s,\gamma_0}(\R^n_T)$ are such that $\gamma_1(x,t)=\gamma_2(x,t)=\Gamma(x,t)$ for all $(x,t)\in (W_1\cup W_2)_T$ and $\Gamma\in C^{\infty}((W_1\cup W_2)_T)$. Then for $f\in C_c^{\infty}((W_1)_T),g\in C_c^{\infty}((W_2)_T)$, we have
		\[
		\langle\mathcal{N}_{\gamma_1}f,g\rangle =\langle
		\mathcal{N}_{\gamma_2}f,g\rangle
		\]
		if and only if
		\[
		\langle\mathcal{N}_{Q_{\gamma_1}}(\Gamma^{1/2}f),(\Gamma^{1/2}g)\rangle = \langle\mathcal{N}_{Q_{\gamma_2}}(\Gamma^{1/2}f),(\Gamma^{1/2}g)\rangle.
		\]
	\end{theorem}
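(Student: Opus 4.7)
The plan is to prove the stronger per-index identity
\[
\langle \mathcal{N}_{\gamma_j}f,g\rangle \;=\; \langle \mathcal{N}_{Q_{\gamma_j}}(\Gamma^{1/2}f),(\Gamma^{1/2}g)\rangle
\]
for each $j\in\{1,2\}$ separately and all admissible $f,g$, from which the claimed biconditional follows by subtraction. Because $\Gamma\in C^\infty((W_1\cup W_2)_T)$ and $f,g$ are compactly supported in the disjoint sets $(W_1)_T,(W_2)_T$, the functions $\wt f\vcentcolon=\Gamma^{1/2}f$ and $\wt g\vcentcolon=\Gamma^{1/2}g$ are again in $C_c^\infty$ with unchanged supports, so they are legitimate exterior data on the Schr\"odinger side.

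\textbf{Choice of test function via Liouville.} Both pairings are evaluated through the alternative representations in Proposition~\ref{prop: alternative def of DN map cond eq} and Proposition~\ref{prop: alternative def of DN map schroedinger}. By Theorem~\ref{thm:fractionalLiouvilleReduction}, the unique solution of the Schr\"odinger-type problem with exterior data $\wt f$ is exactly $\wt u_{\wt f}=\gamma_j^{1/2}u_f$, where we use $\gamma_j=\Gamma$ on $\supp f$ to match the exterior traces. The key trick is to pick the auxiliary test function on the Schr\"odinger side as $\wt v_{\wt g}\vcentcolon=\gamma_j^{1/2}v_g$, where $v_g$ is any admissible test function for the diffusion side. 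Admissibility of $\wt v_{\wt g}$ follows from Lemma~\ref{useful lemma}\eqref{assertion 1}, from $\gamma_j\in C^1_tC_x(\R^n_T)$ with $\partial_t\gamma_j\in L^\infty(\R^n_T)$ (which transfers the $H^{-s}(\Omega)$-regularity of $\partial_t v_g$ to $\partial_t\wt v_{\wt g}$ via the product rule), and from the identity $\wt v_{\wt g}-\wt g=\gamma_j^{1/2}(v_g-g)+(\gamma_j^{1/2}-\Gamma^{1/2})g$, whose second summand vanishes because $\gamma_j=\Gamma$ on $\supp g$.

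\textbf{Matching the bulk terms.} With this substitution, Lemma~\ref{useful lemma}\eqref{assertion 2} gives
\[
\int_0^T B_{\gamma_j}(u_f,v_g)\,dt = \int_{\R^n_T}(-\Delta)^{s/2}\wt u_{\wt f}\,(-\Delta)^{s/2}\wt v_{\wt g}\,dxdt + \int_{\R^n_T}q_{\gamma_j}\wt u_{\wt f}\wt v_{\wt g}\,dxdt,
\]
and the $q_{\gamma_j}$-integral localizes to $\Omega_T$ since $\wt u_{\wt f}\wt v_{\wt g}=\wt f\wt g\equiv 0$ in the exterior by the disjoint-supports assumption. The time-derivative identity \eqref{eq: time derivative}, when expanded under the Liouville substitution, produces $\partial_tu_f\,v_g$ as $\partial_t(\gamma_j^{-1}\wt u_{\wt f})\wt v_{\wt g}$ plus a zeroth-order-in-time residual proportional to $\partial_t\gamma_j$; this residual is precisely what the $\partial_t\gamma_j/(2\gamma_j^2)$-summand inside $Q_{\gamma_j}$ is designed to absorb, so that the two bulk sums in the alternative DN representations coincide.

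\textbf{Matching the boundary terms and conclusion.} For the remaining double integrals over $\Omega_e\times\Omega_e$, only the cross-contributions $(x,y)\in W_1\times W_2$ and $(y,x)\in W_1\times W_2$ survive the disjoint-support analysis, and on those $\gamma_j^{1/2}(x,t)\gamma_j^{1/2}(y,t)=\Gamma^{1/2}(x,t)\Gamma^{1/2}(y,t)$, so the weights absorb cleanly into $\wt f,\wt g$ and reproduce exactly the boundary term from Proposition~\ref{prop: alternative def of DN map schroedinger}. Combining the bulk and boundary identifications yields the per-index equality, and hence the theorem. The main obstacle is the careful sign and commutator bookkeeping for the first-order-in-time corrections under the Liouville substitution, together with verifying that the test-function regularity classes are preserved by multiplication with $\gamma_j^{\pm 1/2}$; everything else reduces to formal manipulations of identities already recorded in the paper.
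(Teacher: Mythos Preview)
Your proposal is correct and follows essentially the same route as the paper: both invoke the alternative representations of Propositions~\ref{prop: alternative def of DN map cond eq} and~\ref{prop: alternative def of DN map schroedinger}, identify the Schr\"odinger-side solution and test function with $\gamma_j^{1/2}u_f$ and $\gamma_j^{1/2}v_g$ via the Liouville reduction, use Lemma~\ref{useful lemma}\eqref{assertion 2} and the identity \eqref{eq: time derivative} to match the bulk terms, and finally use $W_1\cap W_2=\emptyset$ together with $\gamma_j=\Gamma$ on $(W_1\cup W_2)_T$ to handle the exterior $q_{\gamma_j}$-term and the $\Omega_e\times\Omega_e$ boundary integrals.

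The only noteworthy difference is presentational: the paper computes the difference $\langle\mathcal{N}_{\gamma_j}f,g\rangle-\langle\mathcal{N}_{Q_{\gamma_j}}(\Gamma^{1/2}f),\Gamma^{1/2}g\rangle$ and shows the resulting expression depends only on $\Gamma,f,g$ (hence is $j$-independent), whereas you assert and prove the sharper per-index equality $\langle\mathcal{N}_{\gamma_j}f,g\rangle=\langle\mathcal{N}_{Q_{\gamma_j}}(\Gamma^{1/2}f),\Gamma^{1/2}g\rangle$. Your version is indeed valid: under the disjoint-support hypothesis the exterior $q_{\gamma_j}$-integral vanishes (since $fg\equiv 0$ in $(\Omega_e)_T$), and the two $\Omega_e\times\Omega_e$ boundary terms coincide exactly (both reduce to the same $\Gamma^{1/2}(x)\Gamma^{1/2}(y)$-weighted cross terms), so the residual is zero rather than merely $j$-independent. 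This makes your argument slightly cleaner, but the underlying mechanism is identical.
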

	\begin{proof}
		By the Liouville reduction (cf.~Theorem~\ref{thm:fractionalLiouvilleReduction}) there holds
		\[
		\begin{split}
			&\int_{\Omega_T}\partial_tu_fv_g\,dxdt+\int_{0}^TB_{\gamma}(u_f,v_g)\,dt\\
			=&\int_{\Omega_T}\partial_t(\gamma^{-1}(\gamma^{1/2}u_f))(\gamma^{1/2}v_g)\,dxdt+\int_{\R^n_T}(-\Delta)^{s/2}(\gamma^{1/2}u_f)(-\Delta)^{s/2}(\gamma^{1/2}v_g)\,dxdt\\
			& +\int_{\R^n_T}q_{\gamma}(\gamma^{1/2}u_f)(\gamma^{1/2}v_g)\,dxdt-\int_{\Omega_T}\frac{1}{2\gamma^2}(\gamma^{1/2}u_f)(\gamma^{1/2}v_g)\,dxdt\\
			=&\int_{\Omega_T}\partial_t(\gamma^{-1}(\gamma^{1/2}u_f))(\gamma^{1/2}v_g)\,dxdt+\int_{\R^n_T}(-\Delta)^{s/2}(\gamma^{1/2}u_f)(-\Delta)^{s/2}(\gamma^{1/2}v_g)\,dxdt\\
			& +\int_{\Omega_T}Q_{\gamma}(\gamma^{1/2}u_f)(\gamma^{1/2}v_g)\,dxdt +\int_{(\Omega_e)_T}q_{\gamma}(\gamma^{1/2}u_f)(\gamma^{1/2}v_g)\,dxdt\\
			=&\langle \mathcal{N}_{Q_{\gamma}}(\Gamma^{1/2}f),(\Gamma^{1/2}g)\rangle +\int_{(\Omega_e)_T}q_{\gamma}(\gamma^{1/2}f)(\gamma^{1/2}g)\,dxdt\\
			& +\frac{C_{n,s}}{2}\int_0^T\int_{\Omega_e\times \Omega_e}\frac{(f(x,t)-f(y,t))(g(x,t)-g(y,t))}{|x-y|^{n+2s}}\,dxdy\,dt,
		\end{split}
		\]
		since $w_f\vcentcolon = \gamma^{1/2}u_f$ solves \eqref{eq: uf prop DN map} with exterior condition $\Gamma^{1/2}f$ and $v_{\Gamma^{1/2}g}\vcentcolon = \gamma^{1/2}v_g$ is an extension of $\gamma^{1/2}g$ with the same regularity properties as $v_g$. Therefore, there holds
		\[
		\begin{split}
			&\langle \mathcal{N}_{\gamma}f,g\rangle-\langle\mathcal{N}_{Q_{\gamma}}(\Gamma^{1/2}f),(\Gamma^{1/2}g)\rangle \\
			=&\int_{(\Omega_e)_T}q_{\gamma}(\gamma^{1/2}f)(\gamma^{1/2}g)\,dxdt\\
			&+ \frac{C_{n,s}}{2}\int_0^T\int_{\Omega_e\times \Omega_e}\frac{(f(x,t)-f(y,t))(g(x,t)-g(y,t))}{|x-y|^{n+2s}}\,dxdy\,dt\\
			&+\frac{C_{n,s}}{2}\int_0^T\int_{\Omega_e\times \Omega_e}\gamma^{1/2}(x,t)\gamma^{1/2}(y,t)\\ &\qquad \qquad \qquad \cdot \frac{(f(x,t)-f(y,t))(g(x,t)-g(y,t))}{|x-y|^{n+2s}}\,dxdy\,dt.
		\end{split}
		\]
		Since $W_1\cap W_2=\emptyset$, it follows that
		\[
		\begin{split}
			&\langle \mathcal{N}_{\gamma}f,g\rangle-\langle\mathcal{N}_{Q_{\gamma}}(\Gamma^{1/2}f),(\Gamma^{1/2}g)\rangle\\
			=&-\frac{C_{n,s}}{2}\int_0^T\int_{\Omega_e\times \Omega_e}\LC 1+\Gamma^{1/2}(x,t)\Gamma^{1/2}(y,t)\RC \\
			&\qquad \qquad \cdot \frac{(f(x,t)g(y,t)-f(y,t)g(x,t))}{|x-y|^{n+2s}}\,dxdy\,dt.
		\end{split}
		\]
		Now this expression on the right hand side does not depend on the conductivities and so we see that there holds 
		$$
		\langle\mathcal{N}_{\gamma_1}f,g\rangle =\langle \mathcal{N}_{\gamma_2}f,g\rangle \text{ if and only if } \langle\mathcal{N}_{Q_{\gamma_1}}f,g\rangle=\langle\mathcal{N}_{Q_{\gamma_2}}f,g\rangle,
		$$ 
		for any $f\in C^\infty_c((W_1)_T)$ and $g\in C^\infty_c((W_2)_T)$.
		This proves the assertion.
	\end{proof}

	\subsection{Adjoint DN map}
	
	Let us introduce the adjoint DN map which then will be used to prove a suitable integral identity.
	
	\begin{definition}[Adjoint DN map]
		\label{def: adjoint DN map}
		Let $\Omega\subset \R^n$ be an open set bounded in one direction, $0<T<\infty$, $0<s<\min(1,n/2)$, $\gamma_0>0$ and $\gamma\in \Gamma_{s,\gamma_0}(\R^n_T)$. Then we define the adjoint exterior DN map $\mathcal{N}^{\ast}_{Q_{\gamma}}$ by
		\begin{equation}
			\label{eq: adjoint DN map}
			\begin{split}
				\langle \mathcal{N}^{\ast}_{Q_{\gamma}}f,g\rangle
				=\frac{C_{n,s}}{2}\int_0^T\int_{\R^{2n}\setminus(\Omega_e\times \Omega_e)}\frac{(u_f(x,t)-u_f(y,t))(g(x,t)-g(y,t))}{|x-y|^{n+2s}}\,dxdy\,dt
			\end{split}
		\end{equation}
		for all $f,g\in C_c^{\infty}((\Omega_e)_T)$, where $u_f$ is the unique solution to 
		\begin{align}
			\label{eq: adjoint DN map Schroedinger}
			\begin{cases}
				-\gamma^{-1}\partial_tv +\LC (-\Delta)^s+Q_{\gamma}\RC v= 0  & \text{ in }\Omega_T,\\
				v= f  & \text{ in } (\Omega_e)_T,\\
				v(T) =0 & \text{ in } \Omega,
			\end{cases}
		\end{align}
	and $C_{n,s}$ is the constant given by \eqref{C_ns}.
	\end{definition}

	We make the following simple observations:
	
	\begin{lemma}[Properties adjoint DN map]
		\label{lemma: properties adjoint DN map}
		Let $\Omega\subset \R^n$ be an open set bounded in one direction, $0<T<\infty$, $0<s<\min(1,n/2)$, $\gamma_0>0$ and $\gamma\in \Gamma_{s,\gamma_0}(\R^n_T)$. If $f,g \in C_c^{\infty}((\Omega_e)_T)$ and $u_f$ is the unique solution to 
		\begin{align}
			\label{eq: sol prop adjoint DN map}
			\begin{cases}
				-\gamma^{-1}\partial_tu +\LC (-\Delta)^s+Q_{\gamma}\RC u= 0  & \text{ in }\Omega_T,\\
				u= f  & \text{ in } (\Omega_e)_T,\\
				u(T) =0 & \text{ in } \Omega
			\end{cases}
		\end{align}
		then
		\begin{enumerate}[(i)]
			\item\label{item alternative representation of adjoint DN map}
			for any any extension $v_g$ of $g$ with $v_g\in L^2(0,T\,;H^s(\R^n))$ and $\partial_tv_g\in L^2(0,T,\, H^{-s}(\Omega))$ there holds
			\begin{equation}
				\label{eq: alternative rep adj DN map}
				\begin{split}
					\langle &\mathcal{N}^{\ast}_{Q_{\gamma}}f,g\rangle\\
					=&-\int_{\Omega_T}\gamma^{-1}\partial_t u_fv_g\,dxdt+\int_{\R^n_T}(-\Delta)^{s/2}u_f(-\Delta)^{s/2}v_g\,dxdt+\int_{\Omega_T}Q_{\gamma}u_fv_g\,dxdt\\
					& -\frac{C_{n,s}}{2}\int_0^T\int_{\Omega_e\times \Omega_e}\frac{(f(x,t)-f(y,t))(g(x,t)-g(y,t))}{|x-y|^{n+2s}}\,dxdy\,dt,
				\end{split}
			\end{equation}
			\item\label{item selfadjointness of DN map} 
			\begin{equation}
				\label{eq: selfadjointness DN map}
				\langle \mathcal{N}^{\ast}_{Q_{\gamma}}f,g\rangle = \langle\mathcal{N}_{Q_{\gamma}}g,f \rangle.
			\end{equation}
		\end{enumerate}
	\end{lemma}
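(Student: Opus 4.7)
For part~(i), I would mirror the argument used in the proof of Proposition~\ref{prop: alternative def of DN map schroedinger}. Splitting the domain of integration by adding and subtracting the integration over $\Omega_e\times\Omega_e$ in the definition \eqref{eq: adjoint DN map} rewrites
\begin{equation*}
\langle \mathcal{N}^{\ast}_{Q_{\gamma}}f,g\rangle = \int_{\R^n_T}(-\Delta)^{s/2}u_f(-\Delta)^{s/2}g\,dxdt - \frac{C_{n,s}}{2}\int_0^T\int_{\Omega_e\times \Omega_e}\frac{(f(x,t)-f(y,t))(g(x,t)-g(y,t))}{|x-y|^{n+2s}}\,dxdydt.
\end{equation*}
The exterior term already matches the final term in \eqref{eq: alternative rep adj DN map}, so it remains to rewrite the first integral in terms of $v_g$. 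Since $v_g - g$ belongs to $L^2(0,T\,;\widetilde{H}^s(\Omega))$, it is an admissible test function in the weak formulation of the adjoint equation \eqref{eq: sol prop adjoint DN map}, and since $g$ vanishes on $\Omega_T$, substituting $v_g = (v_g - g) + g$ and rearranging yields the stated identity \eqref{eq: alternative rep adj DN map}. If the weak formulation is first established only with cutoffs $\eta_m \in C_c^{\infty}((0,T))$ equal to one on $\supp(g)$, one passes to the limit exactly as in the proof of Proposition~\ref{prop: alternative def of DN map schroedinger}.

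For part~(ii), I denote by $v_g$ the unique solution to the forward equation \eqref{eq: uf DN map} with exterior data $g$ and by $u_f$ the unique solution to the adjoint equation \eqref{eq: adjoint DN map Schroedinger} with exterior data $f$. Applying \eqref{eq: alternative rep adj DN map} with $v_g$ as extension for $\langle \mathcal{N}^{\ast}_{Q_\gamma}f,g\rangle$ and \eqref{eq: alternative def DN map1} with $u_f$ as extension for $\langle \mathcal{N}_{Q_\gamma}g,f\rangle$, I observe that the fractional Laplacian term, the potential term and the symmetric exterior bilinear form appearing in both formulas coincide. Consequently
\begin{equation*}
\langle \mathcal{N}^{\ast}_{Q_\gamma}f,g\rangle - \langle \mathcal{N}_{Q_\gamma}g,f\rangle = -\int_{\Omega_T}\gamma^{-1}\partial_tu_f\,v_g\,dxdt - \int_{\Omega_T}\partial_t(\gamma^{-1}v_g)u_f\,dxdt,
\end{equation*}
and this right-hand side reduces, via the integration-by-parts formula in Banach spaces applied to the pair $(u_f, \gamma^{-1}v_g)$, to $-\int_{\Omega} [\gamma^{-1}u_fv_g]_0^T\,dx$, which vanishes by the terminal condition $u_f(T)=0$ and the initial condition $v_g(0)=0$ in $\Omega$.

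The most delicate step is justifying the integration by parts in time. The hypothesis $\gamma \in \Gamma_{s,\gamma_0}(\R^n_T)$ guarantees $\partial_t\gamma \in L^{\infty}(\R^n_T)$, so multiplication by $\gamma^{-1}$ preserves the regularity class $L^2(0,T\,;H^s(\R^n))$ with time derivative in $L^2(0,T\,;H^{-s}(\Omega))$ (using Lemma~\ref{useful lemma} in every slice together with a standard product-rule computation). The integration-by-parts formula from \cite[Chapter~XVIII]{DautrayLionsVol5} and the associated trace theorem then identify the boundary contribution at $t=0,T$, closing the argument.
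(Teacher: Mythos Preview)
Your proposal is correct and follows essentially the same route as the paper. For (i) the paper simply refers back to the calculation in Proposition~\ref{prop: alternative def of DN map schroedinger}, exactly as you outline; for (ii) the paper records the single identity $\int_{\Omega_T}\bigl(\partial_t(\gamma^{-1}u_g)u_f+u_g\gamma^{-1}\partial_tu_f\bigr)\,dxdt=0$ coming from $u_f(T)=0$, $u_g(0)=0$, which is precisely the integration-by-parts step you isolate after cancelling the common fractional, potential, and exterior terms.
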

	
	\begin{proof}
		(i): This follows from a similar calculation as in Proposition~\ref{prop: alternative def of DN map schroedinger}.\\
		
		\noindent (ii): %Observe that if $u_f$ solves then by the proof of Lemma~\ref{lemma: solution adjoint schroedinger eq} we know that $u_f^{\ast}$ solves 
		%\begin{align}
		%		\label{eq: equation for selfadjointness proof}
		%		\begin{cases}
			%			\partial_t((\gamma^{\ast})^{-1}w) +\LC (-\Delta)^s+Q_{\gamma^{\ast}}\RC w= 0  & \text{ in }\Omega_T,\\
			%			w= f^{\ast}  & \text{ in } (\Omega_e)_T,\\
			%			w(0) =0 & \text{ in } \Omega.
			%		\end{cases}
		%	\end{align}
	Let $u_g$ be the solution to \eqref{eq: uf prop DN map} as the exterior data $f$ is replaced by $g$.
	Since $u_f(T)=0$, $u_g(0)=0$ there holds
	\[
	\int_{\Omega_T}\LC \partial_t(\gamma^{-1}u_g)u_f+u_g\gamma^{-1}\partial_tu_f) \RC dxdt=0
	\]
	This immediately shows the claim.
\end{proof}

\section{The global uniqueness}

We split this final section into several parts. We first establish the integral identity and the Runge approximations in Sections \ref{sec: integral id} and \ref{sec: runge approx.}, respectively. Combined with these two statements, one can prove the interior uniqueness in Section \ref{sec: interior det}. Finally, we show the UCP of exterior DN maps, which together with the work of Section \ref{sec: exterior det} imply the global uniqueness result of Theorem \ref{Theorem: General formulation}. 
%We finish in Section \ref{sec: counterexamples} with a construction of counterexamples for the partial data problem with general disjoint sets of measurements.

\subsection{Integral identity}\label{sec: integral id}

One of the key material to prove the interior uniqueness is to derive a suitable integral identity.

\begin{proposition}[Integral identity]
	\label{prop: integral identity}
	Let $\Omega\subset \R^n$ be an open set bounded in one direction, $0<T<\infty$, $0<s<\min(1,n/2)$, $\gamma_0>0$ and $\gamma_j\in \Gamma_{s,\gamma_0}(\R^n_T)$ for $j=1,2$. Assume that $W_1,W_2\subset \Omega_e$ are two nonempty open sets and $\Gamma\in \Gamma_{s,\gamma_0}(\R^n_T)$ are such that $\gamma_1(x,t)=\gamma_2(x,t)=\Gamma(x,t)$ for all $(x,t)\in (W_1\cup W_2)_T$ and $\Gamma\in C^{\infty}((W_1\cap W_2)_T)$. Then for $f\in C_c^{\infty}((W_1)_T),g\in C_c^{\infty}((W_2)_T)$ we have
	\begin{equation}
		\label{eq: difference schroedinger DN maps}
		\langle(\mathcal{N}_{Q_{\gamma_1}}-\mathcal{N}_{Q_{\gamma_2}})f,g\rangle = \int_{\Omega_T}(\gamma_2^{-1}-\gamma_1^{-1})v_f\partial_tv_g\,dxdt+\int_{\Omega_T}(Q_{\gamma_1}-Q_{\gamma_2})v_gv_f\,dxdt,
	\end{equation}
	where $v_f$ is the unique solutions to \eqref{eq: uf prop DN map} with $\gamma=\gamma_1$ and $v_g$ is the unique solution to the adjoint equation
	\begin{align}
		\label{eq: adjoint sol}
		\begin{cases}
			-\gamma^{-1}_2\partial_t w +\LC (-\Delta)^s+Q_{\gamma_2}\RC w= 0  & \text{ in }\Omega_T,\\
			w= g  & \text{ in } (\Omega_e)_T,\\
			w(T) =0 & \text{ in } \Omega.
		\end{cases}
	\end{align}
\end{proposition}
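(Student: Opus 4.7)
The plan is to combine the alternative representations of the DN maps from Proposition~\ref{prop: alternative def of DN map schroedinger} and Lemma~\ref{lemma: properties adjoint DN map}\,\eqref{item alternative representation of adjoint DN map}, together with the selfadjointness identity $\langle \mathcal{N}_{Q_{\gamma_2}}f,g\rangle=\langle \mathcal{N}^{\ast}_{Q_{\gamma_2}}g,f\rangle$ from Lemma~\ref{lemma: properties adjoint DN map}\,\eqref{item selfadjointness of DN map}. This reformulation is precisely what allows the forward solution $v_f$ associated to $\gamma_1$ to be paired with the backward adjoint solution $v_g$ associated to $\gamma_2$, as demanded by the statement.

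First I would extend $g$ by the backward solution $v_g$ when writing out $\langle \mathcal{N}_{Q_{\gamma_1}}f,g\rangle$, noting that $v_g$ has the regularity required by Proposition~\ref{prop: alternative def of DN map schroedinger} thanks to Proposition~\ref{prop: solution schroedinger eqs}. Symmetrically I would extend $f$ by the forward solution $v_f$ when writing out $\langle \mathcal{N}^{\ast}_{Q_{\gamma_2}}g,f\rangle$ via Lemma~\ref{lemma: properties adjoint DN map}\,\eqref{item alternative representation of adjoint DN map}. Upon subtracting the two expressions, the fractional Dirichlet forms $\int_{\R^n_T}(-\Delta)^{s/2}v_f\,(-\Delta)^{s/2}v_g\,dxdt$ cancel exactly, and so do the $\Omega_e\times\Omega_e$ double integrals, because the latter are symmetric in $f,g$ and independent of the coefficients. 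What remains is
\[
\int_{\Omega_T}\partial_t(\gamma_1^{-1}v_f)\,v_g\,dxdt + \int_{\Omega_T}\gamma_2^{-1}\partial_tv_g\,v_f\,dxdt + \int_{\Omega_T}(Q_{\gamma_1}-Q_{\gamma_2})v_fv_g\,dxdt.
\]

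The next step is to integrate by parts in time in the first of these three terms. Using the product rule $\partial_t(\gamma_1^{-1}v_fv_g)=\partial_t(\gamma_1^{-1}v_f)v_g+\gamma_1^{-1}v_f\partial_tv_g$ together with the initial/terminal conditions $v_f(0)=0$ and $v_g(T)=0$, the boundary contributions at $t=0$ and $t=T$ vanish, so that $\int_{\Omega_T}\partial_t(\gamma_1^{-1}v_f)v_g\,dxdt=-\int_{\Omega_T}\gamma_1^{-1}v_f\partial_tv_g\,dxdt$. Combining this with the remaining term $\int_{\Omega_T}\gamma_2^{-1}\partial_tv_g\,v_f\,dxdt$ produces the desired expression $\int_{\Omega_T}(\gamma_2^{-1}-\gamma_1^{-1})v_f\partial_tv_g\,dxdt$, which together with the potential-difference integral is the claimed identity.

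The main obstacle is the rigorous justification of this time integration by parts, since $\partial_t(\gamma_1^{-1}v_f)$ only lives in $L^2(0,T;H^{-s}(\Omega))$ while $v_g$ is $H^s(\R^n)$-valued in time rather than $\widetilde{H}^s(\Omega)$-valued. To handle this I would decompose $v_g=(v_g-g)+g$; since $g\in C_c^{\infty}((W_2)_T)$ with $W_2\subset\Omega_e$, the exterior datum $g$ vanishes on $\Omega_T$, so only the piece $v_g-g\in L^2(0,T;\widetilde{H}^s(\Omega))$ (with weak time derivative in $L^2(0,T;H^{-s}(\Omega))$) actually appears in the duality pairing, and the Banach-space integration by parts formula from \cite[Chapter~XVIII]{DautrayLionsVol5} applies. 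The same decomposition, together with $v_f=(v_f-f)+f$ where $f$ vanishes on $\Omega_T$, symmetrically legitimizes interpreting the term $\int_{\Omega_T}\gamma_2^{-1}\partial_tv_g\,v_f\,dxdt$ as a well-defined duality pairing.
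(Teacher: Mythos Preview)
Your proof is correct and follows essentially the same route as the paper: rewrite $\langle \mathcal{N}_{Q_{\gamma_2}}f,g\rangle$ as $\langle \mathcal{N}^{\ast}_{Q_{\gamma_2}}g,f\rangle$, expand both DN maps via Proposition~\ref{prop: alternative def of DN map schroedinger} and Lemma~\ref{lemma: properties adjoint DN map}\,\ref{item alternative representation of adjoint DN map} using $v_g$ and $v_f$ as the respective extensions, cancel the matching fractional Dirichlet forms and $\Omega_e\times\Omega_e$ integrals, and integrate by parts in time using $v_f(0)=0$, $v_g(T)=0$. Your additional care in justifying the time integration by parts via the decomposition $v_g=(v_g-g)+g$ is in fact more explicit than the paper, which simply invokes the initial/terminal conditions.
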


\begin{proof}
	By Lemma~\ref{lemma: properties adjoint DN map}, Proposition~\ref{prop: alternative def of DN map schroedinger} we have
	\[
	\begin{split}
		&\langle(\mathcal{N}_{Q_{\gamma_1}}-\mathcal{N}_{Q_{\gamma_2}})f,g\rangle \\
		=&\langle \mathcal{N}_{Q_{\gamma_1}}f,g\rangle -\langle \mathcal{N}_{Q_{\gamma_2}}f,g\rangle\\
		=&\langle \mathcal{N}_{Q_{\gamma_1}}f,g\rangle -\langle \mathcal{N}^{\ast}_{Q_{\gamma_2}}g,f\rangle\\
		=&\int_{\Omega_T}\partial_t(\gamma_1^{-1}v_f)v_g\,dxdt+\int_{\R^n_T}(-\Delta)^{s/2}v_f(-\Delta)^{s/2}v_g\,dxdt+\int_{\Omega_T}Q_{\gamma_1}v_fv_g\,dxdt\\
		& -\frac{C_{n,s}}{2}\int_0^T\int_{\Omega_e\times \Omega_e}\frac{(f(x,t)-f(y,t))(g(x,t)-g(y,t))}{|x-y|^{n+2s}}\,dxdy\,dt\\
		&+\int_{\Omega_T}\gamma_2^{-1}\partial_t v_gv_f\,dxdt-\int_{\R^n_T}(-\Delta)^{s/2}v_g(-\Delta)^{s/2}v_f\,dxdt-\int_{\Omega_T}Q_{\gamma_2}v_gv_f\,dxdt\\
		& +\frac{C_{n,s}}{2}\int_0^T\int_{\Omega_e\times \Omega_e}\frac{(f(x,t)-f(y,t))(g(x,t)-g(y,t))}{|x-y|^{n+2s}}\,dxdy\,dt\\
		=&\int_{\Omega_T}\partial_t(\gamma_1^{-1}v_f)v_g\,dxdt+\int_{\Omega_T}\gamma_2^{-1}v_f\partial_tv_g\,dxdt+\int_{\Omega_T}(Q_{\gamma_1}-Q_{\gamma_2})v_gv_f\,dxdt\\
		=&\int_{\Omega_T}(\gamma_2^{-1}-\gamma_1^{-1})v_f\partial_tv_g\,dxdt+\int_{\Omega_T}(Q_{\gamma_1}-Q_{\gamma_2})v_gv_f\,dxdt\\
	\end{split}
	\]
	where we used for the integration by parts that $v_f(0)=0$ and $v_g(T)=0$.
\end{proof}

\subsection{Approximation property}\label{sec: runge approx.}

To prove the interior uniqueness result of $\gamma$, we derive an approximation property of solutions to the Schr\"odinger type equations. First we introduce the source to solution map, which is ususally called Poisson operator.
Assume that $\Omega\subset \R^n$ is an open set bounded in one direction, $0<T<\infty$, $0<s<\min(1,n/2)$, $\gamma_0>0$ and $\gamma\in \Gamma_{s,\gamma_0}(\R^n_T)$ and $W\subset \Omega_e$ is a nonempty open set. With the well-posedness of \eqref{eq: weak solutions of Schrodinger equation existence}, we can define the Poisson operator $P_{\gamma}$ such that 
\begin{align}\label{Poisson op}
	P_{\gamma}: C^\infty_c(W_T)\to  H^1(0,T\,;L^2(\Omega))\cap L^2(0,T\,;H^s(\R^n)), \quad  f\mapsto v_f,
\end{align}
where $v_f\in H^1(0,T\,;L^2(\Omega))\cap L^2(0,T\,;H^s(\R^n))$ is the unique solution of
\begin{align}\label{runge equ}
	\begin{cases}
		\partial_t\LC \gamma^{-1}v\RC +\LC (-\Delta)^s+Q_{\gamma}\RC v=0 & \text{ in } \Omega_T,\\
		v=f &\text{ in }(\Omega_e)_T,\\
		v(0)=0 &\text{ in }\Omega,
	\end{cases}
\end{align}
with $v_f-f\in  H^1(0,T\,;L^2(\Omega))\cap L^2(0,T\,; \wt H^s(\Omega))$.

Next before studying the Runge approximation for the equation \eqref{runge equ}, let us recall the UCP for the fractional Laplacian (see e.g.~ \cite[Theorem 1.2]{GSU20} for functions in $H^r$ or \cite[Theorem~2.2]{KRZ2022Biharm} in $H^{r,p}$).

\begin{proposition}[Unique continuation for the fractional Laplacian]\label{Prop: UCP}
	For $n\in \N$ and $s\in (0,1)$, let $w\in H^{-r}(\R^n)$ for some $r\in \R$. Given a nonempty open subset $W\subset \R^n$, then 
	$w=(-\Delta)^s w=0$ in $W$ implies that $w\equiv 0$ in $\R^n$.
\end{proposition}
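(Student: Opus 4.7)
The plan is to establish the unique continuation property via the Caffarelli--Silvestre extension combined with a strong UCP for the resulting degenerate elliptic operator on the upper half-space $\R^{n+1}_+ \vcentcolon = \R^n \times (0,\infty)$. For sufficiently regular $w$, I would construct the $s$-harmonic extension $U \colon \R^{n+1}_+ \to \R$ of $w$ via convolution with the $s$-Poisson kernel, so that $U$ solves
\[
\Div(y^{1-2s}\nabla U) = 0 \quad \text{in } \R^{n+1}_+,
\]
with trace $U|_{y=0} = w$ and weighted conormal derivative
\[
-\lim_{y\to 0^+} y^{1-2s}\partial_y U = c_s(-\Delta)^s w
\]
for an explicit positive constant $c_s$, both interpreted in suitable weighted distributional spaces when $w$ is only a distribution.

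The vanishing hypotheses $w|_W = 0$ and $((-\Delta)^s w)|_W = 0$ then translate into matching zero trace and zero weighted conormal derivative of $U$ on the flat piece $W \times \{0\}$. This allows me to reflect $U$ evenly across $W \times \{0\}$, yielding a weak solution $\widetilde U$ of the analogous $A_2$-Muckenhoupt-degenerate equation with weight $|y|^{1-2s}$ on an open set of $\R^{n+1}$ whose intersection with the hyperplane $\{y=0\}$ is $W$. I would then invoke the strong unique continuation principle for such $A_2$-weighted second-order elliptic operators (a result of R\"uland, building on Fall--Felli and others), which forces $\widetilde U$ to vanish identically on the reflected domain, since it vanishes on an $n$-dimensional interior set. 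Restricting back to $\R^{n+1}_+$ and taking the trace at $y = 0$ then yields $w \equiv 0$ on $\R^n$.

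The main obstacle is the very general regularity assumption $w \in H^{-r}(\R^n)$, because naive mollification does not preserve the local vanishing on $W$. To circumvent this I would either (i) localize by an arbitrarily small interior open subset $W' \Subset W$ and choose the mollification parameter $\epsilon$ so small that $w * \rho_\epsilon$ and $(-\Delta)^s(w * \rho_\epsilon)$ still vanish on $W'$, apply the regular version of the UCP to conclude $w * \rho_\epsilon \equiv 0$, and then pass to the limit while exhausting $W$ by such subsets; or (ii) follow the approach of \cite[Theorem~2.2]{KRZ2022Biharm}, working directly in Bessel potential spaces $H^{r,p}$ and interpreting the Caffarelli--Silvestre extension and its boundary data in a weighted distributional framework, where the strong UCP for the reflected weighted equation still applies at this level of regularity.
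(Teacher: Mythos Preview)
The paper does not give its own proof of this proposition; it is simply recalled from the literature with references to \cite[Theorem~1.2]{GSU20} (for $H^r$) and \cite[Theorem~2.2]{KRZ2022Biharm} (for $H^{r,p}$). Your outline via the Caffarelli--Silvestre extension, even reflection across $W\times\{0\}$, and strong unique continuation for the resulting $A_2$-degenerate elliptic operator is exactly the strategy behind those references, so in substance you are reproducing the cited proof.

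Two small comments. First, the phrase ``since it vanishes on an $n$-dimensional interior set'' is not quite the hypothesis of strong UCP: what one actually needs is vanishing to infinite order at a point of $W\times\{0\}$, and this is obtained in the cited works from the simultaneous vanishing of both Cauchy data via Carleman/doubling estimates for the degenerate operator (this is the nontrivial analytic core of R\"uland's and Fall--Felli's arguments). Second, your mollification reduction (option~(i)) is clean and correct: since $(-\Delta)^s$ commutes with convolution and $w*\rho_\epsilon\in H^t(\R^n)$ for every $t\in\R$, one fixed $W'\Subset W$ and any $\epsilon<\dist(W',\partial W)$ already give $w*\rho_\epsilon\equiv 0$ globally by the regular case, and then $\epsilon\to 0$ yields $w\equiv 0$; the exhaustion of $W$ is unnecessary.
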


\begin{proposition}[Runge approximation]\label{Prop: runge}
	Let $\Omega\subset \R^n$ be an open set bounded in one direction, $0<T<\infty$, $0<s<\min(1,n/2)$, $\gamma_0>0$ and $\gamma\in \Gamma_{s,\gamma_0}(\R^n_T)$ and $W\subset \Omega_e$ be a nonempty open set. Let $P_\gamma$ be the Poisson operator given by \eqref{Poisson op}, and 
	\begin{align}
		\mathcal{R}\vcentcolon =\left\{v_f-f\,; \, f\in C_c^{\infty}(W_T)\,\right\}.
	\end{align}
	Then the set $\mathcal{R}$ is dense in $L^2(0,T\,; \wt H^s(\Omega))$.
\end{proposition}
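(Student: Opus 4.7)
The plan is to apply the standard Hahn--Banach plus unique continuation strategy. Since $\mathcal{R}$ is a linear subspace of $L^2(0,T\,;\widetilde{H}^s(\Omega))$, it suffices to show that every functional $F$ in the dual space $L^2(0,T\,;H^{-s}(\Omega))$ satisfying $\int_0^T\langle F,v_f-f\rangle\,dt=0$ for all $f\in C_c^\infty(W_T)$ must vanish.

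Given such an $F$, I would let $w$ be the unique solution (provided by Proposition~\ref{prop: solution schroedinger eqs}) of the backward adjoint problem
$$-\gamma^{-1}\partial_t w+((-\Delta)^s+Q_\gamma)w=F\ \text{in}\ \Omega_T,\quad w=0\ \text{in}\ (\Omega_e)_T,\quad w(T)=0\ \text{in}\ \Omega,$$
and set $\phi\vcentcolon = v_f-f\in L^2(0,T\,;\widetilde{H}^s(\Omega))$. Since $f$ vanishes identically on $\Omega$, the function $\phi$ satisfies $\partial_t(\gamma^{-1}\phi)+((-\Delta)^s+Q_\gamma)\phi=-(-\Delta)^s f$ in $\Omega_T$. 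Pairing the adjoint equation with $\phi$ (equivalently, pairing the equation for $\phi$ with $w$) and integrating by parts in time -- the boundary contributions vanish because $w(T)=0$ and $\phi(0)=v_f(0)-f(0)=0$ -- together with the symmetry of the fractional Dirichlet form yields
$$\int_0^T\langle F,\phi\rangle\,dt=-\int_0^T\int_\Omega w\,(-\Delta)^s f\,dx\,dt=-\int_0^T\int_W f\,(-\Delta)^s w\,dx\,dt,$$
where in the last step I exploit that $w$ is supported in $\overline{\Omega}$ and $f$ in $W$, together with the self-adjointness of the fractional Laplacian on $H^s(\R^n)$.

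The hypothesis on $F$ forces the last integral to vanish for every $f\in C_c^\infty(W_T)$, hence $(-\Delta)^s w=0$ in $W_T$. Since also $w=0$ in $W_T\subset(\Omega_e)_T$, an application of Proposition~\ref{Prop: UCP} in each time slice -- using that $w(\cdot,t)\in\widetilde{H}^s(\Omega)\subset H^s(\R^n)$ for a.e.\ $t$ -- gives $w(\cdot,t)\equiv 0$ in $\R^n$ for almost every $t$, so $w\equiv 0$ in $\R^n_T$; substituting back into the adjoint PDE yields $F=0$, the desired contradiction. I expect the main technical point to be the rigorous justification of the temporal integration by parts: since $\partial_t w$ only lies in $L^2(0,T\,;H^{-s}(\Omega))$ rather than in $L^2(\Omega_T)$, one must pair it with $\gamma^{-1}\phi\in L^2(0,T\,;\widetilde{H}^s(\Omega))$ via the Bochner-space duality of \cite[Chapter XVIII]{DautrayLionsVol5} and check that the initial/terminal traces vanish in the corresponding weak sense; the spatial manipulations are then routine once the support property of $w$ is used.
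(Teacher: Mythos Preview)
Your proposal is correct and follows essentially the same Hahn--Banach plus adjoint-equation plus UCP strategy as the paper. The only cosmetic difference is that the paper splits $v_f-f$ into the two summands and uses the forward equation for $v_f$ directly, whereas you first record the inhomogeneous equation satisfied by $\phi=v_f-f$ and then pair; both routes lead to the same identity $\langle F,v_f-f\rangle=-\int_0^T\langle(-\Delta)^{s/2}w,(-\Delta)^{s/2}f\rangle\,dt$ and the same conclusion.
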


\begin{proof}
	By using Theorem \ref{thm:fractionalLiouvilleReduction}, one has $\mathcal{R}\subset   L^2(0,T;\, \wt H^s(\Omega))$. In order to show the density, by the Hahn-Banach theorem, we only need to show that for any $F\in (L^2(0,T;\, \wt H^s(\Omega)))^\ast =L^2(0,T;\, H^{-s}(\Omega))$,
	such that $\langle F, w\rangle=0$\footnote{Here $\langle F, w\rangle=\int_{\Omega_T} F w \, dxdt$ denotes the duality pairing, for $F\in L^2(0,T;\,  H^{-s}(\Omega))$ and $w\in L^2(0,T;\, \wt H^s(\Omega))$.},
	%\footnote{J: I'm still not convinced that this can be argued this way. If we just pose density only in $L^2(\Omega_T)$ (rather than $\mathcal{H}$), then it is clear that $(L^2(\Omega_T))^*=L^2(\Omega_T)$ and we can proceed with Hahn-Banach and current argument. This should be already sufficient later for the inverse problem. Otherwise, we would really like to have well-posedness for $F \in \mathcal{H}^*$ so that every such $F$ can be written with a solution and the bilinear form. What is now claimed seems really that it should also imply well-posedness with sources in $\mathcal{H}^*$.}
	for any $w\in \mathcal{R}$, then $F$ must be zero. 
	Via $\langle F, w\rangle=0$ for any $w\in \mathcal{R}$, we have 
	\begin{align}
		\langle F,  P_{\gamma}f-f\rangle=0, \text{ for any } f\in C^\infty_c (W_T).
	\end{align}

	We next claim that 
	\begin{align}\label{pair-integ}
		\langle F,  P_{\gamma}f-f\rangle=-\int_{\R^n}(-\Delta)^{s/2} f (-\Delta)^{s/2} \varphi \, dxdt, \text{ for any }f\in C^\infty_c(W_T),
	\end{align}
	where $\varphi\in L^2(0,T;\, H^s(\R^n))$ with $\p _t \varphi \in  L^2(0,T;\, H^{-s}(\Omega))$ (see Proposition \ref{prop: solution schroedinger eqs}) is the unique solution of the adjoint equation 
	\begin{align}
		\begin{cases}
			-\gamma^{-1}\partial_t \varphi +\LC (-\Delta)^s+Q_{\gamma}\RC \varphi= F  & \text{ in }\Omega_T,\\
			\varphi= 0  & \text{ in } (\Omega_e)_T,\\
			\varphi(T) =0 & \text{ in } \Omega.
		\end{cases}
	\end{align}
	In fact, by direct computations, one has that $v_f = P_{\gamma}f$ and 
	\begin{align}
		\begin{split}
			\langle F,  P_{\gamma}f-f\rangle = &\int_{\Omega_T} \LC -\gamma^{-1}\partial_t \varphi +Q_{\gamma}\RC\LC v_f-f\RC\,dxdt \\
			&+\int_{\R^n_T} (-\Delta)^{s/2}\varphi\,  (-\Delta)^{s/2}\LC v_f-f\RC  dxdt\\
			= &\int_{\Omega_T}\LC -\gamma^{-1}\partial_t \varphi +Q_{\gamma}\varphi \RC    v_f \, dxdt +\int_{\R^n_T} (-\Delta)^{s/2} \varphi \,  (-\Delta)^{s/2} v_f dxdt  \\
			& -\int_{\R^n_T} (-\Delta)^{s/2} \varphi \,  (-\Delta)^{s/2} f dxdt\\
			=& \underbrace{\int_{\Omega_T}  \LC \partial_t(\gamma^{-1}v_f)  + Q_\gamma v_f \RC \varphi \, dxdt+\int_{\R^n_T}(-\Delta)^{s/2} v_f\, (-\Delta)^{s/2}\varphi\,dxdt}_{=0\text{, since }v_f=P_{\gamma}f} \\
			&-\int_{\R^n_T}(-\Delta)^{s/2} \varphi  \, (-\Delta)^{s/2} f \, dxdt \\
			=&-\int_{\R^n_T}(-\Delta)^{s/2} \varphi \, (-\Delta)^{s/2} f \, dxdt,
		\end{split}
	\end{align}
	where we used that $v_f(0)=\varphi(T)=0$ for the integration by parts in the third equality sign and the integral involving the time derivative has to be understood in a weak sense. This shows the identity \eqref{pair-integ}. Finally, the identity \eqref{pair-integ} is equivalent to 
	\begin{align}
		(-\Delta)^s\varphi=0\quad\text{in}\quad W_T.
	\end{align}
	Thus, the function $\varphi$ satisfies $\varphi =(-\Delta)^s \varphi =0 $ in $W_T$, by Proposition \ref{Prop: UCP}, then we have $\varphi \equiv 0$ in $\R^n_T$, so that $F\equiv 0$ in $\R^n_T$. In summary, we showed that the set $\mathcal{R}$ is dense in $L^2(0,T;\, \wt H^s(\Omega))$. This proves the assertion.
\end{proof}

\begin{remark}\label{Rmk: runge}
	Note that 
	\begin{itemize}
		\item[(i)] By Proposition \ref{Prop: runge}, we know that given any $\phi \in L^2(0,T;\, \wt H^s(\Omega))$, there exists a sequence of solutions $\{v_{f_k}\}_{k\in \N}\in L^2(0,T;\, H^s(\R^n))$ to \eqref{runge equ} with $f=f_k$, such that 
		$$
		v_{f_k}-f_k \to \phi \text{ in } L^2(0,T;\, \wt H^s(\Omega)) \text{ as } k\to \infty.
		$$
		Since $v_{f_k}$ is a solution, by applying Proposition \ref{prop: solution schroedinger eqs}, we can see that $\p_t v_{f_k}\in L^2(0,T;\, H^{-s}(\Omega))$. Now assume that the time derivative of $\phi$ belongs to $L^2(0,T\,;H^{-s}(\Omega))$, then we have
		%(or $\p_t (v_{f_k}-f_k)\in L^2(0,T;\, H^{-s}(\Omega))$) 
		\begin{align}
			\lim_{k\to \infty}\int_{\Omega_T} \p_t (v_{f_k}-f_k) \varphi \, dxdt =&-\lim_{k\to \infty}\int_{\Omega_T} (v_{f_k}-f_k) \p_t \varphi \, dxdt \\
			=& -\int_{\Omega_T} \phi \p_t \varphi \, dxdt\\
			=&\int_{\Omega_T} (\p_t\phi)  \varphi \, dxdt,
		\end{align}
		for any $\varphi \in L^2(0,T;\, \wt H^s(\Omega))$ with $\p_t \varphi \in L^2(0,T;\, H^{-s}(\Omega))$ and $\varphi(T)=0$. 
		
		\item[(ii)] By using similar arguments as in the proof of Proposition \ref{Prop: runge}, one can show that the Runge approximation holds for the adjoint diffusion equation 
		\begin{align}\label{runge adj equ}
			\begin{cases}
				-\gamma^{-1}\partial_tv^\ast  +\LC (-\Delta)^s+Q_{\gamma}\RC v^\ast =0 & \text{ in } \Omega_T,\\
				v^\ast=g &\text{ in }(\Omega_e)_T,\\
				v^\ast(T)=0 &\text{ in }\Omega,
			\end{cases}
		\end{align}
		In other words, given a nonempty open set $W\subset \Omega_e$, the set 
		$$
		\mathcal{R}^{\ast}\vcentcolon = \left\{v^\ast_g -g ;\, g\in C^\infty_c(W_T)\right\}
		$$ 
		is dense in $L^2(0,T\,;\widetilde{H}^s(\Omega))$.
	\end{itemize}
\end{remark}

\subsection{Interior determination and proof of Theorem~\ref{Theorem: General formulation}}\label{sec: interior det}

Let us state the interior uniqueness result.

\begin{theorem}[Interior uniqueness]\label{Thm: interior uniqueness}
	Let $\Omega\subset \R^n$ be an open set bounded in one direction, $0<T<\infty$, $0<s<\min(1,n/2)$, $\gamma_0>0$ and $\gamma_j\in \Gamma_{s,\gamma_0}(\R^n_T)$ for $j=1,2$. Assume that $W_1,W_2\subset \Omega_e$ are two disjoint nonempty open sets and $\Gamma\in \Gamma_{s,\gamma_0}(\R^n_T)$ are such that $\gamma_1(x,t)=\gamma_2(x,t)=\Gamma(x,t)$ for all $(x,t)\in (W_1\cup W_2)_T$ and $\Gamma\in C^{\infty}((W_1\cup W_2)_T)$. Then 
	\begin{align}\label{equal DN inter unique}
		\langle\mathcal{N}_{\gamma_1}f,g\rangle =\langle
		\mathcal{N}_{\gamma_2}f,g\rangle
	\end{align}
	if and only if
	\[
	\gamma_1=\gamma_2  \text{ and }Q_{\gamma_1}=Q_{\gamma_2} \text{ in }\Omega_T.
	\]
\end{theorem}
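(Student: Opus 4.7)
The reverse implication is immediate: if $\gamma_1=\gamma_2$ and $Q_{\gamma_1}=Q_{\gamma_2}$ on $\Omega_T$, the reduced Schr\"odinger equation \eqref{eq: weak solutions of Schrodinger equation existence} has identical structure on $\Omega_T$ for both indices and the same exterior datum, so the well-posedness in Proposition~\ref{prop: solution schroedinger eqs} forces the two solutions $v_f$ to coincide on all of $\R^n_T$; Definition~\ref{def: DN map schroedinger type equation} and Theorem~\ref{Thm: relation of DNs} then yield \eqref{equal DN inter unique}. For the forward direction, I would first apply Theorem~\ref{Thm: relation of DNs} to translate \eqref{equal DN inter unique} into $\langle\mathcal N_{Q_{\gamma_1}}h,k\rangle=\langle\mathcal N_{Q_{\gamma_2}}h,k\rangle$ for all $h\in C_c^\infty((W_1)_T)$ and $k\in C_c^\infty((W_2)_T)$ (the substitution $h=\Gamma^{1/2}f$ is a bijection on $C_c^\infty((W_1)_T)$ since $\Gamma\in C^\infty((W_1\cup W_2)_T)$), and then Proposition~\ref{prop: integral identity} reduces the problem to
\begin{equation}\label{eq: plan int id}
0=\int_{\Omega_T}(\gamma_2^{-1}-\gamma_1^{-1})v_h\,\partial_t v_k\,dxdt+\int_{\Omega_T}(Q_{\gamma_1}-Q_{\gamma_2})v_h v_k\,dxdt,
\end{equation}
where $v_h$ solves \eqref{runge equ} for $\gamma_1,h$ and $v_k$ solves the adjoint \eqref{runge adj equ} for $\gamma_2,k$.

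\textbf{Density step.} Since $h,k$ are supported in $(\Omega_e)_T$, on $\Omega_T$ one has $v_h=v_h-h$ and $v_k=v_k-k$. The plan is to upgrade \eqref{eq: plan int id} to an identity valid for arbitrary $\phi_1,\phi_2\in C_c^\infty(\Omega_T)$ in place of $v_h,v_k$. Using Proposition~\ref{Prop: runge} I would first pick $h_j$ with $v_{h_j}-h_j\to\phi_1$ in $L^2(0,T;\widetilde H^s(\Omega))$; since $A:=\gamma_2^{-1}-\gamma_1^{-1}\in L^\infty$ and $\partial_t v_k\in L^2(0,T;H^{-s}(\Omega))$, passing to the limit replaces $v_h$ by $\phi_1$ in \eqref{eq: plan int id}. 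Next, to exchange $v_k$ for $\phi_2$, I would integrate by parts in $t$ so as to transfer the time derivative from $v_k$ onto $A\phi_1\in C_t^1$ (no boundary contribution: $\phi_1$ is compactly supported in $\Omega_T$ and $v_k(T)=0$). The adjoint Runge approximation of Remark~\ref{Rmk: runge}(ii) then produces $k_j$ with $v_{k_j}-k_j\to\phi_2$ in $L^2(0,T;\widetilde H^s(\Omega))$, and one further integration by parts yields
\begin{equation}\label{eq: plan bilin}
\int_{\Omega_T}A\phi_1\partial_t\phi_2\,dxdt+\int_{\Omega_T}B\phi_1\phi_2\,dxdt=0,\qquad\forall\,\phi_1,\phi_2\in C_c^\infty(\Omega_T),
\end{equation}
with $B:=Q_{\gamma_1}-Q_{\gamma_2}$.

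\textbf{Extracting the coefficients.} From \eqref{eq: plan bilin} I would vary $\phi_2$ for fixed $\phi_1$ to read off the distributional identity $\partial_t(A\phi_1)=B\phi_1$ in $\mathcal D'(\Omega_T)$. Because $A$ and $\partial_tA$ are continuous in $t$ (using $\gamma_j\in C_t^1 C_x$), the product rule applies and gives $A\partial_t\phi_1+(\partial_tA-B)\phi_1=0$ a.e.\ on $\Omega_T$ for every $\phi_1\in C_c^\infty(\Omega_T)$. Specialising to $\phi_1(x,t)=\psi(x)\eta(t)$ with arbitrary $\psi\in C_c^\infty(\Omega)$ and $\eta\in C_c^\infty((0,T))$, and choosing $\eta$ so that $\eta(t_0)$ and $\eta'(t_0)$ may be prescribed independently, I would deduce $\int_\Omega A(x,t_0)\psi(x)\,dx=0$ and $\int_\Omega(B-\partial_tA)(x,t_0)\psi(x)\,dx=0$ for every $t_0$ and every $\psi$; varying $\psi$ yields $A\equiv 0$ and consequently $B\equiv 0$ on $\Omega_T$, which is precisely $\gamma_1=\gamma_2$ and $Q_{\gamma_1}=Q_{\gamma_2}$ on $\Omega_T$.

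\textbf{Main obstacle.} The most delicate point is coordinating the two Runge limits with the intermediate time-integration by parts in \eqref{eq: plan int id}. The functions and their time derivatives sit naturally in the dual pair $L^2(0,T;\widetilde H^s(\Omega))$ / $L^2(0,T;H^{-s}(\Omega))$, and the Runge approximation gives strong convergence only on the $H^s$ side; for the derivatives one has only the weak-type identity in Remark~\ref{Rmk: runge}(i). The time-integration by parts performed between the two Runge limits is what ensures that each passage to the limit is tested against the strong mode of convergence, so that strong convergence of $\partial_tv_h$ or $\partial_tv_k$ is never needed.
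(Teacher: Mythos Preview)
Your proof is correct and follows the same overall strategy as the paper: reduce to the Schr\"odinger DN maps via Theorem~\ref{Thm: relation of DNs}, invoke the integral identity of Proposition~\ref{prop: integral identity}, and use the Runge approximation (with the integration-by-parts interchange you describe) to pass to arbitrary test functions. The paper does exactly this. The only real difference is in the ``extracting the coefficients'' step: instead of deriving the full bilinear identity \eqref{eq: plan bilin} and then arguing distributionally, the paper picks specific approximants from the start---first a \emph{time-independent} $\psi$ (so $\partial_t\psi=0$ kills the $A$-term and yields $B=0$), then $\psi(x,t)=t\eta(x)$ (so $\partial_t\psi=\eta$ and the remaining identity gives $A=0$). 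This shortcut bypasses the distributional product-rule argument and the pointwise ``prescribe $\eta(t_0),\eta'(t_0)$'' reasoning, which in your write-up is slightly informal for a.e.\ identities (though easily made rigorous by choosing two $\eta$'s whose Wronskian is nonvanishing on an interval).

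One minor technical remark: in your first Runge limit you justify the passage by ``$A\in L^\infty$ and $\partial_t v_k\in L^2(0,T;H^{-s}(\Omega))$'', but multiplication by an $L^\infty$ function does not in general preserve $H^{-s}$. The argument is saved either because the coefficients in $\Gamma_{s,\gamma_0}$ actually make $A$ a bounded multiplier on $\widetilde H^s(\Omega)$ (cf.\ Lemma~\ref{useful lemma}\ref{assertion 1}), or more simply because for $k\in C_c^\infty$ the higher regularity of Theorem~\ref{thm:fractionalLiouvilleReduction} gives $\partial_t v_k\in L^2(\Omega_T)$, so the pairing takes place entirely in $L^2$.
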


\begin{proof}
	Via Theorem \ref{Thm: relation of DNs}, let $g\in C^\infty_c((W_2)_T)$, then one has 
	\[
	\langle\mathcal{N}_{\gamma_1}f,g\rangle =\langle
	\mathcal{N}_{\gamma_2}f,g\rangle
	\]
	if and only if
	\begin{equation}
		\label{eq: useful rel interior uniqueness}
		\langle\mathcal{N}_{Q_{\gamma_1}}(\Gamma^{1/2}f),(\Gamma^{1/2}g)\rangle = \langle\mathcal{N}_{Q_{\gamma_2}}(\Gamma^{1/2}f),(\Gamma^{1/2}g)\rangle.
	\end{equation}
	Since $\Gamma$ is uniformly elliptic and smooth on $(W_1\cup W_2)_T$, the condition \eqref{eq: useful rel interior uniqueness} implies
	\[
	\langle\mathcal{N}_{Q_{\gamma_1}}f,g\rangle = \langle\mathcal{N}_{Q_{\gamma_2}}f,g\rangle
	\]
	for all $f\in C_c^{\infty}((W_1)_T)$ and $g\in C_c^{\infty}((W_2)_T)$. Moreover, by Proposition \ref{prop: integral identity}, one has 
	\begin{align}\label{integral id =0 in pf}
		\int_{\Omega_T}(\gamma_2^{-1}-\gamma_1^{-1})v_f\partial_tv^\ast _g\,dxdt+\int_{\Omega_T}(Q_{\gamma_1}-Q_{\gamma_2})v^\ast_gv_f\,dxdt=0,
	\end{align}
	where 
	$v_{f}\in \mathcal{H}\vcentcolon = H^1(0,T\,;L^2(\Omega))\cap L^2(0,T\,;H^S(\R^n))$ and $v_{g}^\ast \in \mathcal{H}$ are the solutions to 
	\begin{align}\label{sols sequ runge}
		\begin{cases}
			\partial_t\LC \gamma_1^{-1}v_{f}\RC +\LC (-\Delta)^s+Q_{\gamma_1}\RC v_{f}=0 & \text{ in } \Omega_T,\\
			v_{f}=f &\text{ in }(\Omega_e)_T,\\
			v_{f}(0)=0 &\text{ in }\Omega,
		\end{cases}
	\end{align}
	and
	\begin{align}\label{sols sequ runge adj}
		\begin{cases}
			-\gamma_2^{-1}\partial_tv_g^\ast  +\LC (-\Delta)^s+Q_{\gamma_2}\RC v_g^\ast =0 & \text{ in } \Omega_T,\\
			v_g^\ast=g &\text{ in }(\Omega_e)_T,\\
			v_g^\ast(T)=0 &\text{ in }\Omega,
		\end{cases}
	\end{align}
	respectively.
	
	\medskip
	
	{\it Step 1. $Q_{\gamma_1}=Q_{\gamma_2}$ in $\Omega_T$.}
	
	\medskip
	
	\noindent Take $\Omega'\Subset \Omega$ and $\psi \in C_c^{\infty}(\Omega)$ with $\psi|_{\overline{\Omega'}}=1$. By extending $\psi$ for all times trivially, we have $\psi \in \mathcal{H}$. Then take $\phi\in C_c^{\infty}(\Omega'_T)$ and apply the Runge approximation (Proposition \ref{Prop: runge} and Remark \ref{Rmk: runge}) to find  sequences $\left\{f_\ell\right\}_{\ell=1}^\infty\subset C^\infty_c((W_1)_T)$ and $\left\{ g_k\right\}_{k=1}^{\infty}\subset C^\infty_c((W_2)_T)$ such that $v_{f_{\ell}}-f_{\ell}\to \phi$ and $v_{g_k}^{\ast}-g_k\to \psi$ as $\ell,k\to \infty$. 
	Here $v_{f_\ell}\in \mathcal{H}$ and $v_{g_k}^{\ast}\in \mathcal{H}$ are the solutions to \eqref{sols sequ runge} and \eqref{sols sequ runge adj} with $f=f_\ell$ and $g=g_k$, respectively. Hence,
	\[
	\lim_{\ell, k\to\infty}\int_{\Omega_T}(\gamma_2^{-1}-\gamma_1^{-1})v_{f_\ell}\partial_tv^\ast_{g_k}\,dxdt=0
	\]
	and 
	\[
	\lim_{\ell,k\to\infty}\int_{\Omega_T}(Q_{\gamma_1}-Q_{\gamma_2})v^\ast _{g_k}v_{f_{\ell}}\,dxdt=\int_{\Omega_T}(Q_{\gamma_1}-Q_{\gamma_2})\psi \phi\,dxdt.
	\]
	Using that $\phi\psi=\phi$, we deduce 
	\[
	\int_{\Omega_T}(Q_{\gamma_1}-Q_{\gamma_2}) \phi\,dxdt=0,
	\]
	for any possible $\phi\in C^{\infty}_c(\Omega_T)$. Thus, one can conclude that $Q_{\gamma_1}=Q_{\gamma_2}$ in $\Omega_T$.
	
	\medskip
	
	{\it Step 2. $\gamma_1=\gamma_2$ in $\Omega_T$.}
	
	\medskip
	
	\noindent  Plugging $Q_{\gamma_1}=Q_{\gamma_2}$ in $\Omega_T$ into \eqref{integral id =0 in pf}, we have 
	\begin{align}
		\int_{\Omega_T}(\gamma_2^{-1}-\gamma_1^{-1})v_f\partial_tv^\ast _g\,dxdt=0, 
	\end{align}
	for any $f\in C^\infty_c((W_1)_T)$ and $g \in C^\infty_c ((W_2)_T)$. Take $\Omega'\Subset \Omega$, $\eta \in C_c^{\infty}(\Omega)$ with $\eta|_{\overline{\Omega'}}=1$ and set $\psi(\cdot, t)=t\eta$. By repeating the arguments as in {\it Step 1}, with the Runge approximation at hand, then one can also conclude that 
	\begin{align}
		\int_{\Omega_T}(\gamma_2^{-1}-\gamma_1^{-1})\phi \,dxdt=0, 
	\end{align}
	for any possible $\phi\in C_c^{\infty}(\Omega_T)$. This ensures $\gamma_1=\gamma_2$ in $\Omega_T$. 
\end{proof}

\begin{proof}[Proof of Theorem~\ref{Theorem: General formulation}]
    First we apply Theorem~\ref{Theorem: Exterior determination} to deduce that $\gamma_1=\gamma_2$ in $W_T$. Then we choose two nonempty disjoint open sets $W_1,W_2\subset W$. By Lemma~\ref{Lemma: old DN implies new DN} the condition \eqref{same DN in thm 1} implies that the identity \eqref{equal DN inter unique} holds for $W_1,W_2$ as chosen initially. Now by using Theorem~\ref{Thm: interior uniqueness} we can conclude that $\gamma_1=\gamma_2$, $Q_{\gamma_1}=Q_{\gamma_2}$ in $\Omega_T$. This in turn implies
    \[  
	\begin{split}
		0&=Q_{\gamma_1}-Q_{\gamma_2}=(-\Delta)^s(m_{\gamma_2}-m_{\gamma_1}) \text{ in }\Omega,
	\end{split}
	\]
	for a.e. $t\in (0,T)$. Hence, by the UCP (see~\cite[Theorem~2.2]{KRZ2022Biharm}) it follows that $\gamma_1=\gamma_2$ in $\R^n_T$.
\end{proof}

\subsection*{Acknowledgments} Y.-H.L. was partially supported by MOST 111-2628-M-A49-002. J.R. was supported by the Vilho, Yrj\"o and Kalle V\"ais\"al\"a Foundation of the Finnish Academy of Science and Letters.

\appendix

\section{Discussion of nonlocal normal derivatives and DN maps}
\label{sec: Discussion of nonlocal normal derivatives and DN maps}

In this section, we motivate the definition of the nonlocal Neumann derivatives $\mathcal{N}_{\gamma}$ and $\mathcal{N}_{Q_{\gamma}}$, which underly the Definition~\ref{def: DN maps} and \ref{def: DN map schroedinger type equation}. We restrict here our attention to time independent functions for simplicity. First recall that from \cite[Lemma~3.3]{XavierNeumannBdry}, one has the nonlocal integration by parts formula
\begin{equation}
	\label{eq: Gauss theorem fractional Laplacian}
	\begin{split}
		&\int_{\Omega_e}(\mathcal{N}_su)v\,dx+\int_{\Omega}((-\Delta)^su)v\,dx \\
		=&\frac{C_{n,s}}{2}\int_{\R^{2n}\setminus(\Omega_e\times\Omega_e)}\frac{(u(x)-u(y))(v(x)-v(y))}{|x-y|^{n+2s}}\,dxdy
	\end{split}
\end{equation}
for all $u,v\in C^2(\R^n)$, where 
\begin{equation}
	\label{eq: nonlocal normal derivative}
	\mathcal{N}_su(x)\vcentcolon = C_{n,s}\int_{\Omega}\frac{u(x)-u(y)}{|x-y|^{n+2s}}\,dy
\end{equation}
denotes the nonlocal normal derivative for $x\in \Omega_e$ and sufficiently regular functions $u\colon\R^n\to\R$. Here $C_{n,s}$ is the same constant given by \eqref{C_ns}.

Next, we want to show that a similar formula holds for the fractional conductivity operator studied in this work. For simplicity assume $u,\phi\in C_c^{\infty}(\R^n)$ and denote the duality pairing between $H^s(\R^n)$ and $H^{-s}(\R^n)$ by $\langle\cdot,\cdot\rangle$. Then we have
\begin{equation}
	\label{Intro: eq: strong formulation}
	\begin{split}
		\langle \Div_s(\Theta_{\gamma}\nabla^su),\phi\rangle=&\frac{C_{n,s}}{2}\int_{\R^n}\int_{\R^n}\frac{\gamma^{1/2}(x)\gamma^{1/2}(y)}{|x-y|^{n+2s}}(u(x)-u(y))(\phi(x)-\phi(y))\,dxdy\\
		=&\frac{C_{n,s}}{2}\,\int_{\R^n}\left(\int_{\R^n}\frac{\gamma^{1/2}(y)}{|x-y|^{n+2s}}(u(x)-u(y))\,dy\right)\gamma^{1/2}(x)\phi(x)\,dx\\
		&-\frac{C_{n,s}}{2}\,\int_{\R^n}\left(\int_{\R^n}\frac{\gamma^{1/2}(x)}{|x-y|^{n+2s}}(u(x)-u(y))\,dx\right)\gamma^{1/2}(y)\phi(y)\,dy\\
		=&C_{n,s}\,\int_{\R^n}\left(\int_{\R^n}\frac{\gamma^{1/2}(y)}{|x-y|^{n+2s}}(u(x)-u(y))\,dy\right)\gamma^{1/2}(x)\phi(x)\,dx\\
		=&\int_{\R^n}L_{\gamma}u(x)\phi(x)\,dx,
	\end{split}
\end{equation}
where we set
\begin{equation}
	\label{Intro: eq: Strong conductivity operator 1}
	L_{\gamma}^su(x)\vcentcolon =C_{n,s}\gamma^{1/2}(x)\int_{\R^n}\frac{\gamma^{1/2}(y)}{|x-y|^{n+2s}}(u(x)-u(y))\,dy.
\end{equation}

Now, let us define the (general) nonlocal Neumann derivative by
\begin{equation}
	\label{eq: neumann derivative conductivity eq}
	\mathcal{N}_s^{\gamma}u(x)\vcentcolon = C_{n,s}\gamma^{1/2}(x)\int_{\Omega}\gamma^{1/2}(y)\frac{u(x)-u(y)}{|x-y|^{n+2s}}\,dy
\end{equation}
for $x\in \Omega_e$ and sufficiently regular functions $u\colon \R^n\to\R$ (in this section we write the superscript $\gamma$ to distinguish it from the normal derivative in \eqref{eq: nonlocal normal derivative}). With this definition we have
\begin{equation}
	\label{eq: Gauss theorem fractional conductivity equation}
	\begin{split}
		&\frac{C_{n,s}}{2}\int_{\R^{2n}\setminus(\Omega_e\times\Omega_e)}\gamma^{1/2}(x)\gamma^{1/2}(y)\frac{(u(x)-u(y))(v(x)-v(y))}{|x-y|^{n+2s}}\,dxdy\\
		=&\int_{\Omega}L_{\gamma}^su(x) v(x)\,dx+\int_{\Omega_e}v(x)\mathcal{N}_s^{\gamma}u(x)\,dx.
	\end{split}
\end{equation}
To see this, observe that $\R^{2n}\setminus(\Omega_e\times\Omega_e)=(\Omega\times \R^n)\cup (\Omega_e\times\Omega)$ and hence
\[
\begin{split}
	&\int_{\R^{2n}\setminus(\Omega_e\times\Omega_e)}\gamma^{1/2}(x)\gamma^{1/2}(y)\frac{(u(x)-u(y))(v(x)-v(y))}{|x-y|^{n+2s}}\,dxdy\\
	=&\int_{\R^{2n}\setminus(\Omega_e\times\Omega_e)}\gamma^{1/2}(x)\gamma^{1/2}(y)\frac{u(x)-u(y)}{|x-y|^{n+2s}}v(x)\,dxdy\\
	&-\int_{\R^{2n}\setminus(\Omega_e\times\Omega_e)}\gamma^{1/2}(x)\gamma^{1/2}(y)\frac{u(x)-u(y)}{|x-y|^{n+2s}}v(y)\,dxdy\\
	=&2\int_{\R^{2n}\setminus(\Omega_e\times\Omega_e)}\gamma^{1/2}(x)\gamma^{1/2}(y)\frac{u(x)-u(y)}{|x-y|^{n+2s}}v(x)\,dxdy\\
	=&2\int_{\Omega}v(x)\left(\int_{\R^n}\gamma^{1/2}(x)\gamma^{1/2}(y)\frac{u(x)-u(y)}{|x-y|^{n+2s}}\,dy\right)dx\\
	&+ 2\int_{\Omega_e}v(x)\left(\int_{\Omega}\gamma^{1/2}(x)\gamma^{1/2}(y)\frac{u(x)-u(y)}{|x-y|^{n+2s}}\,dy\right)\,dx.
\end{split}
\]
By \eqref{Intro: eq: Strong conductivity operator 1} and \eqref{eq: neumann derivative conductivity eq} this implies the identity \eqref{eq: Gauss theorem fractional conductivity equation}. From the identity \eqref{eq: Gauss theorem fractional conductivity equation} we make the following observations:

\begin{enumerate}[(i)]
	\item\label{property 1}  There holds
	\begin{equation}
		\label{eq: testing with functions supported in exterior}
		\begin{split}
			&\frac{C_{n,s}}{2}\int_{\R^{2n}\setminus(\Omega_e\times\Omega_e)}\gamma^{1/2}(x)\gamma^{1/2}(y)\frac{(u(x)-u(y))(v(x)-v(y))}{|x-y|^{n+2s}}\,dxdy\\
			=&\int_{\Omega_e}v(x)\mathcal{N}_s^{\gamma}u(x)\,dx.
		\end{split}
	\end{equation}
	for all $v\in C_c^{\infty}(\Omega_e)$ and so coincides with our weak formulation.
	\item\label{property 2} We have
	\begin{equation}
		\label{eq: test functions}
		\begin{split}
			&\frac{C_{n,s}}{2}\int_{\R^{2n}\setminus(\Omega_e\times\Omega_e)}\gamma^{1/2}(x)\gamma^{1/2}(y)\frac{(u(x)-u(y))(v(x)-v(y))}{|x-y|^{n+2s}}\,dxdy\\
			=&\int_{\Omega}L_{\gamma}^su(x) v(x)\,dx\\
			=&\frac{C_{n,s}}{2}\int_{\R^{2n}}\frac{\gamma^{1/2}(x)\gamma^{1/2}(y)}{|x-y|^{n+2s}}(u(x)-u(y))(v(x)-v(y))\,\,dxdy
		\end{split}
	\end{equation}
	for all $v\in C_c^{\infty}(\Omega)$.
\end{enumerate}
Note that all observations above hold for a general symmetric kernel $K(x,y)$. Combining the assertion \ref{property 1} and \ref{property 2}, we see that:
\begin{enumerate}[(a)]
	\item The notion of solutions in the survey article for elliptic nonlocal equations in \cite{ros2016nonlocal} and the definitions adapted in this article are the same. The former one have the advantage that one can study solutions to nonlocal Dirichlet problems, where the exterior conditions $f$ are less regular.
	\item We have 
	\[
	\langle \mathcal{N}_s^{\gamma}f,g\rangle =\langle \mathcal{N}_s^{\gamma}f,g'\rangle,
	\]
	whenever $g,g'\in H^s(\R^n)$ satisfy $g-g'\in\widetilde{H}^s(\Omega)$, where $\mathcal{N}_sf$ is the nonlocal normal derivative of the unique solution $u_f$ to the homogeneous fractional conductivity equation with exterior value $f$. Hence, its again well-defined on the trace space $X=H^s(\R^n)/\widetilde{H}^s(\Omega)$.
\end{enumerate}

Moreover, let us point out that in \cite{RZ2022unboundedFracCald,RZ2022LowReg} we used the following definition of DN map
\begin{equation}
	\label{eq: DN map elliptic}
	\langle \Lambda_{\gamma}f,g\rangle=\frac{C_{n,s}}{2}\int_{\R^{2n}}\gamma^{1/2}(x)\gamma^{1/2}(y)\frac{(u_f(x)-u_f(y))(g(x)-g(y))}{|x-y|^{n+2s}}\,dxdy
\end{equation}
for all $f,g\in C_c^{\infty}(\Omega_e)$. These two are related as follows 
\begin{equation}
	\label{eq: relation different DN maps}
	\begin{split}
		\langle \Lambda_{\gamma}f,g\rangle =&\langle\mathcal{N}_s^{\gamma}f,g\rangle+\frac{C_{n,s}}{2}\int_{\Omega_e\times\Omega_e}\gamma^{1/2}(x)\gamma^{1/2}(y)\frac{(u_f(x)-u_f(y))(g(x)-g(y))}{|x-y|^{n+2s}}\,dxdy\\
		=&\langle\mathcal{N}_s^{\gamma}f,g\rangle+\frac{C_{n,s}}{2}\int_{\Omega_e\times\Omega_e}\gamma^{1/2}(x)\gamma^{1/2}(y)\frac{(f(x)-f(y))(g(x)-g(y))}{|x-y|^{n+2s}}\,dxdy.
	\end{split}
\end{equation}
\begin{remark}
	We may now observe that the additional information on the set $\Omega_e\times\Omega_e$ precisely allows to carry out the exterior determination. This shows that $\Lambda_{\gamma}$ carries more information.
\end{remark}
As a matter of fact, the definition $\mathcal{N}_{\gamma}^s$ is natural since it has a clear PDE interpretation although we cannot prove with it our exterior determination result. 

%A way out of this dilemma would be:
%\begin{enumerate}[(A)]
%	\item Formulate interior uniqueness as it is.
%	\item Then say if we know in fact that $\langle\Lambda_{\gamma_1}f,f\rangle=\langle\Lambda_{\gamma_2}f,f\rangle$ then we can do exterior determination (should work in same way as for elliptic case) to conclude $\gamma_1=\gamma_2$ in $(W_1\cup W_2)_T$. But then  $\langle\mathcal{N}_s^1f,g\rangle=\langle\mathcal{N}_s^2f,g\rangle$ by \eqref{eq: relation different DN maps} if $f,g$ have disjoint supports. Since if $f,g$ have disjoint supports then $(f(x)-f(y))(g(x)-g(y))=-(f(y)g(x)+f(x)g(y))$.
%\end{enumerate}
%So we could still present both parts. It also clarifies a lot of stuff in my opinion. Also the relation $\Lambda_{\gamma_1}=\Lambda_{\gamma_2}$ could now maybe used to construct counterexamples. So the whole stuff we wanted to have should go through.

Finally, we discuss the situation for constant coefficient operators like the fractional Schr\"odinger equation
\begin{equation}
	\label{eq: Schroedinger operator}
	\begin{cases}
		\LC (-\Delta)^s+q\RC u=0 &\text{ in }\Omega,\\
		u=f  &\text{ in }\Omega_e.
	\end{cases}
\end{equation}
In \cite{GSU20}, the authors defined the DN map $\Lambda_q$ related to this exterior value problem by
\[
\langle\Lambda_qf,g\rangle=\int_{\R^n}(-\Delta)^{s/2}u_f(-\Delta)^{s/2}v_g\,dx+\int_{\Omega}qu_fv_g\,dx
\]
for all $f,g\in H^s(\R^n)/ \widetilde{H}^s(\Omega)$, where $u_f\in H^s(\R^n)$ is the weak solution to \eqref{eq: Schroedinger operator} and $v_g\in H^s(\R^n)$ an extension of $g$. In the special case $g\in C_c^{\infty}(\Omega_e)$ one has
\[
\langle\Lambda_qf,g\rangle=\int_{\R^n}(-\Delta)^{s/2}u_f(-\Delta)^{s/2}g\,dx,
\]
since we are only integrating over $\Omega$ in the potential and so $q$ is only implicitly contained in the definition of $\Lambda_q$. Then they showed in \cite[Lemma~3.1]{GSU20} that if $\Omega\Subset\R^n$ is smooth and $q\in C_c^{\infty}(\Omega)$ this DN map is simply the restriction $\left.(-\Delta)^su_f\right|_{\Omega_e}$ (as long as the data $f,g$ are sufficiently regular) and in the case $f\in C_c^{\infty}(\Omega_e)$ there holds
\begin{equation}
	\Lambda_qf=\mathcal{N}_sf-mf+\left.(-\Delta)^sf\right|_{\Omega_e},
\end{equation}
where $m(x)=C_{n,s}\int_{\Omega}\frac{dy}{|x-y|^{n+2s}}$ (cf.~\cite[Lemma~A.2]{GSU20}). But this implies in this case that
\[
\Lambda_{q_1}=\Lambda_{q_2}\quad\Longleftrightarrow\quad  \mathcal{N}_s^1=\mathcal{N}_s^2.
\]
If $q$ is possibly nontrivial in the exterior then the notion of solutions to \eqref{eq: Schroedinger operator} is not affected if one introduces the related bilinear form by
\begin{equation}
	\label{eq: our definition}
	B_q(u,v)\vcentcolon = \int_{\R^n}(-\Delta)^{s/2}u(-\Delta)^{s/2}v\,dx+\int_{\R^n}quv\,dx
\end{equation}
for $u,v\in H^s(\R^n)$. This approach was, for example, carried out in \cite{RZ2022unboundedFracCald} or \cite{RS-fractional-calderon-low-regularity-stability}. But then the natural DN map becomes 
\begin{equation}
	\label{eq: new DN map}
	\langle \widetilde{\Lambda}_qf,g\rangle\vcentcolon =\int_{\R^n}(-\Delta)^{s/2}u_f(-\Delta)^{s/2}v_g\,dx+\int_{\R^n}qu_fv_g\,dx
\end{equation}
for all $f,g\in H^s(\R^n)/\widetilde{H}^s(\Omega)$, where $v_g$ is any representative of $g$. These two definitions of DN maps are related as follows
\begin{equation}
	\label{eq: new rel of DN maps}
	\begin{split}
		\langle \widetilde{\Lambda}_qf,g\rangle=&\langle \Lambda_qf,g\rangle+\int_{\Omega_e}qu_fv_g\,dx\\
		=&\langle \Lambda_qf,g\rangle+\int_{\Omega_e}qfg\,dx.
	\end{split}
\end{equation}
Hence, in general if $q$ is not zero in the exterior these two definitions of DN maps are not equivalent and the latter helps to acquire information in the exterior. Therefore if \eqref{eq: new rel of DN maps} $f,g$ have disjoint support then they are equivalent and precisely this lack of knowledge leads to counterexamples to uniqueness (cf.~\cite{RZ2022FracCondCounter}).

\medskip

\bibliography{refs} 

\begin{thebibliography}{CMRU22}

\bibitem[AF92]{AdamsComposition}
David~R. Adams and Michael Frazier.
\newblock Composition operators on potential spaces.
\newblock {\em Proc. Amer. Math. Soc.}, 114(1):155--165, 1992.

\bibitem[AP06]{AP06}
Kari Astala and Lassi P\"{a}iv\"{a}rinta.
\newblock Calder\'{o}n's inverse conductivity problem in the plane.
\newblock {\em Ann. of Math. (2)}, 163(1):265--299, 2006.

\bibitem[BGU21]{bhattacharyya2021inverse}
S.~Bhattacharyya, T.~Ghosh, and G.~Uhlmann.
\newblock Inverse problems for the fractional-{L}aplacian with lower order
  non-local perturbations.
\newblock {\em Trans. Amer. Math. Soc.}, 374(5):3053--3075, 2021.

\bibitem[Bre11]{Brezis}
Haim Brezis.
\newblock {\em Functional analysis, {S}obolev spaces and partial differential
  equations}.
\newblock Universitext. Springer, New York, 2011.

\bibitem[Cal06]{calderon2006inverse}
Alberto~P Calder{\'o}n.
\newblock On an inverse boundary value problem.
\newblock {\em Computational \& Applied Mathematics}, 25(2-3):133--138, 2006.

\bibitem[CK01]{canuto2001determining}
B.~Canuto and O.~Kavian.
\newblock Determining coefficients in a class of heat equations via boundary
  measurements.
\newblock {\em SIAM J. Math. Anal.}, 32(5):963--986, 2001.

\bibitem[CL19]{CL2019determining}
Xinlin Cao and Hongyu Liu.
\newblock Determining a fractional {H}elmholtz equation with unknown source and
  scattering potential.
\newblock {\em Commun. Math. Sci.}, 17(7):1861--1876, 2019.

\bibitem[CLL19]{CLL2017simultaneously}
Xinlin Cao, Yi-Hsuan Lin, and Hongyu Liu.
\newblock Simultaneously recovering potentials and embedded obstacles for
  anisotropic fractional {S}chr\"{o}dinger operators.
\newblock {\em Inverse Probl. Imaging}, 13(1):197--210, 2019.

\bibitem[CLR20]{cekic2020calderon}
Mihajlo Cekic, Yi-Hsuan Lin, and Angkana R{\"u}land.
\newblock The {C}alder{\'o}n problem for the fractional {S}chr{\"o}dinger
  equation with drift.
\newblock {\em Cal. Var. Partial Differential Equations}, 59(91), 2020.

\bibitem[CMR21]{CMR20}
Giovanni Covi, Keijo M\"{o}nkk\"{o}nen, and Jesse Railo.
\newblock Unique continuation property and {P}oincar\'{e} inequality for higher
  order fractional {L}aplacians with applications in inverse problems.
\newblock {\em Inverse Probl. Imaging}, 15(4):641--681, 2021.

\bibitem[CMRU22]{CMRU20}
Giovanni Covi, Keijo M\"{o}nkk\"{o}nen, Jesse Railo, and Gunther Uhlmann.
\newblock The higher order fractional {C}alder\'{o}n problem for linear local
  operators: {U}niqueness.
\newblock {\em Adv. Math.}, 399:Paper No. 108246, 2022.

\bibitem[Cov20]{C20}
Giovanni Covi.
\newblock Inverse problems for a fractional conductivity equation.
\newblock {\em Nonlinear Anal.}, 193:111418, 18, 2020.

\bibitem[CRTZ22]{StabilityFracCond}
Giovanni Covi, Jesse Railo, Teemu Tyni, and Philipp Zimmermann.
\newblock Stability estimates for the inverse fractional conductivity problem.
\newblock {\em arXiv:2210.01875}, 2022.

\bibitem[CRZ22]{CRZ2022global}
Giovanni Covi, Jesse Railo, and Philipp Zimmermann.
\newblock The global inverse fractional conductivity problem.
\newblock {\em arXiv:2204.04325}, 2022.

\bibitem[DGLZ12]{DGLZ12}
Qiang Du, Max Gunzburger, R.B. Lehoucq, and Kun Zhou.
\newblock Analysis and approximation of nonlocal diffusion problems with volume
  constraints.
\newblock {\em SIAM rev 54, No 4:667-696}, 2012.

\bibitem[DL92]{DautrayLionsVol5}
Robert Dautray and Jacques-Louis Lions.
\newblock {\em Mathematical analysis and numerical methods for science and
  technology. {V}ol. 5}.
\newblock Springer-Verlag, Berlin, 1992.
\newblock Evolution problems. I, With the collaboration of Michel Artola,
  Michel Cessenat and H\'{e}l\`ene Lanchon, Translated from the French by Alan
  Craig.

\bibitem[DROV17]{XavierNeumannBdry}
Serena Dipierro, Xavier Ros-Oton, and Enrico Valdinoci.
\newblock Nonlocal problems with {N}eumann boundary conditions.
\newblock {\em Rev. Mat. Iberoam.}, 33(2):377--416, 2017.

\bibitem[Eva10]{EvansPDE}
Lawrence~C. Evans.
\newblock {\em Partial differential equations}, volume~19 of {\em Graduate
  Studies in Mathematics}.
\newblock American Mathematical Society, Providence, RI, second edition, 2010.

\bibitem[Fei22]{AliParCald}
Ali Feizmohammadi.
\newblock An inverse boundary value problem for isotropic nonautonomous heat
  flows.
\newblock {\em arXiv:2203.13742}, 2022.

\bibitem[FGKU21]{feizmohammadi2021fractional}
Ali Feizmohammadi, Tuhin Ghosh, Katya Krupchyk, and Gunther Uhlmann.
\newblock Fractional anisotropic {C}alder\'on problem on closed {R}iemannian
  manifolds.
\newblock {\em arXiv:2112.03480}, 2021.

\bibitem[Gho21]{ghosh2021non}
Tuhin Ghosh.
\newblock A non-local inverse problem with boundary response.
\newblock {\em Rev. Mat. Iberoam.}, 2021.

\bibitem[GLX17]{GLX}
Tuhin Ghosh, Yi-Hsuan Lin, and Jingni Xiao.
\newblock The {C}alder\'{o}n problem for variable coefficients nonlocal
  elliptic operators.
\newblock {\em Comm. Partial Differential Equations}, 42(12):1923--1961, 2017.

\bibitem[GRSU20]{GRSU18}
Tuhin Ghosh, Angkana R\"{u}land, Mikko Salo, and Gunther Uhlmann.
\newblock Uniqueness and reconstruction for the fractional {C}alder\'{o}n
  problem with a single measurement.
\newblock {\em J. Funct. Anal.}, 279(1):108505, 42, 2020.

\bibitem[GSU20]{GSU20}
Tuhin Ghosh, Mikko Salo, and Gunther Uhlmann.
\newblock The {C}alder\'{o}n problem for the fractional {S}chr\"{o}dinger
  equation.
\newblock {\em Anal. PDE}, 13(2):455--475, 2020.

\bibitem[GU21]{GU2021calder}
Tuhin Ghosh and Gunther Uhlmann.
\newblock The {C}alder\'{o}n problem for nonlocal operators.
\newblock {\em arXiv:2110.09265}, 2021.

\bibitem[HL19]{harrach2017nonlocal-monotonicity}
Bastian Harrach and Yi-Hsuan Lin.
\newblock Monotonicity-based inversion of the fractional {S}chr\"{o}dinger
  equation {I}. {P}ositive potentials.
\newblock {\em SIAM J. Math. Anal.}, 51(4):3092--3111, 2019.

\bibitem[HL20]{harrach2020monotonicity}
Bastian Harrach and Yi-Hsuan Lin.
\newblock Monotonicity-based inversion of the fractional {S}ch\"{o}dinger
  equation {II}. {G}eneral potentials and stability.
\newblock {\em SIAM J. Math. Anal.}, 52(1):402--436, 2020.

\bibitem[KLW22]{KLW2021calder}
Pu-Zhao Kow, Yi-Hsuan Lin, and Jenn-Nan Wang.
\newblock The {C}alder\'{o}n problem for the fractional wave equation:
  uniqueness and optimal stability.
\newblock {\em SIAM J. Math. Anal.}, 54(3):3379--3419, 2022.

\bibitem[KOSY18]{GlobalTimeFrac}
Y.~Kian, L.~Oksanen, E.~Soccorsi, and M.~Yamamoto.
\newblock Global uniqueness in an inverse problem for time fractional diffusion
  equations.
\newblock {\em J. Differential Equations}, 264(2):1146--1170, 2018.

\bibitem[KRZ22]{KRZ2022Biharm}
Manas Kar, Jesse Railo, and Philipp Zimmermann.
\newblock The fractional $p$-biharmonic systems: optimal {P}oincar\'e
  constants, unique continuation and inverse problems.
\newblock {\em arXiv:2208.09528}, 2022.

\bibitem[KSY18]{Time-Frac-Diff}
Yavar Kian, Eric Soccorsi, and Masahiro Yamamoto.
\newblock On time-fractional diffusion equations with space-dependent variable
  order.
\newblock {\em Ann. Henri Poincar\'{e}}, 19(12):3855--3881, 2018.

\bibitem[Lad85]{LadyzhenskajaBVP}
O.~A. Ladyzhenskaya.
\newblock {\em The boundary value problems of mathematical physics}, volume~49
  of {\em Applied Mathematical Sciences}.
\newblock Springer-Verlag, New York, 1985.
\newblock Translated from the Russian by Jack Lohwater [Arthur J. Lohwater].

\bibitem[Lin22]{lin2020monotonicity}
Yi-Hsuan Lin.
\newblock Monotonicity-based inversion of fractional semilinear elliptic
  equations with power type nonlinearities.
\newblock {\em Calc. Var. Partial Differential Equations}, 61(5):Paper No. 188,
  30, 2022.

\bibitem[LL22a]{LL2020inverse}
Ru-Yu Lai and Yi-Hsuan Lin.
\newblock Inverse problems for fractional semilinear elliptic equations.
\newblock {\em Nonlinear Anal.}, 216:Paper No. 112699, 21, 2022.

\bibitem[LL22b]{LL2022inverse}
Yi-Hsuan Lin and Hongyu Liu.
\newblock Inverse problems for fractional equations with a minimal number of
  measurements.
\newblock {\em arXiv:2203.03010}, 2022.

\bibitem[LLR20]{LLR2019calder}
Ru-Yu Lai, Yi-Hsuan Lin, and Angkana R\"{u}land.
\newblock The {C}alder\'{o}n problem for a space-time fractional parabolic
  equation.
\newblock {\em SIAM J. Math. Anal.}, 52(3):2655--2688, 2020.

\bibitem[LLU22]{LLU2022calder}
Ching-Lung Lin, Yi-Hsuan Lin, and Gunther Uhlmann.
\newblock The {C}alder\'on problem for nonlocal parabolic operators.
\newblock {\em arXiv:2209.11157}, 2022.

\bibitem[LSU88]{LadyzhenskajaParabolic}
Ol'ga~Aleksandrovna Lady{\v{z}}enskaja, Vsevolod~Alekseevich Solonnikov, and
  Nina~N Ural'tseva.
\newblock {\em Linear and quasi-linear equations of parabolic type}, volume~23.
\newblock American Mathematical Soc., 1988.

\bibitem[Nac96]{Nachman1996GlobalUniqueness}
Adrian~I. Nachman.
\newblock Global uniqueness for a two-dimensional inverse boundary value
  problem.
\newblock {\em Ann. of Math. (2)}, 143(1):71--96, 1996.

\bibitem[RO16]{ros2016nonlocal}
Xavier Ros-Oton.
\newblock Nonlocal elliptic equations in bounded domains: a survey.
\newblock {\em Publicacions matematiques}, pages 3--26, 2016.

\bibitem[RS18]{ruland2018exponential}
Angkana R{\"u}land and Mikko Salo.
\newblock Exponential instability in the fractional {C}alder{\'o}n problem.
\newblock {\em Inverse Problems}, 34(4):045003, 2018.

\bibitem[RS20a]{RS17}
Angkana R\"{u}land and Mikko Salo.
\newblock The fractional {C}alder\'{o}n problem: low regularity and stability.
\newblock {\em Nonlinear Anal.}, 193:111529, 56, 2020.

\bibitem[RS20b]{RS-fractional-calderon-low-regularity-stability}
Angkana R\"{u}land and Mikko Salo.
\newblock The fractional {C}alder\'{o}n problem: low regularity and stability.
\newblock {\em Nonlinear Anal.}, 193:111529, 56, 2020.

\bibitem[RZ22a]{RZ2022FracCondCounter}
Jesse Railo and Philipp Zimmermann.
\newblock Counterexamples to uniqueness in the inverse fractional conductivity
  problem with partial data.
\newblock {\em Inverse Probl. Imaging (to appear)}, 2022.
\newblock arXiv:2203.02442.

\bibitem[RZ22b]{RZ2022unboundedFracCald}
Jesse Railo and Philipp Zimmermann.
\newblock Fractional {C}alderón problems and {P}oincaré inequalities on
  unbounded domains.
\newblock {\em J. Spectr. Theory (to appear)}, 2022.
\newblock arXiv:2203.02425.

\bibitem[RZ22c]{RZ2022LowReg}
Jesse Railo and Philipp Zimmermann.
\newblock Low regularity theory for the inverse fractional conductivity
  problem.
\newblock {\em arXiv:2208.11465}, 2022.

\bibitem[SU87]{SU87}
John Sylvester and Gunther Uhlmann.
\newblock A global uniqueness theorem for an inverse boundary value problem.
\newblock {\em Ann. of Math. (2)}, 125(1):153--169, 1987.

\end{thebibliography}

\bibliographystyle{alpha}

\end{document}